\begin{document}
\tikzset{mark size=1.5}

\title{The Ap\'{e}ry Set of a Good Semigroup}
\author{Marco D'Anna, Lorenzo Guerrieri, Vincenzo Micale 
\thanks{Universit\`a di Catania, Dipartimento di Matematica e Informatica, Viale A. Doria, 6, 95125 Catania, Italy \textbf{Email addresses:} mdanna@dmi.unict.it, guelor@guelan.com, vmicale@dmi.unict.it }
}

\maketitle

%\footnotemark[3]
%\thanks{Universit\`a di Catania, Dipartimento di Matematica e Informatica, Viale A. Doria, 6, 95125 Catania, Italy.   \textbf{Email address:} guerrieri@dmi.unict.it  } { }

\begin{abstract}

\noindent We study the Ap\'ery Set of good subsemigoups of $\mathbb N^2$, a class of semigroups containing the value semigroups 
of curve singularities with two branches. Even if this set in infinite, we show that, for the Ap\'ery Set of such semigroups, we can define a partition in "levels"
that allows to generalize many properties of the Ap\'ery Set of numerical semigroups, i.e. value semigroups of one-branch singularities.
\medskip

\noindent MSC:  13A18, 14H99, 13H99, 20M25.\\
\noindent Keywords: value semigroups, algebroid curves, Gorenstein rings, symmetric semigroups, Ap\'{e}ry Set.
\end{abstract}

%\begin{document}

\newtheorem{theorem}{Theorem}[section]
\newtheorem{lemma}[theorem]{Lemma}
\newtheorem{prop}[theorem]{Proposition}
\newtheorem{corollary}[theorem]{Corollary}
\newtheorem{problem}[theorem]{Problem}
\newtheorem{construction}[theorem]{Construction}

\theoremstyle{definition}
\newtheorem{defi}[theorem]{Definitions}
\newtheorem{definition}[theorem]{Definition}
\newtheorem{remark}[theorem]{Remark}
\newtheorem{example}[theorem]{Example}
\newtheorem{question}[theorem]{Question}
\newtheorem{comments}[theorem]{Comments}

\newtheorem{discussion}[theorem]{Discussion}

\newcommand{\N}{\mathbb{N}}
\newcommand{\Z}{\mathbb{Z}}
\newcommand{\al}{\boldsymbol{\alpha}}
\newcommand{\be}{\boldsymbol{\beta}}
\newcommand{\de}{\boldsymbol{\delta}}
\newcommand{\e}{\boldsymbol{e}}
\newcommand{\om}{\boldsymbol{\omega}}
\newcommand{\g}{\boldsymbol{\gamma}}
\newcommand{\te}{\boldsymbol{\theta}}
\newcommand{\he}{\boldsymbol{\eta}}
\newcommand{\C}{\boldsymbol{c}}
\newcommand{\K}{\mathrm{K}}
\newcommand{\Ap}{\mathrm{\bf Ap}}
\def\min{\mbox{\rm min}}
\def\max{\mbox{\rm max}}
\def\ff{\frak}
\def\Spec{\mbox{\rm Spec }}
\def\Proj{\mbox{\rm Proj }}
\def\hgt{\mbox{\rm ht }}
\def\type{\mbox{ type}}
\def\Hom{\mbox{ Hom}}
\def\rank{\mbox{ rank}}
\def\Ext{\mbox{ Ext}}
\def\Ker{\mbox{ Ker}}
\def\Max{\mbox{\rm Max}}
\def\End{\mbox{\rm End}}
\def\xpd{\mbox{\rm xpd}}
\def\Ass{\mbox{\rm Ass}}
\def\emdim{\mbox{\rm emdim}}
\def\epd{\mbox{\rm epd}}
\def\repd{\mbox{\rm rpd}}
\def\ord{\mbox{\rm ord}}
\def\Tr{\mbox{\rm Tr}}
\def\Res{\mbox{\rm Res}}
\def\sdefect{\mbox{\rm sdefect}}
\maketitle

\section{Introduction}

The concept of  \emph{good semigroup} was formally defined in \cite{a-u} in order to study value semigroups of Noetherian analytically unramified 
one-dimensional semilocal reduced rings, e.g. the local rings arising from curve singularities (and from their blowups), possibly with more than one branch; the properties of these semigroups were already considered in \cite{two, c-d-gz, C-D-K, danna1, felix, delgado, garcia}, but it was in \cite{a-u} that their structure was systematically studied.
Similarly to the one branch case, when the value semigroup is a numerical semigroup, the properties of the rings can be translated and studied at
semigroup level. For example, the celebrated result by Kunz (see \cite{kunz}) that a one-dimensional analytically irreducible local domain is Gorenstein if and only 
if its value semigroup is symmetric can be generalized to analytically unramified rings (see \cite{delgado} and also \cite{C-D-K}); in the same way the numerical characterization of the canonical module in the analytically irreducible case (see \cite{jager}) can be given also in the more general case (see \cite{danna1}).

However good semigroups present some problems that make difficult their study; first of all they are not finitely generated as monoid (even if they
can be completely determined by a finite set of elements (see \cite{garcia}, \cite{C-H} and \cite{D-G-M-T}) and they are not closed under finite intersections. 
Secondly, the behavior of the good ideals of good semigroups (e.g. the ideals
arising as values of ideals of the corresponding ring) is not good at all, in the sense that the class of good ideals is not closed under 
sums and differences (see e.g. \cite{a-u} and \cite{KST}).

Hence, unlike what happens for numerical semigroups (in analogy to analytically irreducible domains), it is not clear how to define the concept of complete intersection 
good semigroups and also the concepts of embedding dimension and type for these semigroups.

Moreover, in the same paper \cite{a-u}, it is shown that the class of good semigroups is larger than the class of value semigroups and, at the moment, 
no characterization of value good semigroups is known (while, for the numerical semigroup case, it is easily seen that any such semigroup is the value 
semigroup of the ring of the corresponding monomial curve).
This means that to prove a property for good semigroups it is not possible to take advantage of the 
nature of value semigroups and it is necessary to work only at semigroup level.

Despite this bad facts, there is a concept quite natural to define, that seems very promising in order to study good semigroups and
to translate at numerical level other ring concepts: the \emph{Ap\'ery Set}.
In general, given any monoid $S$ and any element $s \in S$, the $\Ap(S,s)$ is defined as the set
$\{t \in S : \  t-s \notin S\}$ (where the $-$ is taken in the group generated by $S$).
For studying numerical semigroups, this concept reveals to be very useful and it is also a bridge 
between semigroup and ring properties, since many important ring properties are stable
under quotients with respect to principal ideals generated by a nonzero divisor $(x)$ and 
the values of the nonzero elements in $R/(x)$ are exactly the elements of $\Ap(S,v(x))$. 
This strategy was used, e.g., in \cite{bryant}, \cite{co-ja-za}, \cite{da-mi-sa}, \cite{da-ja-st} 
taking $(x)$ to be a minimal reduction of the maximal ideal
that, in this situation, correspond to an element of minimal nonzero value.

For good semigroups, the notion of $\Ap(S,s)$ was used in \cite{apery}, in order to obtain an algorithmic 
characterization of those good semigroups that are value semigroups of a plane singularity with two branches.
In that paper, using deeply the structure of the rings associated to plane singularities, it is proved that 
$\Ap(S, \boldsymbol{e}$) (where $\boldsymbol{e}=(e_1,e_2)$ is the minimal nonzero element in $S \subset \mathbb N^2$)
can be divided in $e=e_1+e_2$ subsets, where the integer $e$ corresponds to the multiplicity of the ring.

In this paper we want to investigate the Ap\'ery Set of a good semigroup. 
Again the problem is that we have to deal
with an infinite set; so, first of all, we want to understand if there is a natural partition of it in $e$ subsets, where $e$ is the sum of the components of 
the minimal nonzero element of $S$ and, in case $S$ is a value semigroup, it represents also the multiplicity of the corresponding ring; 
to answer to this question we decided to restrict to the good subsemigroups of $\mathbb N^2$,
otherwise the technicalities would increase too much.
After finding a possible partition $\Ap(S)=\bigcup_{i=1}^e A_{i}$, we prove that, if $S$ is the value semigroup of a ring $(R, \mathfrak m, k)$, it is possible to 
choose $e$ elements 
$\boldsymbol{\alpha}_i$ in the Ap\'ery Set, one for each $A_i$, so that, taking any element $f_i \in R$ of valuation $v(f_i)=\boldsymbol{\alpha}_i$,
the classes $\bar f_i$ are a basis of the $e$-dimensional $k$-vector space $R/(x)$ (where $x$ is a minimal reduction of $\mathfrak m$).
This fact make us confident that the definition of the partition is the one we where looking for.

At this point it is natural to investigate if it is possible to generalize the well known characterization of symmetric numerical semigroups given via their Ap\'ery Set.
It turns out that also good symmetric semigroups have $\Ap(S,\boldsymbol{e}$) whose partition satisfies a duality property
similar to the duality that holds for the numerical case.

The structure of the paper is the following: after recalling in Section 2 all the preliminary definitions and results needed for the rest of the paper, in Section 3
we define the partition of the Ap\'ery Set of $S$ and we prove that this partition produces exactly $e_1+e_2$ levels (Theorem \ref{livelli}); successively we deepen the 
study of the structure of Ap\'ery Set (Theorem \ref{infiniti}) and we prove that, in the case of value semigroups, the partition allows to find a basis 
for $R/(x)$ as explained above (Theorem \ref{anelloquoziente}).

In Section 4 we study the properties of the Ap\'ery Set of symmetric good semigroups with particular attention to duality properties of its elements (Proposition \ref{simmetria}
and Theorem \ref{proiezione}) and in Section 5 we use these results to prove a duality for the levels, characterizing the symmetric semigroups, in analogy to the duality of Ap\'ery Set in the numerical semigroup case
(Theorem \ref{duality}). Finally we deepen this duality showing that we can find sequences of elements, one for each level, that have the same duality properties 
of the Ap\'ery Set of the numerical semigroups (Theorem \ref{sequenza2}).

\section{Preliminaries}

Let $\mathbb N$ be the set of nonnegative integers. As usual, $\le$ stands for the natural partial ordering in $\mathbb N^2$: set $\al=(\alpha_1, \alpha_2), \be=(\beta_1, \beta_2)$, then $\boldsymbol{\alpha}\le \boldsymbol{\beta}$ if $\alpha_1\le \beta_1$ and $\alpha_2\le \beta_2$. Trough this paper, if not differently specified, when referring to minimal or maximal elements of a subset of $\mathbb N^2$, we refer to minimal or maximal elements with respect to $\le$.
Given $\boldsymbol{\alpha},\boldsymbol{\beta}\in \mathbb N^2$, the infimum of the set $\{\boldsymbol{\alpha},\boldsymbol{\beta}\}$ (with respect to $\le$) will be denoted by $\boldsymbol{\alpha}\wedge \boldsymbol{\beta}$. Hence $$\boldsymbol{\alpha}\wedge \boldsymbol{\beta}=(\min(\alpha_1,\beta_1),\min(\alpha_2,\beta_2)).$$

Let $S$ be a submonoid of $(\mathbb N^2,+)$. We say that $S$ is a \emph{good semigroup} if

\begin{itemize}
	\item[(G1)] for all $\boldsymbol{\alpha},\boldsymbol{\beta}\in S$, $\boldsymbol{\alpha}\wedge \boldsymbol{\beta}\in S$;
	\item[(G2)] if $\boldsymbol{\alpha},\boldsymbol{\beta}\in S$ and $\alpha_i=\beta_i$ for some $i\in \{1,2\}$, then there exists $\boldsymbol{\delta}\in S$ such that $\delta_i>\alpha_i=\beta_i$, $\delta_j\ge\min\{\alpha_j,\beta_j \}$ for $j\in\{1,2\}\setminus\{i\}$ and $\delta_j=\min\{\alpha_j,\beta_j \}$ if $\alpha_j\ne \beta_j$;
	\item[(G3)] there exists $\boldsymbol{c}\in S$ such that $\boldsymbol{c}+\mathbb N^2\subseteq S$.
\end{itemize}

A good subsemigroup of $\mathbb N^2$ is said to be \emph{local} if $\boldsymbol{0}=(0,0)$ is its
only element with a zero component. In the following we will work only with local good semigroups 
hence we will omit the word local.

Notice that, from condition (G1), if $\boldsymbol{c}$ and $\boldsymbol{d}$ fulfill (G3), then so does $\boldsymbol{c}\wedge \boldsymbol{d}$. So there exists a minimum $\boldsymbol{c}\in \mathbb N^2$ for which condition (G3) holds. Therefore we will say that
\[
\boldsymbol{c}:=\min\{\boldsymbol{\alpha}\in \mathbb Z^2\mid \boldsymbol{\alpha}+\mathbb N^2\subseteq S\}
\]
is the \emph{conductor} of $S$.
We denote $\boldsymbol{\gamma}:=\boldsymbol{c}-\textbf{1}$.

In light of \cite[Proposition 2.1]{a-u}, value semigroups of Noetherian, analytically unramified, residually rational, one-dimensional, reduced semilocal rings with two minimal primes are good subsemigroups of $\mathbb N^2$ and $R$ is local if and only it its value semigroup is local; in the rest of this paper we will always assume these hypotheses on the rings $R$ unless differently stated. 

We give the following technical definitions that are commonly used in the literature about good semigroups:
\begin{itemize}
	\item[(1)] $\Delta_i (\boldsymbol{\alpha}):=\{\boldsymbol{\beta}\in \mathbb Z^2 \mid \alpha_i=\beta_i \textup{ and } \alpha_j<\beta_j \textup{ for } j\ne i\}$,
	\item[(2)] $\Delta_i^S (\boldsymbol{\alpha}) := \Delta_i (\boldsymbol{\alpha}) \cap S$,
	\item[(3)] $\Delta(\boldsymbol{\alpha}):=\Delta_1 (\boldsymbol{\alpha})\cup\Delta_2 (\boldsymbol{\alpha})$,
	\item[(4)] $\Delta^S (\boldsymbol{\alpha}) := \Delta (\boldsymbol{\alpha}) \cap S$.
\end{itemize}

An element $\al \in S$ is said to be \it absolute \rm if $ \Delta^S(\al) = \emptyset$.
By definition of conductor we immediately get $ \Delta^S(\g) = \emptyset$. Given $\boldsymbol{\alpha}, \boldsymbol{\beta}\in\mathbb N^2$, we say that $\boldsymbol{\beta}$ is \emph{above} $\boldsymbol{\alpha}$ if $\boldsymbol{\beta}\in\Delta_1 (\boldsymbol{\alpha})$ and that $\boldsymbol{\beta}$ is \emph{on the right} of $\boldsymbol{\alpha}$ if $\boldsymbol{\beta}\in\Delta_2 (\boldsymbol{\alpha})$. 

\begin{remark}
	Let $\boldsymbol{c}=(c_1,c_2)$ be the conductor of $S$. By properties (G1) and (G2), we have that if $\boldsymbol{\alpha}=(\alpha_1,c_2)\in S$, then $\Delta_1 (\boldsymbol{\alpha})=\Delta_1^S (\boldsymbol{\alpha})$ (that is, each point $\boldsymbol{\beta}\in\mathbb N^2$, above $\boldsymbol{\alpha}$, is in $S$). Similarly, if $\boldsymbol{\alpha}=(c_1,\alpha_2)\in S$, then $\Delta_2 (\boldsymbol{\alpha})=\Delta_2^S (\boldsymbol{\alpha})$ (that is, each point $\boldsymbol{\beta}\in\mathbb N^2$, on the right of $\boldsymbol{\alpha}$, is in $S$).
\end{remark}

\bigskip

The Ap\'{e}ry Set of $S$ with respect to $\boldsymbol{\beta} \in S$ is defined as the set 
$$\Ap(S, \be)=\left\{\boldsymbol{\alpha}\in S| \boldsymbol{\alpha}-\be \notin S\right\}.$$
Property (G1) implies that for a local good semigroup there exists a smallest non zero element
that we will denote by $\boldsymbol{e}=(e_1,e_2)$. 
We will usually consider the Ap\'{e}ry Set of $S$ with respect to $\boldsymbol{e}$ and, in this case, we will simply write
$\Ap(S)$.

\begin{figure}[h]  
  
  \begin{tikzpicture}[scale=1.2]
	\begin{axis}[grid=major, ytick={0,3,4,5,6,7,8, 9, 10, 11}, xtick={0,2,3,4,5,6}, yticklabel style={font=\tiny}, xticklabel style={font=\tiny}]
\addplot[only marks] coordinates{ (2,3) (4,6) (4,7) (5,6) (5,9) (5,10) (5,11) (5,12) (5,13) (5,14)  
(6,6)   (7,6)   (8,6) (6,9) (6,10)(6,11)  
(7,9) (7,10) (8,9) (8,10) (7,12) (7,13) (7,14) (8,12) (8,13) (8,14)  }; 
 %\addplot[only marks, mark=diamond] coordinates{(0,0)}; %1
 \addplot[only marks, mark=o] coordinates{ (0,0) (2,4) (3,3) %2
  (3,6) (3,7) (3,8) (4,3) (5,3) (6,3) (7,3) (8,3)  
 (5,7) (6,7) (7,7) (8,7) (4,8) (3,9) (3,10) (3,11) (3,12) (3,13) (3,14)  %4
 (6,12) (6,13) (6,14) (7,11) (8,11)  }; %5
 \end{axis}
  \end{tikzpicture}
  \caption{ \scriptsize The value semigroup of $ k[[X,Y,Z]]/ (X^3-Z^2) \cap (X^3-Y^4) $. The elements of the Ap\'{e}ry Set are indicated with $\circ$.}
\label{semtalk}
\end{figure}
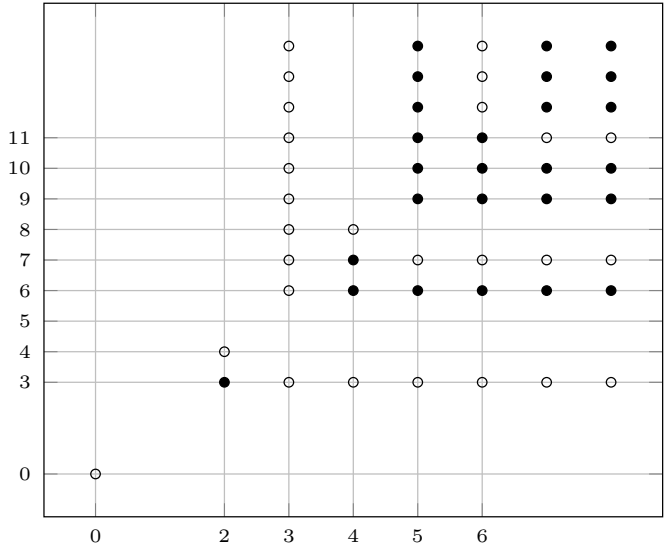

\begin{remark}
	By definitions of conductor of $S$ and of $\Ap(S)$, we have 
	$$
	\{\boldsymbol{\alpha}\in\mathbb N^2\ |\ \boldsymbol{\alpha}\ge\boldsymbol{\gamma}+\boldsymbol{e}+\boldsymbol{1}\}\cap\Ap(S)=\emptyset.
	$$
\end{remark}

Let $A$ be a subset of $\mathbb N^2$ and let $\boldsymbol{\alpha}, \boldsymbol{\beta}\in A$, we say that $\boldsymbol{\beta}$ is a consecutive point of $\boldsymbol{\alpha}$ in $A$ if, for every $\boldsymbol{\mu}\in\mathbb N^2$ with $\boldsymbol{\alpha}<\boldsymbol{\mu}<\boldsymbol{\beta}$, we necessary have $\boldsymbol{\mu}\notin A$. 

\begin{remark} Let $\al, \be \in \Ap(S)$.
	If $\boldsymbol{\beta}$ is a consecutive point of $\boldsymbol{\alpha}$ in $S$, then $\boldsymbol{\beta}$ is a consecutive point of $\boldsymbol{\alpha}$ in $\Ap(S)$. The converse it is not true.
\end{remark}

Let $\boldsymbol{\alpha}, \boldsymbol{\beta}\in A$. The chain of points in $A$, $\{\boldsymbol{\alpha}=\boldsymbol{\alpha}_1<\cdots<\boldsymbol{\alpha}_h<\cdots<\boldsymbol{\alpha}_n=\boldsymbol{\beta}\}$, with $\boldsymbol{\alpha}_{h+1}$ consecutive of $\boldsymbol{\alpha}_h$, is called \emph{saturated} of length $n$. In this case $\boldsymbol{\alpha}$ and $\boldsymbol{\beta}$ are called the initial point and the final point, respectively, of the chain.

A \emph{relative ideal} of a good semigroup $S$ is a subset $\emptyset\ne E \subseteq \mathbb Z^2$ such that $E + S \subseteq E$ and $\boldsymbol{\alpha}+E\subseteq S$ for some $\boldsymbol{\alpha}\in S$. A relative ideal $E$ contained in $S$ is simply called an \it ideal\rm.
If $E$ satisfies (G1) and (G2), then we say that $E$ is a \emph{good ideal} of $S$ (condition (G3) follows from the definition of good relative ideal).

\begin{prop} \rm (\cite{danna1} and \cite{KST}) \it
	All the saturated chains in a good ideal $E$ of $S$ with fixed initial and final points have the same length. 
\end{prop}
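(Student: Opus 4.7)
Plan. I would argue by induction on $N(\boldsymbol{\alpha},\boldsymbol{\beta}):=(\beta_1-\alpha_1)+(\beta_2-\alpha_2)$, a nonnegative integer since $\boldsymbol{\alpha}\le\boldsymbol{\beta}$ lie in $E\subseteq\mathbb N^2$. The base cases $N\le 1$ are immediate. For the inductive step, fix two saturated chains
$C:\ \boldsymbol{\alpha}=\boldsymbol{\alpha}_0<\boldsymbol{\alpha}_1<\cdots<\boldsymbol{\alpha}_n=\boldsymbol{\beta}$ and
$C':\ \boldsymbol{\alpha}=\boldsymbol{\alpha}'_0<\boldsymbol{\alpha}'_1<\cdots<\boldsymbol{\alpha}'_m=\boldsymbol{\beta}$ in $E$, and compare their second elements. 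If $\boldsymbol{\alpha}_1=\boldsymbol{\alpha}'_1$, the inductive hypothesis on $[\boldsymbol{\alpha}_1,\boldsymbol{\beta}]$ (which has strictly smaller $N$) gives $n-1=m-1$. Otherwise, property (G1) places $\boldsymbol{\alpha}_1\wedge\boldsymbol{\alpha}'_1$ in $E$ and below each $\boldsymbol{\alpha}_i$; consecutiveness of $\boldsymbol{\alpha}_1$ and $\boldsymbol{\alpha}'_1$ to $\boldsymbol{\alpha}$ in $E$ forces this infimum to be $\boldsymbol{\alpha}$ itself, so up to swapping, $\boldsymbol{\alpha}_1\in\Delta_1(\boldsymbol{\alpha})$ and $\boldsymbol{\alpha}'_1\in\Delta_2(\boldsymbol{\alpha})$.

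The heart of the argument is then to close a diamond above $\boldsymbol{\alpha}_1$ and $\boldsymbol{\alpha}'_1$: produce $\boldsymbol{\zeta}\in E$ with $\boldsymbol{\zeta}\le\boldsymbol{\beta}$ reachable from each of $\boldsymbol{\alpha}_1$ and $\boldsymbol{\alpha}'_1$ by saturated chains of equal (and controllable) length. Applying (G2) to $(\boldsymbol{\alpha},\boldsymbol{\alpha}_1)$ (sharing their first coordinate) produces $(\delta_1,\alpha_2)\in E$ with $\delta_1>\alpha_1$; symmetrically, (G2) on $(\boldsymbol{\alpha},\boldsymbol{\alpha}'_1)$ provides a partner element above $\boldsymbol{\alpha}$ in $\Delta_1$, and taking successive infima with the prescribed coordinates of $\boldsymbol{\alpha}_1$ and $\boldsymbol{\alpha}'_1$ via (G1) builds the bridging $\boldsymbol{\zeta}$. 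Once $\boldsymbol{\zeta}$ is in hand, refinements of $C$ and $C'$ pass through $\boldsymbol{\zeta}$, and invoking the inductive hypothesis on the intervals $[\boldsymbol{\alpha}_1,\boldsymbol{\beta}]$ and $[\boldsymbol{\alpha}'_1,\boldsymbol{\beta}]$ (each of strictly smaller $N$) forces $n=m$.

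The main obstacle is precisely the diamond construction: combining (G1) and (G2) so that $\boldsymbol{\zeta}$ stays inside the box $[\boldsymbol{\alpha},\boldsymbol{\beta}]$ and the two bridging subchains $\boldsymbol{\alpha}_1\to\boldsymbol{\zeta}$ and $\boldsymbol{\alpha}'_1\to\boldsymbol{\zeta}$ have matching length. I would isolate this as a separate lemma -- in essence a Jordan--Dedekind property for the finite poset $E\cap[\boldsymbol{\alpha},\boldsymbol{\beta}]$ -- and prove it by a secondary induction on $N$, carefully selecting the element produced by (G2) of smallest excess in the free coordinate so that the resulting refinement still lies in the original interval. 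This is the only serious technical point in the proof; the rest is bookkeeping via the induction on $N$.
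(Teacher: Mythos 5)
Your reduction is fine as far as it goes: comparing the second elements of the two chains, using (G1) and consecutiveness to conclude $\boldsymbol{\alpha}_1\wedge\boldsymbol{\alpha}'_1=\boldsymbol{\alpha}$, hence (up to swapping) $\boldsymbol{\alpha}_1\in\Delta_1(\boldsymbol{\alpha})$, $\boldsymbol{\alpha}'_1\in\Delta_2(\boldsymbol{\alpha})$, and then invoking the induction on the smaller intervals once a bridging element $\boldsymbol{\zeta}$ is available, is the standard Jordan--Dedekind skeleton. But the proposal has a genuine gap exactly at the point you yourself call the heart of the argument: the existence of $\boldsymbol{\zeta}\in E$ with $\boldsymbol{\alpha}_1,\boldsymbol{\alpha}'_1\le\boldsymbol{\zeta}\le\boldsymbol{\beta}$ \emph{together with saturated chains in $E$ from $\boldsymbol{\alpha}_1$ and from $\boldsymbol{\alpha}'_1$ to $\boldsymbol{\zeta}$ of equal length}. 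The mechanism you sketch cannot produce such a $\boldsymbol{\zeta}$: (G2) applied to $(\boldsymbol{\alpha},\boldsymbol{\alpha}_1)$ gives an element of $\Delta_2(\boldsymbol{\alpha})$, i.e.\ at the same height as $\boldsymbol{\alpha}$, and taking infima via (G1) only yields elements \emph{below} those you already have, never a common \emph{upper} bound of $\boldsymbol{\alpha}_1$ and $\boldsymbol{\alpha}'_1$. (Staying inside the box is the easy part: any common upper bound can be met with $\boldsymbol{\beta}$ by (G1), and a minimal one exists for the same reason; the difficulty is not existence of $\boldsymbol{\zeta}$ but the length condition.)

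Moreover, the equality of the lengths of the two bridging chains is itself a statement about lengths of saturated chains in $E$, of exactly the same kind as the proposition being proved; deferring it to ``a Jordan--Dedekind property for the finite poset $E\cap[\boldsymbol{\alpha},\boldsymbol{\beta}]$, proved by a secondary induction on $N$'' is circular unless the construction is actually exhibited. Since good ideals are closed under $\wedge$ but not under coordinatewise suprema, you cannot take $\boldsymbol{\zeta}=\boldsymbol{\alpha}_1\vee\boldsymbol{\alpha}'_1$ as in the semimodular-lattice argument; one must climb with repeated, carefully chosen applications of (G2) along the horizontal line through $\boldsymbol{\alpha}'_1$ and the vertical line through $\boldsymbol{\alpha}_1$, and show the two ladders can be matched step by step. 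That case analysis is the entire content of the result; it is what \cite{danna1} and \cite{KST} carry out, and the present paper does not reprove it but only cites them, so your outline, as it stands, does not constitute a proof.
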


If $\boldsymbol{\alpha},\boldsymbol{\beta}\in E$, we denote by
$d_E(\boldsymbol{\alpha},\boldsymbol{\beta})$ the common length of all the saturated chains in the good ideal
$E$ with initial point $\al$ and final point $\be$. %are, respectively, the initial and the final points of a chain, we denote the common length of the last proposition with $d_E(\boldsymbol{\alpha},\boldsymbol{\beta})$. 
Moreover, if $E \supseteq F$ are two good ideals, considering 
$\boldsymbol m_E$ and $\boldsymbol m_F$ the minimal elements, respectively, of $E$ and $F$ and taking $\al \geq \boldsymbol{c}_F$ where $\boldsymbol{c}_F$ is the 
conductor of $F$, it is possible to define the following distance function $d(E\setminus F)=d_E(\boldsymbol m_E, \al)-d_F(\boldsymbol m_F, \al)$
(cf \cite{danna1} and \cite{KST} to see that it is a well defined distance). This distance funtion allows to translate many ring properties at semigroup level,
since, if $I \supseteq J$ are two fractional ideals of $R$, it is proved, in the same papers, that the length $\lambda_R(I/J)$ equals 
$d_{v(R)}(v(I) \setminus v(J))$.

\begin{remark}
\label{remnumerical}
In the numerical semigroup case, that is $S=v(R)$ with $(R,\mathfrak{m})$ a one-dimensional, Noetherian, analitically irreducible, residually rational, local domain, it is well known that, if we denote by $x$ a minimal reduction of $\mathfrak{m}$ (i.e. an element of minimal value $e$), then $\overline{y}\ne 0$ in $R/(x)$ if and only if $v(y)\in\Ap(S)=\left\{\omega_0,\dots,\omega_{e-1}\right\}$. Moreover, if $\omega_i=v(y_i)$, then $\left\{\overline{y}_0,\dots \overline{y}_{e-1}\right\}$ are linear independent in the $R/\mathfrak{m}$-vector space $R/(x)$ and so they form a basis for it. 
\end{remark}

The first part of this remark can be easily generalized as follows, while we will be able to generalize the second part using all the results of the next section 
(see Theorem \ref{anelloquoziente}).

\begin{prop}
\label{prop1}
 Let $y \in R$; then
	$\overline{y}\ne 0$ in $R/(x)$ if and only if $v(y)\in\Ap(S)$.
\end{prop}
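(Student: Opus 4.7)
I would prove the equivalence by handling each direction separately; the forward implication $\bar y = 0 \Rightarrow v(y) \notin \Ap(S)$ is immediate from the definitions. Indeed, if $y = xz$ with $y \ne 0$, then $z \ne 0$, and $v(y) = v(x) + v(z) = \boldsymbol{e} + v(z) \in \boldsymbol{e} + S$, so $v(y) - \boldsymbol{e} \in S$ and hence $v(y) \notin \Ap(S)$.

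For the converse, I would assume $v(y) \notin \Ap(S)$ (with $y \ne 0$ a nonzerodivisor, so that $v(y) \in S$) and aim at $y \in (x)$. Since $v(y) - \boldsymbol{e} \in S$, I can pick $z_1 \in R$ with $v(z_1) = v(y) - \boldsymbol{e}$, giving $v(xz_1) = v(y)$. The residually rational hypothesis means that for each of the two branches the leading coefficients of $y$ and of $xz_1$ are well-defined elements of $k^{\ast}$, so I can choose a scalar $\lambda \in k$ such that $v(y - \lambda xz_1)$ is strictly larger than $v(y)$ in at least one coordinate: in both coordinates when the leading-coefficient ratios in the two branches happen to agree, and in a single branch otherwise.

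Replacing $y$ by $y' := y - \lambda xz_1$ (which does not change $\bar y$) and iterating, the value of the running remainder strictly increases at each step in at least one component. Interleaving which branch is cancelled when the ratios disagree, both components of the value eventually exceed $\boldsymbol{c} + \boldsymbol{e}$; at that point a Krull intersection / completeness argument (exploiting that $x$ is a nonzerodivisor) would yield $y \in (x)$, i.e.\ $\bar y = 0$. The hard part, absent in the one-branch numerical case of Remark~\ref{remnumerical}, is precisely the two-branch situation where the leading-coefficient ratios disagree: no single scalar cancels both leading terms at once, so the cancellation must be interleaved; axiom (G2) of the good semigroup structure should ensure that the intermediate remainders still admit further cancellation witnesses $z_n$ needed to continue the process.
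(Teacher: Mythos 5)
Your first paragraph (the implication $v(y)\in\Ap(S)\Rightarrow\overline{y}\neq 0$, stated contrapositively) is correct and is essentially the paper's argument. For the converse, your cancellation scheme is also the paper's (there one works with $yx^{-1}$ and subtracts unit multiples $ur$ of elements $r\in R$ with $v(r)=v(yx^{-1})$, choosing which branch to cancel), but your finishing step is the wrong tool. After finitely many cancellations you only know that some $y'=y-xw$, $w\in R$, satisfies $v(y')\geq \boldsymbol{c}+\boldsymbol{e}$, which is a bounded amount of information: completeness is not among the hypotheses on $R$, and a Krull-intersection argument would require $y\in (x)+\mathfrak{m}^n$ for \emph{every} $n$, i.e.\ an infinite iteration with infinitely many witnesses, exactly what you have not produced. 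What closes the paper's proof is the conductor ideal: from $v(y'x^{-1})\geq\boldsymbol{c}$ one gets $y'x^{-1}\in(R:\overline{R})\subseteq R$, hence $y'\in(x)$ and $y\in(x)$, in finitely many steps and with no completeness. This conductor step is the missing idea in your proposal.

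The second gap is the continuation problem that you flag and defer to (G2). Your scheme needs at every step an exact-value witness, i.e.\ $v(y_n)\in\boldsymbol{e}+S$, and this is not preserved by the iteration: the remainder's value can land in $\Ap(S)$ (in the semigroup of Figure \ref{semtalk} a remainder of value $(2,3)$ can be pushed to $(3,3)\in\Ap(S)$), and (G2), being a closure property of $S$, does not produce an element of $\boldsymbol{e}+S$ with the required value. Moreover this cannot be repaired by a cleverer choice of witnesses: if $v(y_n)\in\Ap(S)$, then by your own first paragraph $\overline{y}=\overline{y_n}\neq 0$, so the descent is genuinely stuck; for instance $y=x+w$ with $v(x)=\boldsymbol{e}=(2,3)$ and $v(w)=(3,3)$ in Figure \ref{semtalk} (coefficients chosen so the second components do not cancel) has $v(y)=\boldsymbol{e}\notin\Ap(S)$ but $\overline{y}=\overline{w}\neq 0$. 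So the element-wise implication ``$v(y)\notin\Ap(S)\Rightarrow\overline{y}=0$'' must be understood, and is in fact the only thing needed later (Theorem \ref{anelloquoziente} uses only the direction you did prove), in the weaker form that every nonzero class of $R/(x)$ admits a representative whose value lies in $\Ap(S)$. Your iteration, steered so as to raise whichever component is still below the conductor and finished with the conductor argument above, proves exactly this dichotomy: either at some step the value falls into $\Ap(S)$, or after finitely many steps it exceeds $\boldsymbol{c}+\boldsymbol{e}$ and then $y\in(x)$.
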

\begin{proof}
	By definition $\overline{y}\ne 0$ in $R/(x)$ if and only if $y \notin (x)$ that is equivalent to say that $yx^{-1}\notin R$. Since $v(yx^{-1})=v(y)-v(x)$,
	if $v(y)\in\Ap(S)$ we immediately get that 	$yx^{-1}\notin R$, i.e. $\overline{y}\ne 0$ in $R/(x)$. 
	Conversely, assume that $v(y)\notin\Ap(S)$, i.e. $v(yx^{-1})=v(y)-v(x)=v(r)$, for some $r \in R$. Since $R$ and both its projections on the two minimal primes are residually rational, it follows that there exists an invertible $u$ in $R$ such that $v(yx^{-1}-ur) > v(r)$; moreover we can choose $u$ in order to increase the first or the second component, as we prefer.
	Hence, applying repeatedly this argument we obtain, after a finite number of steps, that $v(yx^{-1}-u'r') \geq c$, that implies $yx^{-1}-u'r' \in (R:\overline R) \subset R$;
	therefore $y \in (x)$, a contradiction.	
\end{proof}

\begin{remark}
\label{multiplycity}
When $S$ is the value semigroup of a ring $(R, \mathfrak m)$, it is not difficult to see that
an element $x$ is a minimal reduction of $\mathfrak{m}$ if and only if  
$v(x)=\boldsymbol{e}$; hence the integer $e=e_1+e_2$ coincides with the multiplicity of $R$: in fact  $e(R)=\lambda_{R}(R/(x))=\dim_{R/\mathfrak{m}}(R/(x))$ (where $\lambda_R$ denotes the length of an $R$ module); now, using the computation of lengths
	explained above, it is not difficult to check that 
	$\lambda_{R}(R/(x))=d(S\setminus \boldsymbol e +S)=e_1+e_2$.
\end{remark}

It is useful to remark, at this point, that  $S\setminus \boldsymbol e +S = \Ap(S)$. Hence, our first goal is, 
starting form a good semigroup $S\subseteq\mathbb N^2$, to get a partition $\Ap(S)$ in $e_1+e_2=e$ levels that should correspond to the $e$ elements of the Ap\'ery Set of a numerical semigroup.
After that we would like to find $e$ elements $\boldsymbol{\alpha}_1,\dots,\boldsymbol{\alpha}_{e}$ of the Ap\'{e}ry set of $S$, one in each class of the partition,
with the property that, if $S=v(R)$ and $f_i\in R$ are such that $v(f_i)=\boldsymbol{\alpha}_i$, then $\overline{f}_1,\dots, \overline{f}_{e}$ are linear independent in the 
$R/\mathfrak{m}$-vector space 
$R/(x)$ and so they form a basis for it.

We notice again that, given a good semigroups, it is not known a procedure to see if it is a value semigroup of a ring or not; 
so we are forced to use semigroup arguments without the possible help of ring techniques.

\medskip

As in the numerical semigroup case, a symmetric good semigroup has a duality property. Indeed in \cite{delgado}, 
a good semigroup $S$ is said to be \emph{symmetric} if
$$
\boldsymbol{\alpha}\in S\ \Leftrightarrow \ \mathrm \Delta^S(\boldsymbol{\gamma} -\boldsymbol{\alpha})=\emptyset.
$$

Moreover, in the numerical semigroup case, the symmetry of the semigroup $S$ can be characterized by a symmetry of its the Ap\'ery Set: if we order its elements in increasing order
$\Ap(S)=\{w_1, \dots w_e\}$, then $S$ is symmetric if and only if $w_i+w_{e-i+1}=w_e$ for every $i=1, \ldots, e$.

In this paper we also look for an analogue property for $\Ap(S)$ when $S$ is a symmetric good subsemigroup of $\mathbb N^2$.

%A numerical semigroup $S$ is said to be \emph{complete intersection} if $k[t^S]$ (or, equivalently, $k[[t^S]]$) is a complete intersection ring. 
%
%
%\begin{prop} 
%	Let $S$ be a complete intersection numerical semigroup; let $f_1,\dots, f_s$ be formal power series in the
%	variable $t$ and $R=k[[f_1,\ldots, f_s]]\subseteq k[[t]]$. If $v(R)=S$, then $R$ is a complete intersection ring.
%\end{prop}
%\begin{proof}
%	Let $I_n=t^nk[[t]]\cap R$; the filtration $\{I_n\}$ is a $\mathfrak m$-filtration.
%	The associated graded ring of $R$ with respect to this filtration is $G_{\mathfrak m}(R)=\oplus_{n\ge 0}\dfrac{I_n}{I_{n+1}}\cong k[t^S]$, that is a complete intersection %ring, since $S$ is complete intersection. This implies that $R$ is complete intersection.
%\end{proof}

\section{Ap\'ery Set of good semigroups in $\mathbb N^2$}

In order to get the 
partition of the Ap\'ery Set of $S$ we are looking for, we need to introduce 
a new relation on $\mathbb N^2$ (as it is done in \cite{a-u}): 
we say that $(\alpha_1,\alpha_2)\le\le(\beta_1,\beta_2)$ if an only if $(\alpha_1,\alpha_2)=(\beta_1,\beta_2)$ or $(\alpha_1,\alpha_2)\ne(\beta_1,\beta_2)$ and $(\alpha_1,\alpha_2)\ll(\beta_1,\beta_2)$, where the last means $\alpha_1<\beta_1$ and $\alpha_2<\beta_2$.

%\begin{definition} 
%\label{d1.5}
% Let $\al=(a_1, a_2) \in S$. We define $$ \Delta_1(\al) = \lbrace (a_1, b_2) \, | \, b_2 > a_2 \rbrace $$  and $$ %\Delta_2(\al)= \lbrace (b_1, a_2) \, | \, b_1 > a_1 \rbrace.$$
% Also we define $$\Delta^S(\al)= (\Delta_1(\al) \cup \Delta_2(\al)) \cap S.$$
% An element $\al \in S$ is said \it absolute \rm if $ \Delta^S(\al) = \emptyset. $
 % \end{definition}

We define the following subsets of $\Ap(S)$:
$$ B^{(1)}=\{\al \in \Ap(S) : \al \  \mbox{is maximal with respect to} \le\le\},$$
$$ C^{(1)}:= \{ \al \in B^{(1)} : \al= \be_1 \wedge \be_2 \mbox{ for some } \be_1, \be_2 \in B^{(1)} \setminus \lbrace \al \rbrace \}
\ \ \mbox{and} \ \ D^{(1)}=B^{(1)}\setminus C^{(1)}.$$\\
Assume $i>1$ and that $D^{(1)},\dots , D^{(i-1)}$ have been defined; we set
$$ B^{(i)}=\{\al \in \Ap(S)\setminus (\bigcup_{j < i} D^{(j)}) : \al \  \mbox{is maximal with respect to} \le\le\},$$
$$ C^{(i)}:= \{ \al \in B^{(i)} : \al= \be_1 \wedge \be_2 \mbox{ for some } \be_1, \be_2 \in B^{(i)} \setminus \lbrace \al \rbrace \}\ \ \mbox{and} \ \ D^{(i)}=B^{(i)}\setminus C^{(i)}.$$\\
Clearly, for some $N \in \mathbb N_+$, we have $\Ap(S)=\bigcup_{i=1}^N D^{(i)}$  and 
 $D^{(i)}\cap D^{(j)}=\emptyset$, for any $i\neq j$. 
 For simplicity we prefer to number the set of the partition in increasing order,
 so we set $A_i=D^{(N+1-i)}$. Hence 
 $$\Ap(S)=\bigcup_{i=1}^N A_i $$
 We want to prove that $N=e_1+e_2$. We will call the sets $A_i$ \it levels \rm of the Ap\'ery Set.

 \begin{figure}[h] 
  
  \begin{tikzpicture}[scale=1.2]
	\begin{axis}[grid=major, ytick={0,3,4,5,6,7,8, 9, 10, 11}, xtick={0,2,3,4,5,6}, yticklabel style={font=\tiny}, xticklabel style={font=\tiny}]
\addplot[only marks] coordinates{ (2,3) (4,6) (4,7) (5,6) (5,9) (5,10) (5,11) (5,12) (5,13) (5,14)  
(6,6)   (7,6)   (8,6) (6,9) (6,10)(6,11)  
(7,9) (7,10) (8,9) (8,10) (7,12) (7,13) (7,14) (8,12) (8,13) (8,14)  }; 
 \addplot[only marks, mark=text, mark options={scale=1,text mark={ \tiny 1}}, text mark as node=true] coordinates{(0,0)}; %1
 \addplot[only marks, mark=text, mark options={scale=1,text mark={ \tiny 2}}, text mark as node=true] coordinates{ (2,4) (3,3)}; %2
 \addplot[only marks, mark=text, mark options={scale=1,text mark={ \tiny 3}}, text mark as node=true] coordinates{ (3,6) (3,7) (3,8) (4,3) (5,3) (6,3) (7,3) (8,3) }; %3
 %\addplot[only marks, mark=diamond] coordinates{ (8,9) (8,10) (8,11) (9,8) (10,8) (11,8)  }; %6
 \addplot[only marks, mark=text, mark options={scale=1,text mark={ \tiny 4}}, text mark as node=true] coordinates{(5,7) (6,7) (7,7) (8,7) (4,8) (3,9) (3,10) (3,11) (3,12) (3,13) (3,14) }; %4
 \addplot[only marks, mark=text, mark options={scale=1,text mark={ \tiny 5}}, text mark as node=true] coordinates{(6,12) (6,13) (6,14) (7,11) (8,11)  }; %5
 \end{axis}
  \end{tikzpicture}
  \caption{\scriptsize The Ap\'{e}ry Set of the semigroup in Figure \ref{semtalk}. We mark the elements of the set $A_i$ with the number $i.$  %$ A_1 = \diamond $, $ A_2 = \circ $, $ A_3 = \square $, $ A_4 = \bigtriangleup $, $ A_5 = \times $.
   }
\label{semtalklev}
\end{figure}
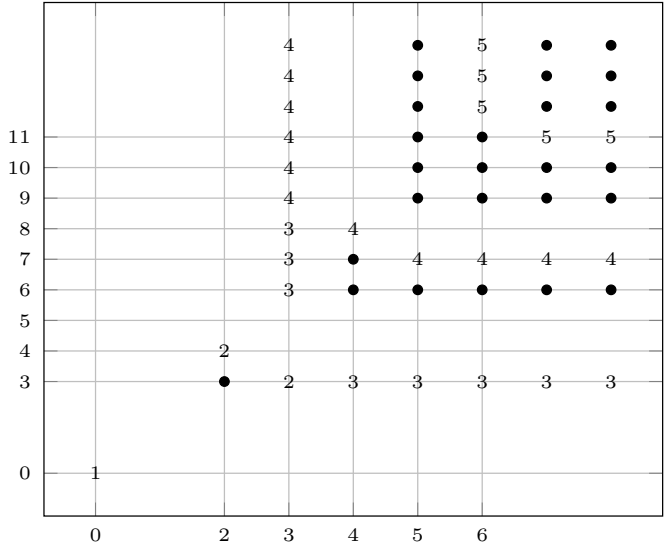

\begin{remark} \label{rem1}
	It is straightforward to see that $A_N=\Delta(\g+\boldsymbol{e})=\Delta^S(\g+\boldsymbol{e})$
	and that $A_1=\{\boldsymbol {0}\}$. Moreover, if $\al, \be \in \Ap(S)$, $\al \ll \be$ and $\al \in A_i$, then $\be \in A_j$ for some $j>i$.
\end{remark}

\begin{lemma}\label{basic-Ap} The sets $A_i$ satisfy the following properties:
	\begin{itemize}
		\item[(1)] for any $\al \in A_i$ there exists $\be \in A_{i+1}$ such that 
		$\al \ll \be$ or $\al = \be_1 \wedge \be_2$ with $\be_1, \be_2 \in A_{i+1}$ (both cases can
		happen at the same time); 
		\item[(2)] for every $\al \in A_i$ and $\be \in A_j$, with $j\geq i$, $\be \not\ll \al$; 
		\item[(3)] if $\al \in A_i$, $\be \in \Ap(S)$ 
		and $\be \geq \al$, then $\be \in A_i \cup \dots \cup A_N$;
		\item[(4)] if $\al=(\alpha_1,\alpha_2), \be=(\alpha_1,\beta_2) \in A_i$, with $\alpha_2 < \beta_2$, then
		for any $\de=(\delta_1,\delta_2) \in \Ap(S)$ such that $\delta_1 > \alpha_1$ and $\delta_2 \geq \alpha_2$,
		we get $\de  \in A_{i+1} \cup \dots \cup A_N$; an analogous statement holds switching
		the components;
		\item[(5)] if $\al \ll \be \in \Ap(S)$ and they are consecutive in $S$, then there exists $i>0$ such that
		$\al \in  A_i$ and $\be \in A_{i+1}$; if $\al < \be \in \Ap(S)$, they share a component and 
		they are consecutive in $S$, then there exists $i>0$ such that either
		$\al \in  A_i$ and $\be \in A_{i+1}$ or $\al, \be \in A_i$;
		\item[(6)] let $\al \in A_i$ and let be $\be_1, \dots, \be_j$ all the elements of $\Ap(S)$, $\al<\be_r$ and
		consecutive to $\al$ in $\Ap(S)$. Then at least one of them belongs to $A_{i+1}$;
		\item[(7)] $\al=(\alpha_1,\gamma_2+e_2+1) \in A_i \Leftrightarrow (\alpha_1,n) \in A_i$ for some 
		$n\geq \gamma_2+e_2+1$ $\Leftrightarrow (\alpha_1,n) \in A_i$ for all $n\geq \gamma_2+e_2+1$; an analogous statement holds switching
		the components;
		\item[(8)] if $\al=(\alpha_1,\gamma_2+e_2+1) \in A_i$ and $\be =(\beta_1, \gamma_2+e_2+1) \in \Ap(S)$,
		with $\beta_1<\alpha_1$ and such that for every $a$, $\beta_1<a<\alpha_1$, the point 
		$(a,\gamma_2+e_2+1) \notin \Ap(S)$, then $\be \in A_{i-1}$; an analogous statement holds switching
		the components. (We could state this property saying that, definitively, consecutive vertical lines 
		(respectively, horizontal lines) of the Ap\'ery Set belong to consecutive levels.)
	\end{itemize}
\end{lemma}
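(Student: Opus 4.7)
The plan is to derive all eight items from the inductive construction $A_i=D^{(N+1-i)}$, using only the axioms (G1), (G2) and the boundedness $\Ap(S)\subseteq\{\mu\le\g+\e\}$. At each step $k$ one has $T_k=\Ap(S)\setminus\bigcup_{j<k}D^{(j)}$, takes its $\ll$-maximal subset $B^{(k)}$, and declares $D^{(k)}=B^{(k)}\setminus C^{(k)}$, where $C^{(k)}$ collects those $\be\in B^{(k)}$ that admit a non-trivial decomposition $\be=\be_1\wedge\be_2$ with $\be_1,\be_2\in B^{(k)}\setminus\{\be\}$. Two structural levers drive the whole argument: no element of $B^{(k)}$ can be $\ll$-dominated inside $T_k$, and no element of $D^{(k)}$ can be realised as such a $\wedge$.

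I would first prove (2) and (3), since the other parts lean on them. Part (2) is immediate: at step $N+1-j$, when $A_j$ is produced, $\al$ is still present in $T_{N+1-j}$ (it is removed only at step $N+1-i\ge N+1-j$), so $\be\ll\al$ would contradict the $\ll$-maximality of $\be$. For (3) the delicate case is $\be>\al$ sharing a coordinate, with $\be\in A_j$, $j<i$: at step $N+1-i$ the element $\be$ is still in $T_{N+1-i}$; if it is not $\ll$-maximal there, any $\nu\in T_{N+1-i}$ with $\nu\gg\be$ is automatically $\gg\al$ (the strict growth of $\nu$ over $\be$ propagates to $\al$ through the shared coordinate), breaking $\al$'s maximality; and if $\be$ is maximal but not yet removed it must lie in $C^{(N+1-i)}$, whose $\wedge$-decomposition a short case analysis shows always exhibits a factor $\ll$-above $\al$, a final contradiction. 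Part (4) then follows from (3) and the $C$/$D$ dichotomy applied to $\al\in D^{(N+1-i)}$: if the candidate $\de$ has level $i$, then either $\de\gg\al$ breaks $\al$'s maximality or $\delta_2=\alpha_2$ gives $\de\wedge\be=\al$, placing $\al$ in $C^{(N+1-i)}$.

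Part (1) is a bounded recursive descent: when $\al\notin B^{(k-1)}$ pick a $\ll$-maximal element of $T_{k-1}$ above $\al$; when $\al\in C^{(k-1)}$ start from one of its decompositions; and whenever a candidate factor still belongs to $C^{(k-1)}$ expand it through its own decomposition; finiteness of $\{\mu\in\Ap(S):\mu\ge\al\}$ forces termination inside $D^{(k-1)}=A_{i+1}$. Part (5) combines (1), (3), and (G1): from $\mu\in A_{i+1}$ with $\al\ll\mu$, if $\be$ had level $\ge i+2$ then either $\mu$ itself or $\mu\wedge\be$ (in $S$ by (G1)) would sit strictly between $\al$ and $\be$, contradicting consecutivity in $S$; the shared-coordinate variant runs in parallel. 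Part (6) is the analogue for consecutivity in $\Ap(S)$ only: any $\le$-minimal element $\nu_0$ of $\{\nu\in\Ap(S):\al<\nu\le\mu\}$ is consecutive-above $\al$ and sits in $A_i\cup A_{i+1}$ by (3); if $\nu_0\in A_i$ (necessarily sharing a coordinate with $\al$), one invokes (4) to locate a different $\le$-minimal consecutive, sharing the complementary coordinate with $\al$ or strictly $\gg\al$, that is forced into $A_{i+1}$.

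For (7)(a), the condition $(\alpha_1,n)\in\Ap(S)$ with $n\ge\gamma_2+e_2+1$ reduces, via (G1) with the conductor and the remark $\Delta_1(\alpha,c_2)\subseteq S$, to the $n$-independent condition $(\alpha_1-e_1,c_2)\notin S$. For (7)(b) and (8) I would induct on the removal step: the $\Ap(S)$-elements with second coordinate $\ge\gamma_2+e_2+1$ form finitely many complete vertical rays at first coordinates $a_1<\cdots<a_s$; no ``short'' element (second coordinate $\le\gamma_2+e_2$) can $\ll$-dominate a ray element, so only the rightmost remaining ray is $\ll$-maximal at step $k$. Its elements cannot belong to $C^{(k)}$: any $\wedge$-partner would need first coordinate $\ge a_s$ and a second coordinate matching the large target value, which is impossible for a short element and impossible for any other ray at equal or smaller first coordinate. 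So the rightmost ray falls entirely into $D^{(k)}$, and peeling off rays right-to-left places consecutive rays in consecutive $D^{(k)}$'s, yielding (7) and (8) simultaneously. The main technical obstacles I foresee are the $\wedge$-decomposition case analysis under (3), the recursive descent in (1), and, in (6), locating the ``alternative'' consecutive in $A_{i+1}$ when the naive choice lands in $A_i$; in each of these, axiom (G2) supplies the companion elements along a shared coordinate that (3) and (4) then upgrade to level information.
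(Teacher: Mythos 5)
Parts (2)--(5) of your reconstruction check out, and your right-to-left ``ray peeling'' for (7)--(8) is a legitimate alternative to the paper's route (the paper instead propagates a hypothetical level jump along a vertical line until it reaches $\Delta_1(\g+\e)\subseteq A_N$, and proves (8) by a maximality argument). One justification in (1), however, is wrong as stated: you appeal to ``finiteness of $\{\mu\in\Ap(S):\mu\ge\al\}$'' to terminate the expansion of factors lying in $C^{(k-1)}$, but this set is typically infinite, since $\Ap(S)$ contains infinite vertical and horizontal lines (e.g.\ the points $(3,n)$, $n\ge 6$, in Figure \ref{semtalklev}). The descent does stop, but for a structural reason you did not give: writing $T_k=\Ap(S)\setminus\bigcup_{j<k}D^{(j)}$, if $\al=\be_1\wedge\be_2$ with $\be_1\in\Delta_1(\al)\cap C^{(k-1)}$, then the $\Delta_2$-factor of any decomposition of $\be_1$ is $\gg\al$, and it either lies in $D^{(k-1)}=A_{i+1}$ (first alternative of (1)) or it survives into $T_k$ and contradicts the $\ll$-maximality of $\al$ there. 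This is easily repaired, but the finiteness argument itself is not available.

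The genuine gap is in (6). Your mechanism certifies the upper bound ``level $\le i+1$'' only for consecutive elements lying inside the box $\{\nu\in\Ap(S):\al<\nu\le\mu\}$ under a chosen $\mu\in A_{i+1}$, via (3). When the unique minimal element of that box is, say, $\nu_0=(\alpha_1,n)\in A_i$ on the vertical line through $\al$, you say one ``invokes (4) to locate a different consecutive \dots forced into $A_{i+1}$''; but (4) only yields the lower bound $\ge i+1$ for points with first coordinate $>\alpha_1$, and such a consecutive element need not be $\le\mu$, so nothing in your toolkit bounds its level from above. You also have not excluded the configuration in which every element of $\Ap(S)$ above $\al$ is $\ge\nu_0$, so that $\nu_0\in A_i$ is the \emph{only} consecutive element of $\al$ in $\Ap(S)$; ruling out precisely these situations is the content of (6), not something (3)+(4) give for free. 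The paper supplies the missing upper bound globally: if all consecutive elements had levels $\ge h\ge i+2$, then every element of $\Ap(S)$ strictly dominating $\al$ would have level $\ge h$, so $\al$ would be $\ll$-maximal in $\Ap(S)\setminus\bigcup_{s\ge h}A_s$, and the $C$/$D$ dichotomy at that step forces either $h=i+1$ or a decomposition $\al=\de_1\wedge\de_2$ by elements of $A_{i+1}$ which are themselves consecutive to $\al$ --- in either case a consecutive element of level $i+1$; the case with some consecutive element in $A_i$ is then reduced to this one via (4). Since (6) is the engine of the chain construction in Theorem \ref{livelli}, this omission is substantive and your proof of (6) needs the paper's maximality argument (or an equivalent) rather than the box-minimality trick alone.
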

%\begin{lemma} 
%\label{l1}
%For any $\al \in C_i$, $\al <<_S \g+1$. Hence 
%The set $C_i$ is finite for every $i$ and for every $\al \in C_i$ except at most one of them, $\al <<_S \g+1$. %\bf da completare!
%\end{lemma}
%\begin{proof}
%Notice that by definition, if $\al <<_S \be$, then $\al $ and $\be$ cannot both be in $A_i$ (and therefore in %$B_i$). Assuming that there are infinite elements in $C_i$, we can find two of them ....
%\end{proof}
\begin{proof} Properties (1) (2) and (3) follow immediately by definition.

\noindent (4) If $\delta_2 > \alpha_2$, then $\al \ll \de$ and the assertion follows by definition of levels.
If $\delta_2 = \alpha_2$, by property (3), $\de \in A_i \cup \dots \cup A_N$; if  $\de \in A_i$
we get a contradiction by the definition of levels, since $\al = \be \wedge \de$.

\noindent (5) Let $i$ be such that $\al \in A_i$. If $\al=(\alpha_1,\alpha_2) \ll \be=(\beta_1,\beta_2)$ 
and they are consecutive in $S$, 
then there are no elements $(a,b)$ of $S$ such that $\alpha_1 \leq a < \beta_1$ and $\alpha_2 < b$
or $\alpha_1 < a$ and $\alpha_2 \leq b < \beta_2$, since $(a,b)\wedge \be$ would be a point of $S$
 between
$\al$ and $\be$; in particular $\al$ cannot be obtained as infimum of points of $S$. Moreover $\be \in A_j$, with $j > i$. By property (3) all the points of $\Ap(S)$ 
bigger than or equal to $\be$ belong to $A_h$, with $h \geq j$; hence
$\al$ is maximal in $\Ap(S)\setminus (\bigcup_{h \geq j} A_h)$ with respect to $\le\le$ and, by definition of levels, this implies that
$j=i+1$.

Assume now that $\al$ and $\be$ share a component (e.g. $\al=(\alpha_1,\alpha_2), \be=(\alpha_1,\beta_2)$).
Since they are consecutive in $S$, there are no elements $(a,b)$ of $S$ such that $\alpha_1 \leq a$
and $\alpha_2 < b < \beta_2$. Let $i$ be such that $\al \in A_i$; by property (3) $\be \in  A_j$
with $j \geq i$ and all the points of $\Ap(S)$ 
bigger than or equal to $\be$ belong to $A_h$, with $h \geq j$; by definition of levels, $\be$
is maximal in $\Ap(S)\setminus (\bigcup_{h > j} A_h)$ with respect to $\le\le$ and therefore also $\al$ is maximal in the same set;
hence either $j=i$ or $j=i+1$.

\noindent (6) Assume by way of contradiction that for all $r$, $\be_r \in A_{h_r}$ with $h_r>i+1$, set $h=$min$\{h_r: r=1, \dots, j\}$. Hence
$\al$ is maximal in $\Ap(S)\setminus (\bigcup_{s \geq h} A_s)$ with respect to $\le\le$. In order to have $\al \in A_i$,
either $h=i+1$ (that is the thesis) or $h=i+2$ and $\al$ is obtained as infimum of two elements 
$\de_1, \de_2 \in A_{i+1}$.
But also in the second case, both $\de_i$ have to be consecutive to $\al$ (otherwise they would be bigger
than some elements $\be_r$, so they could not belong to $A_{i+1}$); hence we get a contradiction by the 
assumption that $h=i+2$. 
If, on the other hand, there exists $\be_r \in A_i$, it necessarily shares a component with $\al$. 
Hence, by property (4) all the other $\be_s$ belong to  $A_{i+1} \cup \dots \cup A_N$.
Now, applying the same argument as above, one of them has to be in $A_{i+1}$.

\noindent (7) Any element $\al =(\alpha_1, \alpha_2)$ with $\alpha_2 > \gamma_2$ 
(respectively, $\alpha_1 > \gamma_1$) belongs to $S$ if and only if 
$(\alpha_1, \gamma_2+1) \in S$ (respectively, $((\gamma_1+1, \alpha_2) \in S$). Hence it is clear that 
$\al=(\alpha_1,\gamma_2+e_2+1) \in \Ap(S)$ if and only if $(\alpha_1,n) \in \Ap(S)$ for some 
$n\geq \gamma_2+e_2+1$  if and only if $(\alpha_1,n) \in \Ap(S)$ for all $(\alpha_1,n)$ (and the analogous statement holds switching the components). 

So we have only to prove that these elements belongs to the same level (we will prove this fact for vertical lines and the corresponding statement for
horizontal line is analogous). If not, by property (5), there exist two elements $\al_1=(\alpha_1,\alpha_2) \in A_i$
and $ \al_2=(\alpha_1,\alpha_2+1)\in A_{i+1}$ consecutive in $\Ap(S)$. Now let $\beta_1> \alpha_1$ be the smallest integer such that 
$\be =(\beta_1, \alpha_2)\in \Ap(S)$; since $\al_1=\al_2\wedge \be$ and there are no elements $(a,\alpha_2)$ of $\Ap(S)$, with $\alpha_1 < a < \beta_1$, it is clear that either $\be \in A_i$ or 
%since $\al_1=\al_2\wedge \be$ and there are no elements $(a,b)$ of $\Ap(S)$, with $\alpha_1 < a < \beta_1$ and $\alpha_2 \leq b < \alpha_2+1 $, it is clear that either $\be \in A_i$ or 
$\be \in A_{i+1}$; moreover $\al_2 \ll (\beta_1, \alpha_2+2)$, hence the last belongs to $A_j$ with $j>i+1$.
Hence, also in the vertical line corresponding to $\beta_1$ there are elements on different levels.
Iterating the argument we get that the same happens for $\Delta_1(\g+\boldsymbol{e}) \subseteq A_N$;
a contradiction.

\noindent (8) By property (6), alle the elements of $S$ above $\al$ are in $A_i$. Hence 
$\be \ll (\alpha_1, \gamma_2+e_2+ 2)$ and therefore it belongs to $A_j$ with $j <i$. Moreover
the hypothesis implies that $\be$ is maximal in $\Ap(S)\setminus (\bigcup_{h \geq i} A_h)$ with respect to $\le\le$
and cannot be obtained as infimum of two other elements maximal in the same set. 
The thesis follows immediately. 
\end{proof}

Next lemma describes global properties of the elements of a good semigroup $S$ and of its Ap\'{e}ry Set.

\begin{lemma}\label{altro-Ap} The following assertions hold:
	\begin{itemize}
		\item[(1)]  Let $\al \in \N^2$ and assume there is a finite positive number of elements in $ \Delta^S_1(\al) \cap (\e+S)$. Call $\de$ the maximum of them. Hence $ \Delta^S(\de) \subseteq \Ap(S) $;
		\item[(2)] Let $ \al \in \Ap(S) $. If there exists $ \be \in (\e+S) \cap \Delta_1(\al) $, then $ \Delta_2^S(\al) \subseteq \Ap(S) $;
		\item[(3)] Let $ \al=(a_1, a_2) \in A_i $ and suppose there exists $b_2 < a_2$ such that $\de=(a_1, b_2) \in S$ and $ \Delta_2^S(\de)  \subseteq \Ap(S) $. Then the minimal element $\be=(b_1, b_2)$ of $ \Delta_2^S(\de)   $ is in $A_j$ for some $j \leq i$. In particular, if $ \Delta^S(\de)  \subseteq \Ap(S) $ and  $\al$ is the minimal element of $ \Delta_1^S(\de) $, $\be \in A_i.$
		\item[(4)] Let $ \al=(a_1, a_2) \in A_i $ and suppose there exists $\de \in (\e+S) \cap \Delta_1(\al)$. Then $ \Delta_2^S(\al)  \subseteq \Ap(S) $ and the minimal element $\be=(b_1, a_2)$ of $ \Delta_2^S(\al) $ is also in $A_i$.
		\item[(5)] Let $\al \in A_i$ and assume $ \Delta^S_1(\al) \subseteq \Ap(S) $. Assume also that there exists $ \be \in \Delta^S_1(\al) \cap A_{i+1} $. Then there exists $\te \in (\Delta^S_1(\al) \cap A_i) \cup \lbrace  \al \rbrace$ such that $\te < \be$ and $ \Delta^S(\te) \subseteq \Ap(S) $. 
		
		The analogous assertions hold switching the components.
			\end{itemize}
\end{lemma}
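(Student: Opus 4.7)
The strategy throughout is to combine axioms (G1) and (G2)---which determine membership in $\e+S$---with the structural properties of levels compiled in Lemma \ref{basic-Ap}, proving the five parts sequentially so each can rely on its predecessors. Parts (1) and (2) are direct (G1)/(G2) arguments: for (1) I would take $\te \in \Delta^S(\de)$ and distinguish according to which coordinate $\te$ shares with $\de$. If $\te \in \Delta_1(\de)\subseteq\Delta_1(\al)$, the assumption $\te\in\e+S$ contradicts the maximality of $\de$ directly. If $\te \in \Delta_2(\de)$, applying (G2) to $\de-\e$ and the hypothetical $\te-\e$ (which share their second coordinate) yields an element $\mu \in S$ with $\mu_1=\de_1-e_1$ and $\mu_2>\de_2-e_2$, so $\mu+\e$ sits in $\Delta_1(\al)\cap(\e+S)$ strictly above $\de$, again contradicting maximality. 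For (2), if some $\te\in\Delta_2^S(\al)$ satisfied $\te-\e\in S$, then $(\be-\e)\wedge(\te-\e)=\al-\e$ would lie in $S$ by (G1), contradicting $\al\in\Ap(S)$.

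Parts (3) and (4) are level-theoretic. For (3), the hypothesis gives $\be\in\Ap(S)$, so $\be\in A_j$ for some $j$; I would walk vertically upward from $\be$ through $\Ap(S)$ (using $\Delta_2^S(\de)\subseteq\Ap(S)$ together with (G2) to generate an ascending chain) until reaching a point whose coordinates dominate $\al$, then invoke properties (3) and (4) of Lemma \ref{basic-Ap} to conclude $j\leq i$. Under the stronger hypothesis of the ``in particular'' clause, the symmetric horizontal walk starting from $\al$ likewise forces $j\geq i$. Part (4) then combines (2) with this level analysis: (2) places $\be$ in $\Ap(S)$, property (5) of Lemma \ref{basic-Ap} confines $\be$ to $A_i\cup A_{i+1}$, and the presence of $\de\in\e+S$ above $\al$ excludes $\be\in A_{i+1}$ via property (4) of Lemma \ref{basic-Ap}, by using (G2) on $\al$ and $\de$ to produce an Ap\'ery element whose level would be forbidden.

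For (5), let $\te$ be the highest element of $(\Delta_1^S(\al)\cap A_i)\cup\{\al\}$ that is strictly below $\be$; by property (3) of Lemma \ref{basic-Ap} everything of $\Delta_1^S(\al)$ strictly above $\te$ and up to $\be$ lies in $A_{i+1}\cup\cdots$, so $\te$ exists (with $\al$ itself as the default choice). The vertical inclusion $\Delta_1^S(\te)\subseteq\Delta_1^S(\al)\subseteq\Ap(S)$ is automatic. For $\Delta_2^S(\te)\subseteq\Ap(S)$, I would argue by contradiction: a hypothetical $\mu=(\mu_1,\te_2)\in\Delta_2^S(\te)\cap(\e+S)$ combined with (G2) applied to $\te$ and $\mu$ would produce $\nu\in\Delta_1^S(\te)\subseteq\Ap(S)$; then iterating this construction using the closure of $\e+S$ under $\wedge$ (from (G1)) together with the maximality mechanism of Part (1), I would push the conflict back into the column $\Delta_1^S(\al)$, contradicting $\Delta_1^S(\al)\subseteq\Ap(S)$.

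The main obstacle I foresee is the horizontal half of Part (5): the column hypothesis on $\al$ does not directly constrain $\e+S$ on rows through $\te$, so the contradiction must be built by carefully exploiting the specific choice of $\te$ as a maximal $A_i$-point in the column (together with a companion $A_{i+1}$-point on the same row of $\te$, guaranteed by property (1) of Lemma \ref{basic-Ap}), and propagating the relation through iterated (G1)/(G2) manipulations and Parts (1)--(4). Throughout, the main combinatorial tool is property (4) of Lemma \ref{basic-Ap}, which forbids Ap\'ery points of a given level $i$ from being dominated by other $A_i$-points that share a coordinate.
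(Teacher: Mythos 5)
Your parts (1) and (2) are correct and essentially the paper's own argument (using (G1)/(G2) inside the good ideal $\e+S$), but the weight of the lemma lies in (3) and (4), and there your sketches have genuine gaps. In (3) you must bound the level of $\be$ \emph{from above} by $i$; the tools you cite, Lemma \ref{basic-Ap}(3) and (4), only push levels \emph{up} (elements above/right of level-$i$ points lie in levels $\geq i$, resp.\ $\geq i+1$), so ``walking upward from $\be$ until a point dominates $\al$'' yields no upper bound on the level of $\be$ at all. The paper instead produces $\te=(g_1,g_2)\in A_{i+1}$ with $g_1>a_1$, $g_2\geq a_2$, shows $g_1\geq b_1$ by minimality of $\be$, disposes of the easy case $g_1>b_1$ (then $\te\gg\be$, so $j\leq i$), and settles the delicate case $g_1=b_1$ (the $A_{i+1}$ witness sitting directly above $\be$) only through an iteration that would otherwise manufacture elements $\om^k\in\Delta_2(\de)\cap A_{i+k}$ for every $k$, contradicting the finiteness of the number of levels. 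Your proposal never engages with this case. Likewise in (4): confining $\be$ to $A_i\cup A_{i+1}$ via Lemma \ref{basic-Ap}(5) is fine ($\be$ is indeed consecutive to $\al$ in $S$), but excluding $\be\in A_{i+1}$ cannot be done ``via Lemma \ref{basic-Ap}(4)'': that property needs two elements of the \emph{same level} on one vertical line, while you only have $\al\in A_i$ and $\de\in\e+S$ (which carries no level), and applying (G2) to $\al$ and $\de$ merely reproduces an element of $\Delta_2^S(\al)$, whose level ($i$ or $i+1$) is exactly what is to be decided. The paper's proof splits on whether some element of $A_{i+1}$ dominates $\al$; when none does, it runs a nontrivial iterative climb up $\Delta_1(\al)$ using (1), (2), (3) and Lemma \ref{basic-Ap}(5), terminating through the maximum of $\Delta_1(\al)\cap(\e+S)$. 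Nothing in your sketch replaces this.

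For (5), your choice of $\te$ (the highest element of $(\Delta_1^S(\al)\cap A_i)\cup\{\al\}$ strictly below $\be$) can be made to work, but not by the route you describe: an element $\mu\in\Delta_2^S(\te)\cap(\e+S)$ does not conflict with $\Delta_1^S(\al)\subseteq\Ap(S)$ --- that configuration is perfectly possible for non-topmost points of the column --- so ``pushing the conflict back into the column'' has no contradiction to land on. The correct mechanism is part (4) with the components switched: if $\Delta_2^S(\te)\nsubseteq\Ap(S)$, then the minimal element of $\Delta_1^S(\te)$ lies in $A_i$; it is $\leq\be$ and cannot equal $\be\in A_{i+1}$, hence it is an $A_i$-point of the column strictly between $\te$ and $\be$, contradicting the maximality of $\te$. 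This is, in substance, the paper's iterative argument (which starts at $\al$ and repeatedly applies (4)) read from the other end. In summary: (1) and (2) stand, (5) is repairable once (4) is available, but (3) and (4) as proposed are not proved.
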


\begin{proof} %(1) Since $\de$ is the maximum of $ \Delta^S_1(\al) \cap (\e+S)$, we have $ \Delta^S_1(\de) \subseteq \Ap(S) $ and hence $ \Delta^S_1(\de - \e) = \emptyset. $ Using the axiom (G2) of good semigroups we get that $ \Delta^S_2(\de - \e) = \emptyset $ and therefore also $ \Delta^S_2(\de) \subseteq \Ap(S) $.
%\noindent (2) Assume by way of contradiction that there exists $ \de \in \Delta_2(\al) \cap (\e+S) $. Then $\be - \e, \de -\e \in S$ and therefore $\al - \e = (\be - \e) \wedge (\de -\e) \in S$ and this is a contradiction.
(1) Since $\de$ is the maximum of $ \Delta^S_1(\al) \cap (\e+S)$, we have $ \Delta^S_1(\de) \subseteq \Ap(S) $. Now, since $(\e+S)$ is a good ideal of $S$, by property (G2), also $ \Delta^S_2(\de) \subseteq \Ap(S) $.

\noindent (2) Assume that there exists $ \de \in \Delta_2(\al) \cap (\e+S) $. Then again, since $(\e+S)$ is a good ideal, by property (G1), $\al= \be \wedge \de \in (\e+S)$ and this is a contradiction.

\noindent (3) First we notice that $ \Delta_2^S(\de)  $ is non-empty since also $ \Delta_1^S(\de)  $ is non-empty (by axiom (G2)). Now, if $i=N$, the thesis easily follows by Remark \ref{rem1}.
For $i < N$, we use the following argument: by definition of $\Ap(S)$ we can always find an element $ \te=(g_{1},g_{2}) \in A_{i+1} $ with $ g_{1} > a_1 $ and $ g_{2} \geq a_2 $. Hence, the fact that $ \be $ is the minimal element of $ \Delta_2^S(\de) $ implies that $ g_{1} \geq b_1 $ and this implies $ \be \in A_{j} $ for $ j \leq i+1 $. 

If we assume by way of contradiction $ \be \in A_{i+1} $ we would have $ g_{1} = b_1 $ and hence, by axiom (G2), there exists an element $ \om = (h_1, b_2)$ with $h_1 > b_1$. Since $ \Delta_2^S(\de)  \subseteq \Ap(S) $, we have $ \om \in \Ap(S) $ and we may assume $ \om $ minimal in $ \Delta_2^S(\be) $. Thus, if $ \om \in A_{i+1} $ we have $\be= \te \wedge \om \in A_i$, otherwise we should have $ \om \in A_{j} $ for some $j > i+1$. But now we are in the same situation of the hypothesis of the lemma with $ \te, \be, \om $ playing the role of $ \al, \de, \be $. In this way, iterating the process, we can create an infinite sequence of elements $ \om^k \in \Delta_2(\de) \cap A_{i+k} $ and this is a contradiction because the levels of $\Ap(S)$ are in a finite number. The last sentence of the statement follows since, having also $ \Delta_1^S(\de)  \subseteq \Ap(S) $, we apply the same result and get the level of $\al$ less or equal than the level of $\be.$

\noindent (4) If $i=N$, the thesis is clear by Remark \ref{rem1}. For $i < N,$ clearly $ \Delta_2^S(\al) $ is non-empty and it is contained in $\Ap(S)$ by (2). First assume that there exists $\te=(g_1, g_2) \in A_{i+1}$ such that $ \te \gg \al $. Since $\be $ is the minimal element of $ \Delta_2^S(\al)  $, we have $g_1 \geq b_1.$ But now, if $g_1 > b_1$, then $ \te \gg \be $ and hence $ \be \in A_i $. Instead if $g_1=b_1$ we can find a minimal element $\om \in \Delta_2(\be) \cap \Ap(S)$ and $\be =  \te \wedge \om$. By (3), $\om \in A_j$ with $j \leq i+1$ and thus $\be \in A_i.$

Now assume that there is no element of $A_{i+1}$ dominating $\al$. % with respect to the relation $<<_S$. 
It follows that $\al =  \te \wedge \be$ with $\te, \be \in A_{i+1}$. We can assume by way of contradiction both of these elements to be minimal, otherwise we would have the minimal $\be \in A_i $. If $\te \in \Delta_1(\al)$ is still such that $\te < \de$, we have $ \Delta_2^S(\te)  \subseteq \Ap(S) $ and we can find in it a minimal element $\om \in \Delta_2^S(\te) $ which has to be in $A_{i+2}$ by Lemma \ref{basic-Ap}(5) (otherwise we would have an element of $A_{i+1}$ dominating $\al$).

Suppose there exists $\om^1 \in A_{i+2}$ such that $\om^1 \gg \te$. Clearly we cannot have neither $ \om^1 \gg \om $ nor $ \om^1 \in \Delta_1(\om) $ because this would contradict (3). Hence there exists $\om^1 \wedge \om \in \Delta_2^S(\te) $ and this is a contradiction since $\om $ is minimal in $ \Delta_2^S(\te) $.     %which has to be in $ A_{i+1} $ and it dominates $\al$. This is a contradiction.

Hence, we can assume that there are not elements of $ A_{i+2} $ dominating $ \te $. This means that $ \te $ is the minimum of  two elements $ \om^1, \he^2 \in A_{i+2}$ and we can iterate the process replacing the elements $\te, \al , \be$ with $ \om^1, \te, \he^2 $. If $ \Delta_1(\al) $ is eventually contained in $\e+S$, after a finite number of iteration we will find an element $\om^k$ which is maximal in $ \Delta_1(\al)  \cap \Ap(S)$ and it is dominated by some element of a greater level of $\Ap(S)$ (notice that in this case the elements in $ \Delta_1(\al) $ are dominated by elements in $A_N$). This would lead to a contradiction like in the previous paragraph.
If $ \Delta_1(\al) \cap (\e+S) $ has a maximum $\de$, then $ \Delta^S(\de) \subseteq \Ap(S) $ by (1) and our iterative process will end, by replacing the name of the elements and of the levels, in a situation with $\al =  \te \wedge \be$ with $\te, \be \in A_{i+1}$ and $\te > \de > \al$. In this setting, we can find a minimal element $\om \in \Delta_2(\de) \cap \Ap(S)$. By (3), $\om \in A_j$ with $j \leq i+1$ and this again contradicts the assumption since $ \om \gg \al. $

\noindent (5) Assume $ \Delta^S_2(\al) \nsubseteq \Ap(S) $. Hence by (4), we have that the minimal element $\om$ of $ \Delta_1^S(\al) $ is also in $A_i$. Hence $\om < \be$. If again $ \Delta^S_2(\om) \nsubseteq \Ap(S) $ we find an element $ \om^1 \in \Delta_1^S(\al) \cap A_i $ and $ \om^1 < \be $. Since there is a finite number of elements in $ \Delta_1^S(\al) $ between $\al$ and $\be$, the process must stop to an element $\te \in \Delta_1^S(\al) \cap A_i$ such that $\te < \be$ and $ \Delta^S(\te) \subseteq \Ap(S) $.
\end{proof}

\medskip

\begin{theorem} \label{livelli}
Let $S \subseteq \N^2$ be a good semigroup and let $\boldsymbol{e}=(e_1, e_2)$ be its minimal non zero element. Let $\Ap(S)=\bigcup_{i=1}^N A_{i}$ where the sets $A_{i}$ are defined as above.
Then $N=e_1+e_2$.
\end{theorem}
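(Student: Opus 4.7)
The plan is to prove $N = e_1 + e_2$ by exploiting Remark \ref{multiplycity}, which establishes that $e_1+e_2 = d(S \setminus (\boldsymbol{e}+S))$; it then suffices to identify $N$ with this distance. I would fix any $\boldsymbol{\theta} \geq \boldsymbol{e}+\boldsymbol{c}$, so that $\boldsymbol{\theta}$ lies above the conductors of both $S$ and $\boldsymbol{e}+S$, and use the identity $d(S\setminus(\boldsymbol{e}+S)) = d_S(\boldsymbol{0},\boldsymbol{\theta}) - d_{\boldsymbol{e}+S}(\boldsymbol{e},\boldsymbol{\theta})$ as the bridge between the combinatorial number of levels and the distance.

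The heart of the argument is to construct a saturated chain $\boldsymbol{0} = \xi_0 < \xi_1 < \cdots < \xi_L = \boldsymbol{\theta}$ in $S$ that visits exactly one representative of each level $A_1, A_2, \ldots, A_N$ of $\Ap(S)$ before passing into $\boldsymbol{e}+S$ and continuing to $\boldsymbol{\theta}$. Starting from $\boldsymbol{0} \in A_1$, I would iteratively use Lemma \ref{basic-Ap}(6) to select a point of $A_{i+1}$ consecutive in $\Ap(S)$ to the currently chosen point of $A_i$. These Apéry-consecutive points need not be consecutive in $S$, but Lemma \ref{altro-Ap}(3)--(5) describe precisely how the level of an Apéry element relates to the levels of nearby elements, and so they allow one to refine the level-chain to a saturated $S$-chain by inserting appropriate elements from $\boldsymbol{e}+S$ in the gaps. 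Once a representative of $A_N$ is reached, Remark \ref{rem1} places it on the ray $\Delta(\boldsymbol{\gamma}+\boldsymbol{e})$, from which the chain can be extended through $\boldsymbol{e}+S$ to $\boldsymbol{\theta}$ without visiting further Apéry elements.

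The conclusion then follows by comparing the length of this $S$-chain with the length of a saturated chain in $\boldsymbol{e}+S$ from $\boldsymbol{e}$ to $\boldsymbol{\theta}$: the additional steps taken in the $S$-chain to cross each level produce exactly $N$ extra units of length over the $\boldsymbol{e}+S$-chain, because the chain is engineered so that every "level increment" corresponds to exactly one extra step not seen in any saturated chain of $\boldsymbol{e}+S$. Combining with $e_1+e_2 = d_S(\boldsymbol{0},\boldsymbol{\theta}) - d_{\boldsymbol{e}+S}(\boldsymbol{e},\boldsymbol{\theta})$ gives $N = e_1+e_2$.

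The main obstacle is the precise execution of the chain construction above, in particular the bookkeeping between the Apéry portion and the $\boldsymbol{e}+S$ portion, so as to guarantee that each of the $N$ levels contributes exactly one step to the difference of distances (no more, no less). The finer geometric control provided by Lemma \ref{altro-Ap} --- especially the statements that the minimal element of $\Delta_2^S(\boldsymbol{\alpha})$ sits in the same level as $\boldsymbol{\alpha}$ under appropriate hypotheses, and the control on how levels propagate along horizontal and vertical lines given by Lemma \ref{basic-Ap}(5), (7), (8) --- is the technical workhorse that makes this counting rigorous.
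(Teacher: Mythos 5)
Your proposal correctly reproduces one half of the argument, but only that half. The chain you build — starting from $\boldsymbol{0}$, using Lemma \ref{basic-Ap}(6) to pick one element of each level consecutive in $\Ap(S)$ to the previous one, and then saturating in $S$ (which indeed only inserts elements of $\boldsymbol{e}+S$) — is exactly the paper's construction for the inequality $N\geq e_1+e_2$: deleting the $N$ Ap\'ery elements leaves a chain in $\boldsymbol{e}+S$ from $\boldsymbol{e}$ to $\boldsymbol{\gamma}+\boldsymbol{e}+\boldsymbol{1}$, whose length is at most $d_{\boldsymbol{e}+S}(\boldsymbol{e},\boldsymbol{\gamma}+\boldsymbol{e}+\boldsymbol{1})$, so $d_S-N\leq d_{\boldsymbol{e}+S}$ and $e_1+e_2\leq N$. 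The gap is in your final counting claim that "every level increment corresponds to exactly one extra step not seen in any saturated chain of $\boldsymbol{e}+S$." That assertion amounts to saying the deleted chain is \emph{saturated} in $\boldsymbol{e}+S$, which does not follow from your construction: if $\boldsymbol{\mu}<\boldsymbol{\alpha}<\boldsymbol{\nu}$ are consecutive in the $S$-chain with $\boldsymbol{\alpha}\in\Ap(S)$, there may well be elements of $\boldsymbol{e}+S$ strictly between $\boldsymbol{\mu}$ and $\boldsymbol{\nu}$ that are incomparable with $\boldsymbol{\alpha}$, so after deleting $\boldsymbol{\alpha}$ the chain need not be saturated and you only get an inequality, not equality.

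The reverse inequality $N\leq e_1+e_2$ is precisely what the paper spends most of its proof on, and it requires a genuinely different construction: one builds a \emph{saturated chain in $\boldsymbol{e}+S$} from $\boldsymbol{e}$ to $\boldsymbol{\gamma}+\boldsymbol{e}+\boldsymbol{1}$ and shows that its saturation in $S$ meets \emph{every} level $A_1,\dots,A_N$ at least once; then the number of Ap\'ery elements inserted is exactly $d_S-d_{\boldsymbol{e}+S}=e_1+e_2$ and is at least $N$. Guaranteeing that no level is skipped while keeping the $\boldsymbol{e}+S$ part saturated is the hard content: it is the long case analysis (cases A and B, with subcases such as A.3, where axiom (G2) is used to produce a point of $\boldsymbol{e}+S$ on the same horizontal line, together with Lemma \ref{altro-Ap}(2),(4) and Lemma \ref{basic-Ap}(4),(5),(6) to rule out skipping or repeating levels, and occasionally rebuilding the tail of the chain by taking infima with a suitable $\boldsymbol{\delta}\in A_{j+1}$). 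You explicitly flag this bookkeeping as "the main obstacle" but do not resolve it, and it cannot be waved away: it is not a refinement of the first chain but a separate argument, and without it your proof establishes only $N\geq e_1+e_2$.
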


\begin{proof}
We have that $\Ap(S)=S\setminus (\boldsymbol{e}+S)$. Moreover both $S$ and $\boldsymbol{e}+S$
are good ideals so we can compute the distance function $d(S\setminus \boldsymbol{e}+S)$ as
$d_S(\boldsymbol{0},\g+\boldsymbol{e}+\boldsymbol{1})-d_{\boldsymbol{e}+S}(\boldsymbol{e}, \g+\boldsymbol{e}+\boldsymbol{1})$;
on the other hand we know that $d(S\setminus \boldsymbol{e}+S)=e_1+e_2$.

Hence, to prove that $N\geq e_1+e_2$ we show that there exists a saturated chain in $S$, 
between $\boldsymbol{0}$ and $\g+\boldsymbol{e}+\boldsymbol{1}$ that contains exactly one element of every level $A_i$: if we do not consider the $N$ elements of $\Ap(S)$ in this chain we get a chain 
(not necessarily saturated) in $\boldsymbol{e}+S$; hence $d_S(\boldsymbol{0},\g+\boldsymbol{e}+\boldsymbol{1})-N
 \leq d_{\boldsymbol{e}+S}(\boldsymbol{e}, \g+\boldsymbol{e}+\boldsymbol{1})$, that means $$e_1+e_2=d_S(\boldsymbol{0},\g+\boldsymbol{e}+\boldsymbol{1})-d_{\boldsymbol{e}+S}(\boldsymbol{e}, \g+\boldsymbol{e}+\boldsymbol{1})\leq N.$$
To construct such a chain we start from $\boldsymbol{0}\in S \cap \Ap(S)$ and then we choose
$N$ elements, one for each level $A_i$ using property (6) of Lemma \ref{basic-Ap}: given $\al_i \in A_i$
we choose $\al_{i+1} \in A_{i+1}$ consecutive to $\al_i$ in $\Ap(S)$;
so we get a chain of $N$ elements in $\Ap(S)$, each one consecutive to the previous in $\Ap(S)$.
Hence when we saturate this chain in $S$, we add only elements in $S\setminus \Ap(S)=\boldsymbol{e}+S$
and we obtain the desired chain.

\medskip 

In order to prove that $N \leq e_1+e_2$ we want to construct a saturated chain in $\boldsymbol{e}+S$
between $\boldsymbol{e}$ and $\g+\boldsymbol{e}+\boldsymbol{1}$
such that, when we saturate it in $S$, as a chain between $\boldsymbol{0}$ and $\g+\boldsymbol{e}+\boldsymbol{1}$, we use at least one element for every level $A_i$
(it is clear that we can only add elements of $\Ap(S)=S\setminus (\boldsymbol{e}+S)$:
in fact, if we add $n\geq N$ elements in $\Ap(S)$, this would imply $d_S(\boldsymbol{0},\g+\boldsymbol{e}+\boldsymbol{1})= d_{\boldsymbol{e}+S}(\boldsymbol{e}, \g+\boldsymbol{e}+\boldsymbol{1})+n \geq d_{\boldsymbol{e}+S}(\boldsymbol{e}, \g+\boldsymbol{e}+\boldsymbol{1})+N$, that is 
$N \leq d_S(\boldsymbol{0},\g+\boldsymbol{e}+\boldsymbol{1})- d_{\boldsymbol{e}+S}(\boldsymbol{e}, \g+\boldsymbol{e}+\boldsymbol{1})=e_1+e_2$.

To construct such a chain we start with $\boldsymbol{0} \ll \boldsymbol{e}$, that is
a saturated chain in $S$; hence we can assume that we have
constructed a saturated chain in $S$, say $\al_0=\boldsymbol{0}<\al_1< \dots <\al_h$,
such that, $\al_h \leq \g+\boldsymbol{e}+\boldsymbol{1}$, if we delete the elements $\al_i \in \Ap(S)$ we get a saturated chain in $\boldsymbol{e}+S$ and every level $A_1, \dots, A_j$ has at least one representative in it.
To apply a recursive argument we need either to stretch the chain adding one or more new elements,
the first consecutive to 
$\al_h$ in $S$ and any of the others consecutive to the previous one,
or to produce a new chain with the same properties (replacing the last elements of the constructed chain) for which all the levels $A_1, \dots, A_{j+1}$ have at least one representative in it.
The process will end, since the length of a saturated chain is bounded 
by $d_S(\boldsymbol{0},\g+\boldsymbol{e}+\boldsymbol{1})$ (and at the last step we have to
touch $A_N$) and the number of levels is bounded by $N$. We explain now how to add a new element to the chain in all the different possible cases. Before starting, we observe that if we have $ \g + \e + 1 \in \Delta^S(\al_h) $ we complete the chain adding all the elements between $ \al_h $ and $ \g + \e + 1 $ sharing a coordinate with them (they are all on the same horizontal or on the same vertical line). By Lemma \ref{basic-Ap}(7 and 8) this chain will touch exactly once all the levels of $\Ap(S)$ between $j+1$ and $N$. 

Hence we consider now all the cases in which $\al_h \ll \g+\e+1.$
We can start from one of the
following two cases: either (case A) $\al_{h} \in \boldsymbol{e}+S$ or 
(case B) $\al_h \in A_j$ 
(notice that at the beginning of the chain we have $\boldsymbol{0} \ll \boldsymbol{e}$, 
so we start from case A). 

In both cases if $\al_h$ has only one consecutive element $\be$ in $S$, necessarily (by axiom (G2))
we have $\al_{h} \ll \be$ and
we should be forced to choose $\al_{h+1}=\be$.  If $\be \in \boldsymbol{e}+S$,
the new chain obviously satisfy the requested properties, has one element more and now we are in case A.
On the other hand, assume $\be \in \Ap(S)$; the condition
$\al_{h} \ll \be$ implies that $\al_{i} \ll \be$, for all $i=0, \dots, h$; in particular, let 
$\al_r \in  A_j$ be the last element of the Ap\'ery Set in the chain. Hence, by Remark \ref{rem1}
we know that $\be \in A_{j+1} \cup \dots \cup A_N$; if it is in $A_{j+1}$, we simply add $\be$ to the chain and proceed in case B.
Otherwise, if $\be$ it is not in $A_{j+1}$, by Lemma \ref{basic-Ap}(6) there exists another 
element $\de$ consecutive in $\Ap(S)$ to $\al_r$, such that $\de \in A_{j+1}$
(notice that, by Lemma \ref{basic-Ap}(5), this situation can happen only in case A).
Now $\de$ has to share a component with one of the elements $\al_r, \dots, \al_h$
otherwise, taking infimums, it would create a new point that makes the original chain non-saturated; 
more precisely, if it is above (respectively, on the right) of some $\al_l$ for $r \leq l \leq h$, then $\al_{l+1}$ has to be either above or on the right of $\al_l$.
Now we change the chain substituting $\al_m$ with $\de \wedge \al_m$, for every $m \geq l+1$;
successively we add to the chain $\de \wedge \be$ and all the other elements of $S$ 
on the vertical (respectively, horizontal) line until we reach $\de$ (notice that we may have $\de \wedge \al_m = \de$ for some $m$ and in that case we simply stop to $\de$ because we have reached it). We show an example of the preceding process in the first picture of Figure \ref{mini1}.

%\begin{picture}(380, 80)(0,40)
%\put(60,20){\circle*{6}} \put(60,60){\circle*{6}}\put(80,15){$\Rightarrow$}
%\put(120,20){\rc \circle*{6}}
%\put(300,20){\circle*{6}} \put(340,20){\circle*{6}}\put(294,45){$\Uparrow$}
%\put(300,80){\rc \circle*{6}}
%\end{picture}\\

Hence we created a new chain with the requested 
properties, such that every level $A_1, \dots, A_{j+1}$
has at least one representative in it and now we are in case B.

It remains to study what happens if $\al_h = \be_1 \wedge \be_2$ 
(where both $\be_i$ are consecutive to $\al_h$ in $S$).

\noindent $\bullet$ Case A. \\
In this case let $\al_r \in  A_j$ be the last element of the Ap\'ery Set in the chain. The following situations can occur: \\
A.1) both $\be_i \in \Ap(S)$: if at least one $\be_i$ belongs to $A_{j+1}$, we set $\al_{h+1}:= \be_i$ and we switch to
case B. Otherwise, since $\al_r \ll \be_i$, for at least one $i=1,2$, we have, for the same $i$, $\be_i \in A_{j+2}\cup \dots \cup A_N$. By Lemma \ref{basic-Ap}(6), there exists another element $\de$ consecutive in $\Ap(S)$ to $\al_r$, such that $\de \in A_{j+1}$. Using the same argument as above, i.e. replacing the last part of the chain with $\de \wedge \al_m$ (for $ r < m \leq h$) and then proceeding on a single line until reaching $\de$, we obtain the desired result and we switch to case B; \\
%Otherwise, let $\al_r \in  A_j$ be the last element of the Ap\'ery Set in the chain; if $\al_r \ll \be_i$,for both $i=1,2$  this means that $\be_i \in A_{j+2}\cup \dots \cup A_N$. By Property 6 of the previous lemma, there exists another element $\de$ consecutive in $\Ap(S)$ to $\al_r$, such that $\de \in A_{j+1}$. Using the same argument as above, i.e. replacing the last part of the chain with $\de \wedge \al_m$, we obtain the desired result and we switch to case B;\\ 
A.2) both $\be_i \in \boldsymbol{e}+S$: we can move to any one of them indifferently and proceed in case A;\\
A.3) $\be_1 \in \Ap(S)$ and $\be_2 \in \boldsymbol{e}+S$: if $\be_1 \in A_j$ (this can happen only if the last element of the Ap\'ery Set in the chain $\al_r$ shares a component with both $\al_h$ and $\be_1$), we set $\al_{h+1}=\be_2$ and we proceed in case A; if $\be_1 \in A_{j+2}$, we take again $\de$ consecutive in $\Ap(S)$ to $\al_r$ such that $\de \in A_{j+1}$ and, replacing the last part of the chain with $\de \wedge \al_m$ (the elements $ \al_m$ are defined as in A.1), we obtain the desired result and we switch to case B.

It remains the hardest situation, i.e. $\be_1 \in A_{j+1}$; in this case we have to show that both $\be_1$ and $\be_2$ do not have the same element as unique consecutive in $S$; 
if this was the case and if $\be_1$ was the only 
consecutive of $\al_r$  in $\Ap(S)$ belonging to $A_{j+1}$, it would be impossible to proceed,
because either we would skip the level $A_{j+1}$ or we would create a chain such that, if we delete the elements of $\Ap(S)$, we do not get a saturated chain in $\boldsymbol{e}+S$ (for this situation, see the second picture of figure \ref{mini1}, in which we denote by $\te$ the unique consecutive element in $S$ of both $\be_1$ and $\be_2$ and we consider the two possible chains between $\al_h$ and $\te$).

\begin{figure}[h]
\setlength{\unitlength}{0.4mm}
\begin{center}
\begin{picture}(90,100)(90,10)
\put(-10,0){\line(1,0){100}}
\put(-10,0){\line(0,1){100}}
%\multiput(-10,0)(60,70){2}{\circle*{3}}
%\multiput(50,60)(10,0){2}{\circle*{3}}
%\multiput(80,90)(10,0){3}{\circle*{3}}
%\multiput(80,100)(10,0){3}{\circle*{3}}
\put(10,20){$\circ$}
\put(10,16){ \tiny $\al_r$}
\put(20,40){$\bullet$}
\put(16,36){ \tiny $\al_{r+1}$}
\put(20,60){$\bullet$}
\put(3,55){ \tiny $\al_{h-1} \wedge \de$}
\put(20,80){$\bullet$}
\put(3,75){ \tiny $\al_{h} \wedge \de$}
\put(20,90){$\circ$}
\put(12,90){ \tiny $\de$}
\put(50,80){$\bullet$}
\put(45,75){ \tiny $\al_{h}$}
\put(40,40){$\bullet$}
\put(38,36){ \tiny $\al_{r+2}$}
\put(50,60){$\bullet$}
\put(45,55){ \tiny $\al_{h-1}$}
\put(70,60){$\bullet$}
\put(70,90){$\circ$}
\put(65,86){ \tiny $\be$}
\thicklines
%\multiput(110,90)(10,0){4}{\circle*{3}}
%\multiput(90,120)(0,10){4}{\circle*{3}}
\thinlines
%\put(110,120){\line(1,0){30}}
%\put(110,120){\line(0,1){30}}
%\put(115,120){\line(-1,1){5}}
%\put(120,120){\line(-1,1){10}}
%\put(125,120){\line(-1,1){15}}
%\put(130,120){\line(-1,1){20}}
%\put(135,120){\line(-1,1){25}}
%\put(140,120){\line(-1,1){30}}
%\put(140,125){\line(-1,1){25}}
%\put(140,130){\line(-1,1){20}}
%\put(140,135){\line(-1,1){15}}
%\put(140,140){\line(-1,1){10}}
%\put(140,145){\line(-1,1){5}}
%\put(49,-8){6}
%\put(59,-8){7}
%\put(79,-8){9}
%\put(105,-8){12}
%\put(-16,58){6}
%\put(-16,68){7}
%\put(-16,88){9}
%\put(-19,118){12}
%\put(-10,-19){Fig.~1. A picture of $S$.}
\put(180,0){\line(1,0){100}}
\put(180,0){\line(0,1){100}}
%\multiput(180,0)(30,40){2}{\circle*{3}}
\put(230,20){$\circ$}
\put(230,16){ \tiny $\be_1$}
\put(200,20){$\bullet$}
\put(200,16){ \tiny $\al_{h}$}
\put(200,40){$\bullet$}
\put(200,36){ \tiny $\be_2$}
\put(250,50){$\bullet$}
\put(250,46){ \tiny $\te$}
\put(260,60){$\bullet$}
%\multiput(210,30)(10,0){3}{\circle*{3}}
%\put(240,60){\line(1,0){30}}
%\put(240,60){\line(0,1){30}}
%\put(245,60){\line(-1,1){5}}
%\put(250,60){\line(-1,1){10}}
%\put(255,60){\line(-1,1){15}}
%\put(260,60){\line(-1,1){20}}
%\put(265,60){\line(-1,1){25}}
%\put(270,60){\line(-1,1){30}}
%\put(270,65){\line(-1,1){25}}
%\put(270,70){\line(-1,1){20}}
%\put(270,75){\line(-1,1){15}}
%\put(270,80){\line(-1,1){10}}
%\put(270,85){\line(-1,1){5}}
%\put(209,-8){3}
%\put(219,-8){4}
%\put(239,-8){6}
%\put(174,28){3}
%\put(174,38){4}
%\put(174,58){6}
%\put(180,-19){Fig.~2. A picture of $S-M$.}
\end{picture}
\end{center}
\caption{\scriptsize Explanation of parts of the proof.  
 } 
\label{mini1}
\end{figure}
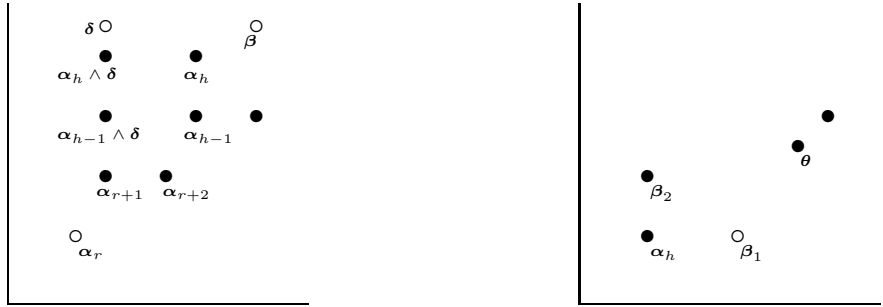

%\bigskip

Let assume that $\al_h$ and $\be_2$ share the first component (the other case is symmetric):
So $\al_h=(a_1,a_2)$ and $\be_2=(a_1, b_2)$, with $b_2 > a_2$. They do not belong to $\Ap(S)$
hence both $\al_h-\boldsymbol{e}$ and $\be_2-\boldsymbol{e}$ belong to $S$. Hence, by Property (G2),
there must be an element $(c_1, a_2-e_2) \in S$ and so we have also another point of $S$,
$\de=(c_1+e_1, a_2) \in \e+S$, on the right of $\al_h$. Since $\be_1$ is also on the right of $\al_h$ 
and it is consecutive to it in $S$, we have three points in the same horizontal line:
$\al_h < \be_1 < \de$. Now, choosing $\de$ minimal, we are sure that it is consecutive to $\al_h$
in $\boldsymbol{e}+S$. Moreover, we are sure that
moving from $\be_1$ to $\de$ on the horizontal line, if we meet points of $\Ap(S)$, by Lemma \ref{basic-Ap}(5) we do not skip 
any level and by Lemma \ref{altro-Ap}(4) and Lemma \ref{basic-Ap}(4) we do not repeat twice the same level. Hence we can stretch the chain up to $\de$ and proceed in case A. %\bf credo che, aggiungendo il fatto che muovendoti li orizzontalmente non ripeti mai lo stesso livello (perche' sopra invece hai gli stessi livelli a causa di $\de$ presente a destra), tutto ciò vada d'accordo con il caso B.3! \rm

%it follows that there exists at least one point of $S$ above $\be_1$; let us denote by $\te$ the consecutive %to $\be_1$ on this vertical line, that, by construction has to be consecutive in $S$ also
%to $\be_2$; since $\be_1-\boldsymbol{e} \notin S$, 
%$\te \in \Ap(S)$. If $\te \in A_{j+1}$, we are done, choosing $\al_{h+1}=\be_2$ and $\al_{h+2}=\te$ (and we %switch to case B). If $\te \in A_{j+2}$, two situations can occur: $\be_1=\te \wedge \he_1$, with $\he_1 \in %A_{j+2}$ or there exists $\he' \in A_{j+2}$, $\be_1 \ll \he'$. In the first subcase, if $\al_h < \be_1 < \he_1 <\de$,
%we can repete the previous arguments to get a $\te' \in \Ap(S)$ above $\he_1$; if $\te' \in A_{j+2}$

\noindent $\bullet$ Case B. We notice that, in this case, if $\be_i\in \Ap(S)$ it has to belong either to $A_j$ or to $A_{j+1}$, since they are consecutive  in $S$ to $\al_h \in \Ap(S)$ (again by Lemma \ref{basic-Ap}(5)).\\
The following situations can occur: \\
B.1) both $\be_i \in A_{j+1}$: we can move to any one of them indifferently and proceed in case B;\\
B.2) both $\be_i \in \boldsymbol{e}+S$: this cannot happen by Lemma \ref{altro-Ap}(2); \\
B.3) $\be_1 \in A_j$ and $\be_2 \in A_{j+1}$: we move to $\be_2$ and proceed in case B;\\
B.4) $\be_1 \in A_j$ and $\be_2 \in \boldsymbol{e}+S$: we move to $\be_2$ and switch to case A;\\
B.5) $\be_1 \in A_{j+1}$ and $\be_2 \in \boldsymbol{e}+S$: we can assume that $\be_1$ is 
on the right of $\al_h$ and $\be_2$ is above it; in this case 
either there is another 
element $\be' \in A_{j+1}$ such that $\al = \be_1 \wedge \be'$ and $\al <\be_2 < \be' $ share the first
component; we choose $\be'$ minimal with this property and we move to $\be_2$ and then to $\be'$ (considering all possible elements between them, that have to belong to $\boldsymbol{e}+S$)
or there exists $\be'' \in A_{j+1}$ above $\be_1$ and consecutive to both $\be_i$; so we move to 
$\be_2$ and then to $\be''$. In both cases we proceed in case B.

The proof is complete.
%\bf Cosi' mi sembra sensato e funzionante: controllare bene che non abbiamo perso nessun caso!!!!
%il caso B.3 va bene? Forse il tutto si puo' semplificare? \rm
\end{proof}

From now on we are going to denote the number of levels of the Ap\'{e}ry Set by $e= e(S)= e_1 + e_2$, that as we noticed in the previous section,
coincides with the multiplicity of the ring $R$, in case $S=v(R)$.

We derive from the proof of the preceding theorem, a sort of converse of property (1) of Lemma \ref{basic-Ap}. We are going to make use of this next result in the last section of this article, while proving a duality property for the levels of the Ap\'{e}ry Set of a symmetric good semigroup.

\begin{prop} 
\label{caso3} 
Let $S \subseteq \N^2$ be a good semigroup and let $\Ap(S)=\bigcup_{i=1}^e A_{i}$ be its Ap\'{e}ry Set. Let $\al \in A_i$ for $i \geq 2$. Then, there exists $\be \in A_{i-1}$ such that $\be \leq \al.$
\end{prop}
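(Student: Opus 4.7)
The plan is to construct a saturated chain in $S$ from $\boldsymbol{0}$ to $\al$ that contains a representative of each of the levels $A_1, A_2, \ldots, A_i$. Since the elements of a saturated chain in $S$ are linearly ordered by $\leq$, the chain's representative of $A_{i-1}$ is then automatically $\leq \al$, and so provides the required $\be$.

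To construct such a chain I would adapt the forward chain construction from the proof of Theorem \ref{livelli}. Starting from $\boldsymbol{0} \in A_1$, one extends a saturated chain in $S$ one step at a time using the case analysis of that proof (cases A and B together with their sub-cases). The chain touches levels in order: whenever the current element lies in $A_j$, Lemma \ref{basic-Ap}(6) guarantees the existence of an element in $\Ap(S)$ consecutive to it lying in $A_{j+1}$, and the argument of Theorem \ref{livelli} shows how to incorporate such consecutive elements into the saturated chain (possibly after modifying the last few elements via the infimum-replacement moves employed there). The only modification with respect to that theorem is that, instead of terminating the construction at $\g + \e + \boldsymbol{1}$, we terminate it at $\al$, so that $\al$ itself plays the role of the $A_i$-representative.

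The main obstacle is to ensure that the construction can be steered to terminate at the specified element $\al$. Here the flexibility provided by Lemma \ref{basic-Ap}(6) is essential: at each step the chain admits several possible extensions, and a careful use of these choices, combined with the infimum-replacement technique described in the sub-cases of Theorem \ref{livelli} (for instance cases A.1 and A.3), allows us to route the chain through $\al$. Once $\al$ appears in the chain as the $A_i$-representative, the Ap\'ery representative of $A_{i-1}$ in the chain gives the desired $\be \leq \al$, and the proposition follows.
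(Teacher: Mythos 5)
The crucial step of your plan is unjustified, and it is exactly the point where the whole difficulty of the statement lies. The forward machinery of Theorem \ref{livelli} (Lemma \ref{basic-Ap}(6) plus the infimum-replacement moves) only guarantees that from the current element one can reach \emph{some} element of the next level somewhere in $\Ap(S)$; it gives no control whatsoever on where that element sits relative to a prescribed target $\al$. In Theorem \ref{livelli} this is harmless because the endpoint is $\g+\e+\boldsymbol{1}$, which dominates every element of $\Ap(S)$, so no forward move can overshoot. With endpoint $\al$ the situation is completely different: the consecutive element of $A_{j+1}$ produced by Lemma \ref{basic-Ap}(6), or the element $\de$ used in the replacement moves of cases A.1/A.3, need not be $\leq \al$, and if no element of $A_{i-1}$ lies below $\al$ --- which is precisely the situation you must exclude --- then no amount of ``steering'' can route the chain through $\al$ while touching $A_{i-1}$. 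So the sentence ``a careful use of these choices \dots allows us to route the chain through $\al$'' is not a proof step: the existence of a saturated chain from $\boldsymbol{0}$ to $\al$ meeting every level $A_1,\dots,A_i$ is a statement at least as strong as the proposition itself (given the proposition, one gets it by descending induction; without it, you have no way to start the descent below $\al$). Your argument is therefore circular at its core.

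For comparison, the paper proves the statement by contradiction together with a counting argument based on the distance function. Assuming no element of $A_{i-1}$ lies below $\al$ (and taking $\al$ minimal in $A_i$), one looks at an element $\om\in\Ap(S)$ below and consecutive to $\al$ in $\Ap(S)$; its level $j$ satisfies $j<i-1$, and after descending to a suitable minimal element of $A_j$ one builds, by induction, a saturated chain in $S$ from $\boldsymbol{0}$ to $\al$ containing one Ap\'ery element per level $1,\dots,j$ and none in the levels strictly between $j$ and $i$; extending it upward to $\g+\e+\boldsymbol{1}$ as in the first part of Theorem \ref{livelli}, the full chain contains only $h=e-(i-j)+1<e$ elements of $\Ap(S)$. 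Removing them leaves a chain in $\e+S$ of length $d_S(\boldsymbol{0},\g+\e+\boldsymbol{1})-h>d_{\e+S}(\e,\g+\e+\boldsymbol{1})$, which is impossible. Some argument of this quantitative kind (or another genuine replacement for it) is what your proposal is missing.
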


\begin{proof}
Since $A_1= \lbrace \boldsymbol{0} \rbrace$, and $\al \geq \boldsymbol{0}$ for every $\al \in S$, the thesis is true for $i = 2$ and hence, by induction, we can assume it true for every $j < i.$ Assume by way of contradiction that there exists $\al \in A_i$ such that $\te \not \leq \al$ for every $\te \in A_{i-1}$. We can further assume that also $\de \not \leq \al$ for every $\de \in A_{i}$, otherwise we can simply replace $\al$ with some element $\de \leq \al$ and minimal in $A_i$ with respect to ''$\leq $''. 

Take $\om \in \Ap(S)$ such that $\om \leq \al$ and they are consecutive in $ \Ap(S) $, hence $\om \in A_j$ with $j < i-1.$ We may assume $j$ to be the maximal level of an element of $ \Ap(S) $ having $\al$ as a consecutive element in $ \Ap(S) $. Assume there exists an element $ \om^{\prime} \in A_j $ such that $\om \in \Delta^S(\om^{\prime})$. Hence, we can find a saturated chain in $S$ between $ \om^{\prime} $ and $\al$ that does not contain any other elements of $\Ap(S)$. Indeed, we can find $\be \in S$ such that $\om^{\prime} \leq \be \leq \al$, $\be \in \Delta^S(\om^{\prime})$ and it is incomparable with $\om$ (i.e. either $\be \in \Delta^S_2(\om^{\prime}) $ and $\om \in \Delta^S_1(\om^{\prime})$ or the converse). If $\be \in \Ap(S)$, then it has to be in $A_j$, but this is impossible since $\om^{\prime}= \om \wedge \be$ would be the minimum of two elements of $A_j$, and therefore not in $A_j$.

It follows that we can choose an element $ \tilde{\om} \leq \om $ minimal with respect to the property of being in $A_j$ (this element could be $\om$ itself) and find a saturated chain in $S$ between $ \tilde{\om} $ and $\al$ not containing any other elements of $\Ap(S)$. By inductive hypothesis, $\tilde{\om} \geq \de \in A_{j-1}$, and hence we can iterate the preceding process and construct a saturated chain in $S$ between $\boldsymbol{0}$ and $\al$, containing only one element for every level of $\Ap(S)$ between $1$ and $j$ and not containing any element in the levels strictly between $j$ and $i$. As in the first part of the proof of Theorem \ref{livelli}, we can extend this chain adding a chain in $S$ from $\al$ to $\g - \e + \boldsymbol{1}$ including only one element for each level of $\Ap(S)$ greater than $i$. The obtained chain going between $\boldsymbol{0}$ and $\g +\e +\boldsymbol{1}$ contains $h := e - (i-j) + 1 $ elements of $\Ap(S)$, thus, removing those elements, we can find a chain in $\e+S$ between $\boldsymbol{e}$ and $\g +\e +\boldsymbol{1}$ of length $$ d_S(\boldsymbol{0}, \g +\e +\boldsymbol{1}) - h = d_{\e+S}(\boldsymbol{e}, \g +\e +\boldsymbol{1}) + e  - h. $$ Since $j < i-1$, this length is strictly bigger than $ d_{\e+S}(\boldsymbol{e}, \g +\e +\boldsymbol{1}) $ and this is a contradiction.
%Consider a saturated chain in $S$ between $\boldsymbol{0}$ and $\te$ and call it $ \mathcal{C} $. Suppose that $ \mathcal{C} $ contains $n$ elements of $\Ap(S)$. By Lemma \ref{basic-Ap}(6), there exists an element $\om \in A_{j+1}$ consecutive to $\te$ in $ \Ap(S) $ and observe that, by the assumption on $j$, $\om \not \leq \al$.
%Now, as in the first part of the proof of Theorem \ref{livelli} and using again several times Lemma \ref{basic-Ap}(6), we can construct a saturated chain in $S$ between $\al$ and $\g + \e +\boldsymbol{1}$ containing exactly one element in each level from $i$ to $e$. Call this chain $\mathcal{H}_1$. In the same way we construct a saturated chain in $S$ between $\om$ and $\g + \e +\boldsymbol{1}$ containing exactly one element in each level from $j+1$ to $e$ and we call it $\mathcal{H}_2$. The unions $ \mathcal{C} \cup \mathcal{H}_1 $ and $ \mathcal{C} \cup \mathcal{H}_2 $ are two saturated chains in $S$ between $\boldsymbol{0}$ and $\g +\e +\boldsymbol{1}$ but the first one contains $n + e - i +1$ elements of $ \Ap(S) $ while the second one contains $n + e - j$ elements of $ \Ap(S) $. Since $j < i-1$ and $d_{\boldsymbol{e}+S}(\boldsymbol{e}, \g+\boldsymbol{e}+\boldsymbol{1})$ is a fixed number, this leads to a contradiction.
\end{proof}

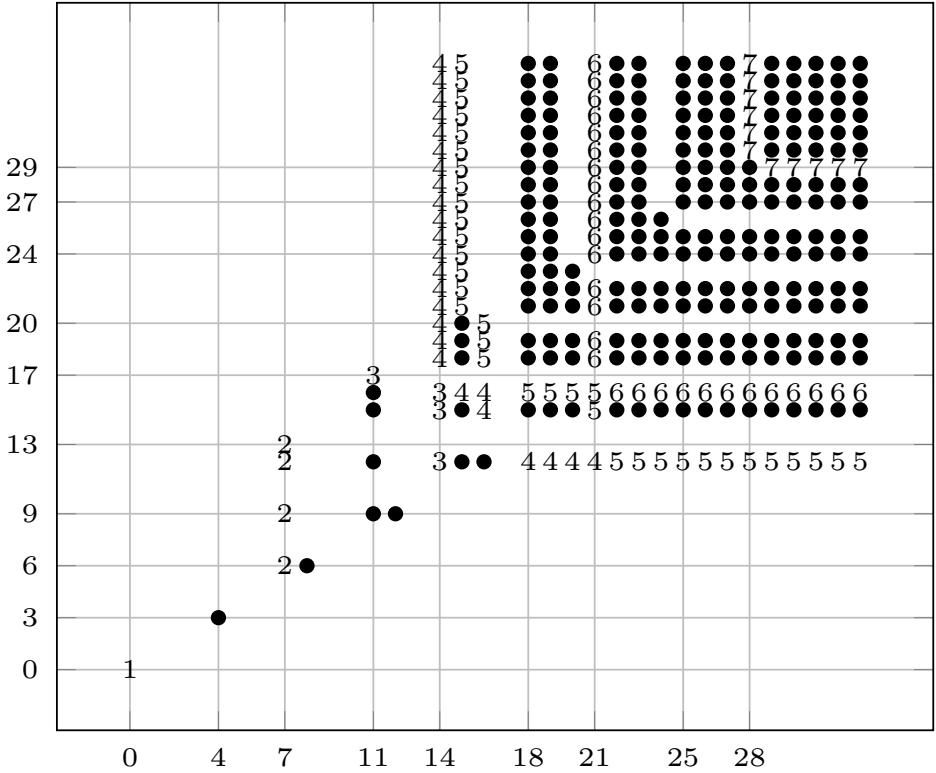
\begin{figure}[h] 
 
 \begin{tikzpicture}[scale=1.7]
	\begin{axis}[grid=major, ytick={0,3,6,9,13, 17, 20, 24, 27, 29}, xtick={0,4,7,11,14,18, 21, 25, 28}, yticklabel style={font=\tiny}, xticklabel style={font=\tiny}]
	
	%\addplot[ style=dotted, very thick]coordinates{ (};
	 %\addplot[ style=dotted, very thick]coordinates{ (29,18) (33,18)}; 
	 %\addplot[ style=dotted, very thick]coordinates{ (29,19) (33,19)}; 
	 %\addplot[ style=dotted, very thick]coordinates{ (29,18) (33,18)}; 
%\addplot[ style= dotted, very thick]coordinates{ (26,30) (26,35)}; \addplot[ style=dotted, very thick]coordinates{ (27,30) (27,35)};
%\addplot[ style=dotted, very thick]coordinates{ (29,21) (33,21)}; 
%\addplot[ style=dotted, very thick]coordinates{ (29,28) (33,28)}; 
 %\addplot[ style=dotted, very thick]coordinates{ (29,22) (33,22)}; 
 %\addplot[ style=dotted, very thick]coordinates{ (29,24) (33,24)}; 
 %\addplot[ style=dotted, very thick]coordinates{ (29,25) (33,25)}; 
 %\addplot[ style=dotted, very thick]coordinates{ (29,27) (33,27)}; 
 %\addplot[ style=dotted, very thick]coordinates{ (18,30)(18,31) (18,32) (18,33) (18,34) (18,35)}; 
 %\addplot[style=dotted, very thick]coordinates{ (19,30) (19,35)}; 
  % \addplot[ style=dotted, very thick]coordinates{ (22,30) (22,35)}; 
%\addplot[ style=dotted, very thick]coordinates{ (23,30) (23,35)}; 
%\addplot[ style=dotted, very thick]coordinates{ (25,30) (25,35)}; 
%\addplot [pattern = north east lines, draw=white]coordinates{ (29,30) (33,30) (33,35) (29,35) };

\addplot[only marks] coordinates{  (4,3)  (8,6) (11,9) (11,12) (11,15) (11,16) (12,9)  (15,12) (15,15)  (15,18)
 (15,19) (15,20) (16,12)    (18,15)  (18,18) (18,19) (18,21)
 (18,22) (18,23) (18,24) (18,25) (18,26) (18,27) (18,28) (18,29) (19,15)  (19,18) (19,19) (19,21) (19,22) (19,23) (19,24) (19,25) (19,26) (19,27) (19,28) (19,29) (20,15) 
 (20,18) (20,19) (20,21) (20,22) (20,23)  (22,15)  (22,18) (22,19)
 (22,21) (22,22) (22,24) (22,25) (22,26) (22,27) (22,28) (22,29) (23,15)  (23,18) (23,19) (23,21) (23,22) (23,24) (23,25) (23,26) (23,27) (23,28) (23,29) (24,15)  (24,18)
 (24,19) (24,21) (24,22) (24,24) (24,25) (24,26)  (25,15)   (25,18) (25,19) (25,21) (25,22) (25,24) (25,25) (25,27) (25,28) (25,29) (26,28) (26,29) (27,28) (27,29)  (28,28)(28,29) (26,27) (27,27) (28,27) (26,25) (27,25) (28,25) (26,24) (27,24) (28,24) (26,22) (27,22) (28,22) (26,21) (27,21) (28,21) (26,19) (27,19) (28,19) (26,18) (27,18) (28,18) (26,15) (27,15) (28,15) (29,15) (30,15) (31,15) (32,15) (33,15) (29,18) (30,18) (31,18) (32,18) (33,18) (29,21) (30,21) (31,21) (32,21) (33,21) (29,19) (30,19) (31,19) (32,19) (33,19)
 (29,22) (30,22) (31,22) (32,22) (33,22)
 (29,24) (30,24) (31,24) (32,24) (33,24)
 (29,25) (30,25) (31,25) (32,25) (33,25)
 (29,27) (30,27) (31,27) (32,27) (33,27)
 (29,28) (30,28) (31,28) (32,28) (33,28)
 (29,30) (30,30) (31,30) (32,30) (33,30)
 (29,31) (30,31) (31,31) (32,31) (33,31)
 (29,32) (30,32) (31,32) (32,32) (33,32)
 (29,33) (30,33) (31,33) (32,33) (33,33)
 (29,34) (30,34) (31,34) (32,34) (33,34)
 (29,35) (30,35) (31,35) (32,35) (33,35)
 (18,30)(18,31) (18,32) (18,33) (18,34) (18,35)
 (19,30)(19,31) (19,32) (19,33) (19,34) (19,35)
 (22,30)(22,31) (22,32) (22,33) (22,34) (22,35)
 (23,30)(23,31) (23,32) (23,33) (23,34) (23,35)
 (25,30)(25,31) (25,32) (25,33) (25,34) (25,35)
 (26,30)(26,31) (26,32) (26,33) (26,34) (26,35)
 (27,30)(27,31) (27,32) (27,33) (27,34) (27,35)}; 
 \addplot[only marks, mark=text, mark options={scale=1,text mark={ \tiny 7}}, text mark as node=true] coordinates{(28,30) (28,31) (28,32) (28,33) (28,34) (28,35) (29,29) (30,29) (31,29) (32,29) (33,29)}; %7
 \addplot[only marks, mark=text, mark options={scale=1,text mark={ \tiny 1}}, text mark as node=true] coordinates{(0,0)}; %1
 \addplot[only marks, mark=text, mark options={scale=1,text mark={ \tiny 2}}, text mark as node=true] coordinates{(7,6) (7,9) (7,12) (7,13)}; %2
 \addplot[only marks, mark=text, mark options={scale=1,text mark={ \tiny 3}}, text mark as node=true] coordinates{(11,17) (14,12) (14,15) (14,16)}; %3
 \addplot[only marks, mark=text, mark options={scale=1,text mark={ \tiny 6}}, text mark as node=true] coordinates{  (21,18) (21,19) (21,21) (21,22) (21,24) (21,25) (21,26) (21,27)(21,28) (21,29) (21,30) (21,31) (21,32) (21,33) (21,34) (21,35) (22,16) (23,16) (24,16) (25,16) (26,16) (27,16) (28,16) (29,16) (30,16) (31,16) (32,16) (33,16) }; %6
 \addplot[only marks, mark=text, mark options={scale=1,text mark={ \tiny 4}}, text mark as node=true] coordinates{(18,12)(19,12)(20,12)(21,12)(16,15) (16,16)(15,16)(14,18) (14,19) (14,20) (14,21) (14,22) (14,23) (14,24) (14,25) (14,26) (14,27) (14,28) (14,29) (14,30) (14,31) (14,32) (14,33) (14,34) (14,35) }; %4
 \addplot[only marks, mark=text, mark options={scale=1,text mark={ \tiny 5}}, text mark as node=true] coordinates{(21,15) (21,16) (22,12) (23,12) (24,12) (25,12) (26,12) (27,12) (28,12) (29,12) (30,12) (31,12) (32,12) (33,12) (18,16) (19,16) (20,16) (16,18) (16,19) (16,20) (15,21) (15,22) (15,23) (15,24) (15,25) (15,26) (15,27) (15,28) (15,29) (15,30) (15,31) (15,32) (15,33) (15,34) (15,35)}; %5
 \end{axis}
  \end{tikzpicture}
  \caption{\scriptsize This is an example of a good semigroup that is not the value semigroup of any ring, see \cite[Example 2.16, pag.8]{a-u}. %The levels of the Ap\'{e}ry Set are indicated by: $ A_1 = \diamond $, $ A_2 = \circ $, $ A_3 = \square $, $ A_4 = \bigtriangleup $, $ A_5 = \times $, $ A_6 = \circ $, $ A_7 = \diamond $.
  }
\label{semdanna}
\end{figure}

As we can observe in all the preceding examples of good semigroups, the first levels of $\Ap(S)$ are finite while the others contain either one or two infinite lines of elements. After formalizing the concept of infinite lines of elements in two definitions, we describe precisely this behavior in the next proposition.

\begin{definition} 
\label{d3} \rm
 Let $S \subseteq S_1 \times S_2 \subseteq \N^2$ be a good semigroup. 
 Given an element $s_1 \in S_1$, we say that $ \Delta_1(s_1,r) $ is an \it infinite line \rm of $S$ if there exists $r \in S_2$ such that $ \Delta_1(s_1, r) \subseteq S$. If $ \Delta_1(s_1, r) \subseteq \Ap(S)$, we say that $ \Delta_1(s_1,r) $ is an infinite line of $\Ap(S)$. 
 
 Analogously, given an element $s_2 \in S_2$, $\Delta_2(q,s_2)$ is an \it infinite line \rm of $S$ (resp. $\Ap(S)$) if there exists $q \in S_1$ such that $ \Delta_2(q, s_2) \subseteq S$ (resp. $\Ap(S)$).
 If an infinite line of $S$ is not an infinite line of $\Ap(S)$, then it is an infinite line of $\e+S.$
 %For $i,j= 1,2$, an element $s_1 \in S_1$ is called an \it infinite point \rm of $S_1$ with respect to $S$ if there exist $r^{\prime} \in S_2$ such that $(s_1, r) \in S$ for every $r \geq r^{\prime}$. Similarly, an element $s_2 \in S_2$ is called an \it infinite point \rm of $S_2$ with respect to $S$ if there exist $q^{\prime} \in S_1$ such that $(q,s_2)\in S$ for every $q \geq q^{\prime}$.
 %If $ s \in S_i$ is an infinite point, all the elements of the form $(s,r)$ or, respectively of the form $(q,s)$ which are eventually in $S$ are called the elements of the infinite point.  %for $q,r \gg 0 $, which are in $S$ are called elements of the infinite point. 
 %An infinite point of $ s \in S_i$ is contained in $Ap(S)$ if, eventually, all its elements are in $Ap(S)$. 
 \end{definition}

\begin{definition} 
\label{d4}
 \rm Let $S \subseteq S_1 \times S_2 \subseteq \N^2$ be a good semigroup and let $ \Ap(S)=\bigcup_{i=1}^e A_i $ be its Ap\'{e}ry Set. For $i=1, \ldots, e$, we say that: 
 \begin{enumerate}
 \item $A_i$ contains two infinite lines if there exist two elements $s_1 \in S_1$ and $s_2 \in S_2$, such that, for some $q \in S_1, r \in S_2$, $\Delta_1(s_1,r) $, $\Delta_2(q, s_2)$ are infinite lines of $\Ap(S)$ and they are contained in $A_i$.
 \item $A_i$ contains only one infinite line if only one of the previous conditions hold. %there exist either an infinite point $s_1 \in S_1$ or an infinite point $s_2 \in S_2$ such that $(s_1, r) \in A_i$ for every $ r \gg 0$ or, respectively $(q, s_2) \in A_i$ for every $q \gg 0$.
 \item $A_i$ is finite if it contains a finite number of elements.
 %We briefly call an infinite point $s_1 \in S_1$ (or $s_2 \in S_2$) such that $(s_1, r) \in A_i$ for every $r \gg 0$ (or $(q, s_2) \in A_i$ for every $q \gg 0$), an infinite point of $A_i$.
 \end{enumerate}
 \end{definition}

\begin{theorem}
\label{infiniti}
Let $S \subseteq S_1 \times S_2 \subseteq \N^2$ be a good semigroup, let $\e=(e_1, e_2)$ be its minimal non-zero element. Let $ \Ap(S)=\bigcup_{i=1}^e A_i $ be the Ap\'{e}ry Set of $S$. Assume $e_1 \geq e_2$. Then:
\begin{itemize}
 \item[(1)] The levels $A_e, A_{e-1}, \dots, A_{e-e_2+1}$ contain two infinite lines.
 \item[(2)] The levels $A_{e-e_2}, \dots, A_{e-e_1+1}$ contain only one infinite line of the form $ \Delta_1(s_1,r)  $ corresponding to some element $s_1 \in S_1$.
 \item[(3)] The levels $A_{e-e_1}, \dots, A_2, A_{1}$ are finite.
 \end{itemize}
 If $e_1 \leq e_2$ the correspondent analogous conditions hold.
\end{theorem}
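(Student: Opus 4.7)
The strategy is to count separately the infinite vertical lines and infinite horizontal lines of $\Ap(S)$, locate them on specific levels via Lemma \ref{basic-Ap}, and then argue that the remaining levels are finite. By Lemma \ref{basic-Ap}(7), each infinite vertical line of $\Ap(S)$ sits in a single level, and by Lemma \ref{basic-Ap}(8), two infinite vertical lines at consecutive first coordinates differ by exactly one level, with the smaller coordinate on the lower level. Since $A_e$ contains the topmost vertical line $\Delta_1(\g+\e)$ by Remark \ref{rem1}, the infinite vertical lines of $\Ap(S)$ occupy a consecutive block $A_e,A_{e-1},\dots,A_{e-v+1}$ where $v$ is their total number; the horizontal lines behave symmetrically and occupy $A_e,\dots,A_{e-h+1}$ for some $h$.

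The core step is to show $v=e_1$. Set $T_1:=\{\alpha_1\in\N \ :\ (\alpha_1,c_2)\in S\}$; by the remark following the definition of conductor, $T_1$ parametrises the first coordinates of infinite vertical lines of $S$, so that the infinite vertical lines of $\Ap(S)=S\setminus(\e+S)$ correspond bijectively to $T_1\setminus(T_1+e_1)$. Two properties of $T_1$ are key: it is cofinite in $\N$ (it contains $\{\gamma_1+1,\gamma_1+2,\dots\}$), and $T_1+e_1\subseteq T_1$. The latter I would obtain by combining $(\alpha,c_2)+\e\in S$ with the conductor property (when $\alpha+e_1>\gamma_1$) or with the (G1)-infimum $(\alpha+e_1,c_2+e_2)\wedge(\gamma_1+1,c_2)=(\alpha+e_1,c_2)$ (otherwise). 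With these in hand, each residue class $r$ mod $e_1$ meets $T_1$ in a set $\{a_r+ke_1:k\geq 0\}$ with $a_r:=\min(T_1\cap(r+e_1\N))$, so $T_1\setminus(T_1+e_1)=\{a_0,\dots,a_{e_1-1}\}$ has exactly $e_1$ elements. The symmetric argument gives $h=e_2$.

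Combining the counts with the block structure, assuming $e_1\geq e_2$, the levels with an infinite vertical line are $A_{e_2+1},\dots,A_e$ and those with an infinite horizontal line are $A_{e_1+1},\dots,A_e$. Their intersection $\{A_{e_1+1},\dots,A_e\}$ gives the $e_2$ two-line levels, the vertical-only levels are $\{A_{e_2+1},\dots,A_{e_1}\}$ ($e_1-e_2$ of them), and the remaining $e_2$ levels $\{A_1,\dots,A_{e_2}\}$ carry no infinite line. Finiteness of these last levels follows from Lemma \ref{basic-Ap}(7): absence of an infinite vertical line in $A_i$ bounds the second coordinate of every element of $A_i$ by $\gamma_2+e_2$, absence of a horizontal line bounds the first coordinate by $\gamma_1+e_1$, confining $A_i$ to a finite rectangle. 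The main technical point I expect to need care is the verification of $T_1+e_1\subseteq T_1$ and the clean identification of $T_1\setminus(T_1+e_1)$ with the infinite vertical lines of $\Ap(S)$; once these are settled, the rest is routine bookkeeping with Lemma \ref{basic-Ap}.
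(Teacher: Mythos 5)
Your proposal is correct and follows essentially the same route as the paper: count the infinite vertical (resp.\ horizontal) lines of $\Ap(S)$ as exactly $e_1$ (resp.\ $e_2$) via a residue-class-mod-$e_1$ (resp.\ mod-$e_2$) argument, then place them on the top consecutive levels using Lemma \ref{basic-Ap}(7)--(8), with finiteness of the remaining levels coming from (7). Your write-up via the set $T_1$ and the explicit anchoring of the topmost line $\Delta_1(\g+\e)\subseteq A_e$ (Remark \ref{rem1}) is just a slightly more explicit rendering of the paper's counting and inductive placement.
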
 

%\end{prop}

\begin{proof}
First we show that in the projection $S_1$ of $S$ there are 
exactly $e_1$ elements $s_1, \ldots, s_{e_1}$ such that $\Delta_1(s_i, r)$ is an infinite line of $\Ap(S)$ (for some $r \in S_2$). 
Let $\boldsymbol{c}=(c_1,c_2)$ be the conductor of $S$. Following the preceding definitions, we have that for every $n \geq c_1$ and sufficiently large $r \in S_2$, $ \Delta_1(n,r) $ is an infinite line of $S$. Moreover, $ \Delta_1(n,r) \subseteq\e+ S$ if and only if also $ \Delta_1(n-e_1,r) $ is an infinite line of $S$, and conversely $\Delta_1(n,r) \subseteq \Ap(S)$ if $ \Delta_1(n-e_1,r) $ is not an infinite line of $S$ for any $r \in S_2$. It follows that for every $n$ there exists a unique $m \equiv n$ mod $e_1$ such that $ \Delta_1(m,r) $ is an infinite line of $\Ap(S)$. With the same argument it can be shown that the analogous situation happens on $S_2$ and therefore there are $e_2$ infinite lines of $\Ap(S)$ of the form $\Delta_2(q, t_i)$ corresponding to some elements $t_1, \ldots, t_{e_2} \in S_2$.
%To see this.... \bf completare senza usare la simmetria.. \rm
Now, notice that, by Lemma \ref{basic-Ap}(7), if an infinite line is contained in $\Ap(S)$, then its elements must be contained eventually in a level $A_i$. Moreover, by Lemma \ref{basic-Ap}(8), $A_i$ cannot contain more than two infinite lines and, if it contains two of them, they must be one of the form $\Delta_1(s_1, r)$ and the other of the form $\Delta_2(q, s_2)$. %in $S_1$ and the other in $S_2$. %if $\al \ll \be$, then $\al $ and $\be$ cannot both be in the same level $A_i$, and therefore $A_i$ cannot contain more than two infinite points and, if it contains two of them, they must be one in $S_1$ and the other in $S_2$.
Applying inductively the definition of the levels $A_i$, it follows that the levels $A_e, A_{e-1}, \dots, A_{e-e_j+1}$ contain the $e_j$ infinite lines contained in $\Ap(S)$ and corresponding to the elements of $S_j$. 
\end{proof}

We conclude this section by proving a generalization of Remark \ref{remnumerical} holding for good semigroups. We show that if $(R, \mathfrak{m}, k)$ is an analytically unramified one-dimensional local reduced ring, its quotient ring $R/(x)$, where $x$ is a nonzero element having minimal value in the value semigroup $S=v(R)$, can be generate as a $k$-vector space by a set of $e$ elements having values in all the different levels of the Ap\'{e}ry Set of $S$.

\begin{theorem}
\label{anelloquoziente}
Let $(R, \mathfrak{m}, k)$ be an analytically unramified one-dimensional local reduced ring, having value semigroup $S=v(R)$ and let $x \in R$ such that $v(x)= \e = \min(S \setminus \lbrace \boldsymbol{0} \rbrace)$. Let $ \Ap(S)=\bigcup_{i=1}^e A_i $ be the Ap\'{e}ry Set of $S$. 
It is possible to construct a chain $$\al_1 < \al_2 < \ldots < \al_e \in S$$ such that $\alpha_i \in A_i$ and, for every collection of $f_i \in R$ having $v(f_i)= \alpha_i $, $$ \dfrac{R}{(x)}= \langle \overline{f_1}, \overline{f_2}, \ldots, \overline{f_e} \rangle_k. $$
\end{theorem}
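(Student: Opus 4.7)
The proof splits naturally into two parts: construction of the chain and the generation statement. For the construction I would start from $\al_1 := \boldsymbol{0} \in A_1$ and iterate Lemma \ref{basic-Ap}(6): given $\al_i \in A_i$, that lemma guarantees an $\al_{i+1} \in A_{i+1}$ consecutive to $\al_i$ in $\Ap(S)$. After $e-1$ iterations one obtains a chain $\al_1 < \al_2 < \cdots < \al_e$ with $\al_i \in A_i$; this is the same chain used in the first half of the proof of Theorem \ref{livelli}. Since $\al_i \leq \al_{i+1}$ componentwise, both coordinates are weakly monotone along the chain, a fact that will be essential below.

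For the generation statement, by Remark \ref{multiplycity} we have $\dim_k R/(x) = \lambda_R(R/(x)) = e$, and by Proposition \ref{prop1} each $\overline{f_i}$ is nonzero in $R/(x)$, since $v(f_i)=\al_i\in\Ap(S)$. Hence it suffices to prove $k$-linear independence of $\overline{f_1}, \ldots, \overline{f_e}$. I would argue by contradiction: suppose $\sum_i c_i \overline{f_i} = 0$ in $R/(x)$ with $c_i \in k$ not all zero, and lift each nonzero $c_i$ to a unit $u_i \in R$, so that $g := \sum u_i f_i \in (x)$ and, by Proposition \ref{prop1}, either $g = 0$ or $v(g) \in \e + S$; in particular $v(g) \notin \Ap(S)$. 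Setting $j := \min\{i : c_i \neq 0\}$, the goal is to show $v(g) = \al_j \in A_j \subseteq \Ap(S)$, giving the desired contradiction.

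The valuation analysis is where the real work lies, and it is the main obstacle. By the weak monotonicity of the chain, every remaining summand satisfies $v(u_i f_i) = \al_i \geq \al_j$ componentwise for $i > j$ with $c_i \neq 0$. On each branch $\ell$ one is then in a one-branch situation in which $\al_{j,\ell}$ is the minimal summand-valuation. If this minimum is uniquely attained on both branches, $v(g) = \al_j$ immediately. Otherwise, the finitely many summands sharing a coordinate with $\al_j$ can produce cancellations among their leading coefficients in $k$. To handle these, one uses the residually rational hypothesis (exactly as in the proof of Proposition \ref{prop1}) to analyze leading coefficients in the residue field, together with axioms (G1), (G2) and the consecutivity provided by the chain, showing that the resulting valuation still lies in $\Ap(S)$. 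The delicate point is that the argument must be uniform across every choice of lifts $u_i$ and every choice of representatives $f_i$ with $v(f_i)=\al_i$; it is this uniformity, coupled with the interplay between the two branches, that makes the valuation analysis the technical heart of the theorem.
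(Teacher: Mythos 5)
There is a genuine gap, and it sits exactly where you yourself place ``the technical heart''. Your chain is built from Lemma \ref{basic-Ap}(6) alone, i.e.\ $\al_{i+1}$ is an arbitrary element of $A_{i+1}$ consecutive to $\al_i$ in $\Ap(S)$, and your treatment of the cancellation case is only a promise (``one uses residual rationality, (G1), (G2) and the consecutivity \dots showing that the resulting valuation still lies in $\Ap(S)$''). That step does not follow from those ingredients, and for a chain chosen as you chose it there is no reason it should: if $\al_{j+1}$ shares, say, the first coordinate with $\al_j$, then cancellation among the summands sharing that coordinate pushes $v\bigl(\sum_i \lambda_i f_i\bigr)$ into $\Delta_2^S(\al_j)\cup\{\al_j\}$, and nothing in your construction guarantees $\Delta_2^S(\al_j)\subseteq\Ap(S)$; if that half-line meets $\e+S$ you get no contradiction. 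Since the theorem quantifies over \emph{every} choice of representatives $f_i$ and the scalars $\lambda_i\in k$ are arbitrary, a leading-coefficient analysis in the residue field cannot exclude cancellations — they can only be aimed in a harmless direction, and that is a property of the chain, not of the lifts.

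This is precisely the missing idea, and it is how the paper proceeds. The chain is chosen by a direction-aware rule: if $\al_i\ll\be$ for some $\be\in A_{i+1}$, take $\al_{i+1}=\be$; otherwise $\al_i=\be\wedge\de$ with $\be\in\Delta_1^S(\al_i)\cap A_{i+1}$ and $\de\in\Delta_2^S(\al_i)\cap A_{i+1}$, and Lemma \ref{altro-Ap}(2) guarantees that at least one of $\Delta_1^S(\al_i)$, $\Delta_2^S(\al_i)$ is entirely contained in $\Ap(S)$; one then steps in the \emph{other} direction. With this choice, for $j$ the least index with $\lambda_j\neq 0$, either $\al_j\ll\al_{j+1}$ and $v\bigl(\sum_i\lambda_i f_i\bigr)=\al_j$, or the only possible cancellation direction is the half-line certified to lie in $\Ap(S)$, so the valuation of the sum lies in $\Delta_h^S(\al_j)\cup\{\al_j\}\subseteq\Ap(S)$ and Proposition \ref{prop1} concludes; no residue-field computation is needed. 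Your dimension count via Remark \ref{multiplycity}, the use of Proposition \ref{prop1}, and the reduction to ``the valuation of the sum stays in $\Ap(S)$'' all match the paper; what is missing is the specific construction of the chain that makes that last claim true, and its proof.
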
 

%\end{prop}

\begin{proof}
By Remark \ref{multiplycity}, the dimension over $k$ of $R/(x)$ is $e$, hence we only need to show that, for $i=1, \ldots, e$, we can find elements $\al_i$ such that the correspondent $f_i$ are linearly independent over $k.$

We set $\al_1= \boldsymbol{0}$ and then we define the other elements $\al_i$ using the following procedure: in case $\al_i \ll \be$ for some $\be \in A_{i+1}$, we may simply set $\al_{i+1}:= \be$. Otherwise, if $\al_i= \be \wedge \de $ with $\be \in \Delta^S_1(\al_i) \cap A_{i+1}$ and $\de \in \Delta^S_2(\al_i) \cap A_{i+1}$, by Lemma \ref{altro-Ap}(2), we have $\Delta_h^S(\al_i) \subseteq \Ap(S)$ and then we set $\al_{i+1}:= \be$ if $h=2$ and $\al_{i+1}:= \de$ if $h=1$ (if $\Delta^S(\al_i) \subseteq \Ap(S)$ we can take indifferently one of them).

Now, taking $f_i \in R$ such that $v(f_{i})= \al_i \in \Ap(S)$, we clearly get by Proposition \ref{prop1} that $ \overline{f_{i}} $ is nonzero in $R/(x)$. Then we consider  $ v(\sum_{i=1}^{e} \lambda_if_i) $ for $\lambda_i \in k$.
 Let $j$ be the minimal index such that $\lambda_j \neq 0$. If $\al_j \ll \al_{j+1}$, we obtain $$ v(\sum_{i=1}^{e} \lambda_if_i) = v( \lambda_jf_j)= \al_j \in \Ap(S) $$ and therefore $ \sum_{i=1}^{e} \lambda_i\overline{f_{i}} $ is nonzero in $R/(x)$. 
 Otherwise, we may assume $\al_{j+1} \in \Delta^S_1(\al_j)$ and hence,
  our procedure used to define the $\al_i$ implies now that $ \Delta^S_2(\al_j) \subseteq \Ap(S) $. It follows that $$ v(\sum_{i=1}^{e} \lambda_if_i) \in \Delta^S_2(\al_j) \cup \lbrace \al_j \rbrace \subseteq \Ap(S) $$ and thus $ \sum_{i=1}^{e} \lambda_i\overline{f_{i}} $ is nonzero in $R/(x)$.
\end{proof}

\section{Symmetric good semigroups}

In this section we describe more properties of the Ap\'{e}ry Set of a good semigroup in the symmetric case. 

\begin{definition} 
\label{defsimmetrici}
 A good semigroup $S$ is \it symmetric \rm if, for every $\al \in \N^2,$ $\al \in S$ if and only if $ \Delta^S(\g - \al) = \emptyset. $
\end{definition}

Symmetry is an interesting concept because, in the value semigroup case, it is equivalent to the Gorensteiness of the associated ring. Indeed, an analytically unramified 
one-dimensional local reduced ring is Gorenstein if and only if its value semigroup is symmetric.
But more in general, a symmetric good semigroup has other nice properties, that we list in next proposition. Some of them have been already proved in \cite[Proposition 3.2]{apery}. An interesting fact that we are proving is that it is possible to know the number of absolute elements of a symmetric good semigroup only looking at one of its (numerical) projections.

\begin{remark}
	The projections of a symmetric good semigroup may fail to be symmetric numerical semigroups, as one can see for instance in Fig. \ref{semdual}.
\end{remark}

\begin{prop} 
\label{simmetria}
Let $S \subseteq S_1 \times S_2 \subseteq \N^2$ be a symmetric good semigroup, let $\e=(e_1, e_2)$, $\g = (\gamma_1,\gamma_2)$ and $ \Ap(S) $ be defined as previously. %Let $ \Ap(S)=\bigcup_{i=1}^e A_i $ be the Ap\'{e}ry Set of $S$.
\begin{enumerate} 
 \item[(1)] If $ \al \in S $ is a absolute element, then also $\g - \al \in S$ and it is a absolute element.
 \item[(2)] The number of the absolute elements of $S$ is $n(S_1)- b(S_1)=n(S_2)- b(S_2),$ where $n(S_i)= |S_i \cap \lbrace 0, 1, \ldots, \gamma_i \rbrace|$ and $b(S_i)= |\N \setminus S_i|$.
\item[(3)] For $\al \in S$, $\al \in \Ap(S)$ if and only if $ \Delta^S(\g  + \e  - \al) \neq \emptyset. $
\item[(4)] If $\al \in \Ap(S)$, then $ \Delta^S(\g  + \e - \al) \subseteq \Ap(S)$.
 \item[(5)] Let $\al \in \N^2.$ If $ \Delta^S(\al) \subseteq \Ap(S)$ (possibly it is empty), then $ \g  + \e - \al \in S $. %Moreover $\g  + \e - \al \in \Ap(S)$ if and only if $ \Delta^S(\al) \neq \emptyset $.
\item[(6)] Let $\al \in \Ap(S)$. Then for $i=1,2$; $\Delta^S_i(\g+ \e- \al) = \emptyset$ if and only if $\Delta^S_i(\al) \nsubseteq \Ap(S)$.
 %If $ \Delta^S(\al) \subseteq Ap(S)$ and it is non empty, then $ \g  + \e - \al \in \Ap(S). $
 \end{enumerate}
\end{prop}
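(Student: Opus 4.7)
For parts (1), (3), and (4) the plan is to use the symmetry biconditional directly. For (1), I would first note that any absolute $\al$ must satisfy $\al \leq \g$ coordinatewise (otherwise $\al_i \geq c_i$ forces an infinite line in $\Delta^S(\al)$ by the conductor remark), so $\g - \al \in \N^2$. Since $\al \in S$, symmetry gives $\Delta^S(\g - \al) = \emptyset$; a second application of symmetry with $\g - \al$ in place of $\al$ gives $\g - \al \in S$ from $\Delta^S(\al) = \emptyset$. Combined, $\g - \al$ is itself absolute. For (3), since $\al \in S \setminus \lbrace \boldsymbol{0} \rbrace$ forces $\al \geq \e$ (as $\e$ is the coordinatewise minimum of nonzero elements of $S$), the element $\al - \e$ lies in $\N^2$, and $\al \in \Ap(S) \Leftrightarrow \al - \e \notin S$; symmetry applied to $\al - \e$ converts the right-hand side into $\Delta^S(\g + \e - \al) \neq \emptyset$ (the case $\al = \boldsymbol{0}$ is immediate because $\Delta^S(\g + \e) \neq \emptyset$). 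For (4), given $\be \in \Delta^S(\g+\e-\al)$, say $\be \in \Delta_1$, the involution $\xi \mapsto \g+\e-\xi$ places $\al$ itself inside $\Delta_1^S(\g+\e-\be)$ (first coordinates match, and $\alpha_2 > \gamma_2 + e_2 - \beta_2$), so $\Delta^S(\g+\e-\be) \neq \emptyset$, and (3) applied to $\be$ yields $\be \in \Ap(S)$.

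For (5) the argument is a short contradiction. Applying symmetry with $\g+\e-\al$ in place of $\al$, membership $\g+\e-\al \in S$ is equivalent to $\Delta^S(\al-\e) = \emptyset$. Any $\be \in \Delta^S(\al-\e)$ would lie in $S$, hence $\be + \e \in \e + S$, while simultaneously $\be + \e \in \Delta^S(\al)$ by translating coordinates; this contradicts $\Delta^S(\al) \subseteq \Ap(S) = S \setminus (\e+S)$. One must verify separately that $\g+\e-\al \in \N^2$ so that the symmetry biconditional applies; this is a short case check using the fact that if some coordinate of $\al$ exceeds the corresponding coordinate of $\g+\e$, then one can exhibit elements of $\Delta^S(\al) \cap (\e+S)$, breaking the hypothesis.

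Part (6) is a componentwise refinement that I would argue by translating the ``no element in $\Delta_i$'' condition through symmetry. Fixing $i=1$: for $\Delta_1^S(\al) \not\subseteq \Ap(S) \Rightarrow \Delta_1^S(\g+\e-\al) = \emptyset$, a $\be \in \Delta_1^S(\al)$ with $\be - \e \in S$ produces, via symmetry, a witness in $\Delta^S(\g - (\be - \e))$ that blocks the existence of $S$-points in $\Delta_1(\g+\e-\al)$. The converse assumes $(\g_1+e_1-\alpha_1, y) \notin S$ for all $y > \g_2 + e_2 - \alpha_2$, rewrites each such exclusion by symmetry as $\Delta^S(\alpha_1 - e_1, \g_2 - y) \neq \emptyset$, and by choosing $y$ appropriately harvests from these nonemptinesses an element of $(\e + S) \cap \Delta_1(\al)$ witnessing $\Delta_1^S(\al) \not\subseteq \Ap(S)$.

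The main obstacle is (2), which is combinatorial rather than a direct symmetry manipulation. The plan is to project the absolute set onto $S_1$ via $\al \mapsto \alpha_1$; this projection is injective because two absolute elements with equal first coordinates immediately violate $\Delta_1^S(\cdot) = \emptyset$ by comparing second coordinates. The image lies in $S_1 \cap \lbrace 0, \dots, \g_1 \rbrace$, and the columns not hit should be exactly those $a$ for which the vertical slice $\lbrace y : (a,y) \in S \rbrace$ is unbounded, i.e., the columns containing an infinite vertical line in the sense of Theorem \ref{infiniti}. Using the involution from (1) together with the structure of infinite lines over the $e_1$ residue classes in $S_1$, I expect to match these missing columns bijectively with $\N \setminus S_1$, yielding $|\text{absolutes}| = n(S_1) - b(S_1)$; the analogous projection onto $S_2$ gives the second equality. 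Setting up this bijection between unbounded columns and gaps of $S_1$ in a way that is robust across all (not necessarily value) good semigroups is the delicate point of the proof.
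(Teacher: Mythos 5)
Parts (1), (3), (4) and (5) of your plan are correct and essentially the paper's own argument: you translate membership/Ap\'ery conditions through the defining biconditional $\al \in S \Leftrightarrow \Delta^S(\g-\al)=\emptyset$, and your extra care about when $\g-\al$, $\al-\e$, $\g+\e-\al$ lie in $\N^2$ is harmless (and correct). The first implication of (6) is also fine once one reads your ``witness'' sentence as: any hypothetical point of $\Delta_1^S(\g+\e-\al)$ would lie in $\Delta^S(\g-(\be-\e))$, which is empty by symmetry since $\be-\e\in S$.

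There are, however, two genuine gaps. First, in (2) the whole content of the statement is precisely the step you leave open: that the columns of index at most $\gamma_1$ carrying an infinite vertical line are in bijection with $\N\setminus S_1$. Your proposed tools do not reach it: the involution of (1) only permutes absolute elements, and the residue-class statement behind Theorem \ref{infiniti} concerns infinite lines of $\Ap(S)$ modulo $e_1$ and does not identify \emph{which} columns below the conductor are infinite. The needed equivalence is a direct application of Definition \ref{defsimmetrici} to the points $(n,y)$ of a column: $n\notin S_1$ iff $(n,y)\notin S$ for all $y$, iff $\Delta^S(\gamma_1-n,\gamma_2-y)\neq\emptyset$ for all $y$, and this is equivalent (using locality for $y=0$ and the conductor for large $y$) to $(\gamma_1-n,\gamma_2+m)\in S$ for all $m\geq 0$; this is exactly how the paper counts. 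You also use tacitly that every finite nonempty column of index $\le\gamma_1$ contains an absolute element; this is true but needs the one-line observation (valid in any good semigroup, by (G2) applied to two elements sharing the second coordinate) that $\Delta_2^S(\al)\neq\emptyset$ forces $\Delta_1^S(\al)\neq\emptyset$, so the top of a finite column is automatically absolute. Second, in the converse direction of (6) the phrase ``by choosing $y$ appropriately'' hides a real difficulty: for any single $y$, the nonempty set $\Delta^S(\alpha_1-e_1,\gamma_2-y)$ may be witnessed only by a \emph{horizontal} element $(t,\gamma_2-y)$ with $t>\alpha_1-e_1$, which is not an element of $(\e+S)\cap\Delta_1(\al)$. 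To finish you must either argue extremally (large $y$ shows the column at $\alpha_1-e_1$ is nonempty; take its maximal element $s_0<\alpha_2-e_2$, apply the harvest at $z=s_0$ together with (G2) to push the column strictly above $s_0$, the value $\alpha_2-e_2$ being excluded because $\al\in\Ap(S)$, and treat separately the cases $\alpha_i>\gamma_i+e_i$), or follow the paper's shorter route: by (3) and (4) the set $\Delta^S(\g+\e-\al)$ is nonempty and contained in $\Ap(S)$; if its vertical part is empty, (G2) gives $\g+\e-\al\notin S$, so (5) yields $\Delta^S(\al)\nsubseteq\Ap(S)$, while (4) applied to a horizontal element $\om\in\Delta_2^S(\g+\e-\al)$ gives $\Delta_2^S(\al)\subseteq\Delta_2^S(\g+\e-\om)\subseteq\Ap(S)$, whence $\Delta_1^S(\al)\nsubseteq\Ap(S)$.
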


\begin{proof}
\noindent (1) Follows by the definitions of symmetric semigroup and absolute element. 

\noindent (2) By definition of symmetric good semigroup, we have that $n \not \in S_1$ if and only if $\Delta^S_1(n,0) = \emptyset$, and if and only if $(\gamma_1-n, \gamma_2+m) \in S$ for every $m\geq 0$. It follows that the number of elements $s \in S_1$ such that $ \Delta_1(s_1,r) $  is an infinite line of $S$ (for some $r \in S_2$) is exactly $b(S_1)$. Call $M$ the number of absolute elements of $S$. Hence, $$\gamma_1 = M + 2b(S_1),$$ and, since $\gamma_1 = n(S_1)+ b(S_1)$, we obtain $M=n(S_1)- b(S_1).$ In the same way, we can show $M=n(S_2)- b(S_2).$

\noindent (3) An element $\al \in S$ is in $\Ap(S)$ if and only if $\al - \e \not \in S$, and this happens by Definition \ref{defsimmetrici} if and only if $ \Delta^S(\g  + \e  - \al) \neq \emptyset. $

\noindent (4) Since $\al \in \Ap(S)$, $ \Delta^S(\g - \al) = \emptyset. $ It follows that $ \Delta^S(\g + \e - \al)\subseteq \Ap(S). $

\noindent (5) If $ \Delta^S(\al) \subseteq \Ap(S)$, then by definition $ \Delta^S(\al - \e) = \emptyset.$ and therefore $\g  + \e - \al \in S$. %Moreover $ \Delta^S(\al) $ is non-empty if and only if $\g - \al \not \in S$ and this is equivalent to say that $\g  + \e - \al \in \Ap(S).$

\noindent (6) We prove the result for $i=1$. By (3) and (4), we have that $\Delta^S(\g+ \e- \al)$ is not empty and contained in $ \Ap(S) $. Assuming $\Delta^S_1(\g+ \e- \al) = \emptyset$, by axiom (G2) we get $ \g+ \e- \al \not \in S $ and $\Delta^S_2(\g+ \e- \al) \neq \emptyset$. By (5), it follows that $ \Delta^S(\al) \nsubseteq \Ap(S)$ and moreover, since there exists $\om \in \Delta^S_2(\g+ \e- \al) \subseteq \Ap(S)$, we get by (4) $ \Delta^S_2(\al) \subseteq \Delta^S_2(\g+ \e - \om) \subseteq \Ap(S) $. Hence $\Delta^S_1(\al) \nsubseteq \Ap(S)$.  

Conversely, if $\Delta^S_1(\al) \nsubseteq \Ap(S)$, there exists $\te \in \Delta^S_1(\al) \cap (\e+S)$ and therefore, again by (3) $ \Delta^S_1(\g + \e - \al) \subseteq \Delta^S_1(\g+ \e - \te) = \emptyset $.
%If $ \Delta^S_2(\a l) \nsubseteq \Ap(S)$, by (3) we would have $\Delta^S_2(\g+ \e- \al) = \emptyset$ and hence $ \Delta^S_1(\g+ \e- \al) $ would be non empty. Thus we must have $\Delta^S_1(\al) \nsubseteq \Ap(S)$.
%If $ \Delta^S(\al) \subseteq \Ap(S)$ and it is non empty, then by (1) $\g - \al \not \in S$ and by definition $ \Delta^S(\al - \e) = \emptyset.$ It follows again by (1), that $\g  + \e - \al \in S$, and therefore $\g  + \e - \al \in \Ap(S).$
%$\al + \e \in S \setminus Ap(S)$, and by (4) 
\end{proof}

%\newpage

\bigskip

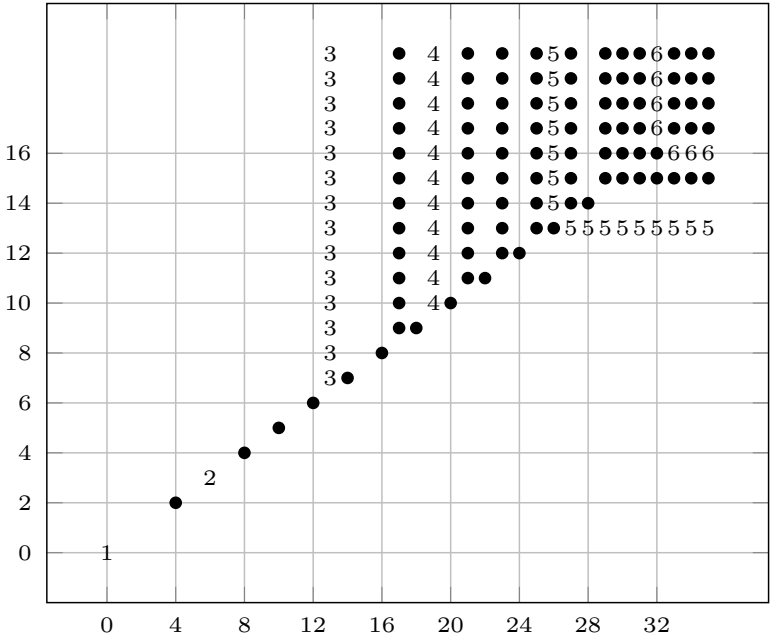
\begin{figure}[h]

 \begin{tikzpicture}[scale=1.4]
	\begin{axis}[grid=major, ytick={0,2,4,6,8, 10, 12, 14, 16}, xtick={0,4,8,12,16,20, 24, 28, 32}, yticklabel style={font=\tiny}, xticklabel style={font=\tiny}]

\addplot[only marks] coordinates{  (4,2) (8,4) (10,5) (12,6) (14,7) (16,8) (17,9) (18,9) (20,10) (22,11) (24,12) (26,13)
(29,15) (29,16) (29,17) (29,18) (29,19) (29,20)
(30,15) (30,16) (30,17) (30,18) (30,19) (30,20)
(31,15) (31,16) (31,17) (31,18) (31,19) (31,20) (32,16)
(32,15) (33,15) (34,15) (35,15) 
(33,17) (34,17) (35,17)
(33,18) (34,18) (35,18)
(33,19) (34,19) (35,19)
(33,20) (34,20) (35,20)
(28,14)
(17,10) (17,11) (17,12) (17,13) (17,14) (17,15) (17,16) (17,17) (17,18) (17,19) (17,20)
(21,11) (21,12) (21,13) (21,14) (21,15) (21,16) (21,17) (21,18) (21,19) (21,20)
 (23,12) (23,13) (23,14) (23,15) (23,16) (23,17) (23,18) (23,19) (23,20)
 (25,13) (25,14) (25,15) (25,16) (25,17) (25,18) (25,19) (25,20)
 (27,14) (27,15) (27,16) (27,17) (27,18) (27,19) (27,20)}; 
 %\addplot[only marks, mark=diamond] coordinates{(28,30) (28,31) (28,32) (28,33) (28,34) (28,35) (29,29) (30,29) (31,29) (32,29) (33,29)}; %7
 \addplot[only marks, mark=text, mark options={scale=1,text mark={ \tiny 1}}, text mark as node=true] coordinates{(0,0)}; %1
 \addplot[only marks, mark=text, mark options={scale=1,text mark={ \tiny 2}}, text mark as node=true] coordinates{(6,3)}; %2
 \addplot[only marks, mark=text, mark options={scale=1,text mark={ \tiny 3}}, text mark as node=true] coordinates{(13,7) (13,8) (13,9) (13,10) (13,11) (13,12) (13,13) (13,14) (13,15) (13,16) (13,17) (13,18) (13,19) (13,20)}; %3
 \addplot[only marks, mark=text, mark options={scale=1,text mark={ \tiny 6}}, text mark as node=true] coordinates{ (32,17) (32,18) (32,19) (32,20) (33,16) (34,16) (35,16)  }; %6
 \addplot[only marks, mark=text, mark options={scale=1,text mark={ \tiny 4}}, text mark as node=true] coordinates{(19,10) (19,11) (19,12) (19,13) (19,14) (19,15) (19,16) (19,17) (19,18) (19,19) (19,20) }; %4
 \addplot[only marks, mark=text, mark options={scale=1,text mark={ \tiny 5}}, text mark as node=true] coordinates{(26,14) (26,15) (26,16) (26,17) (26,18) (26,19) (26,20) (27,13) (28,13) (29,13) (30,13) (31,13) (32,13) (33,13) (34,13) (35,13)}; %5
 \end{axis}
  \end{tikzpicture}

 \caption{\scriptsize The symmetric value semigroup of $k[[X,Y]]/(Y^4-2X^3Y^2-4X^5Y+X^6-X^7)(Y^2-X^3)$, see \cite[pag.8]{apery}. It is possible to observe that the number of absolute elements  of this semigroup is $13 = 21 - 8 = 14 - 1$ as predicted by the formula in Proposition \ref{simmetria}(2).%The levels of the Ap\'{e}ry Set are indicated by: $ A_1 = \diamond $, $ A_2 = \circ $, $ A_3 = \square $, $ A_4 = \bigtriangleup $, $ A_5 = \times $, $ A_6 = \diamond $.
 }
\label{semlargo}
\end{figure}

%\bigskip
%\newpage

Let $S \subseteq S_1 \times S_2 \subseteq \N^2$ be a good semigroup. We describe now, in the symmetric case, the absolute elements and the infinite lines of $\Ap(S)$ and of $\e+S$ in terms of the elements of a single projection, say $S_1.$
  For $n \in \N$, we consider the set $$ \Delta^S_1(n, 0)= \lbrace (n,m) \in S \, | \, m \geq 0 \rbrace. $$ This set can be empty, finite or infinite. It is infinite if and only if $\Delta^S_1(n, r)$ is an infinite line of $S$ for some $r \in S_2$; it is finite if and only if $ \Delta^S_1(n, 0) $ contains a absolute element of $S$; it is empty if and only if $n \not \in S_1.$
 The analogous situation holds for the other projection $S_2$.
%\end{definition}

\begin{lemma} 
\label{l1}
The set $ \Delta^S_1(n, 0) \subseteq \Ap(S) $ if and only if $n \in \Ap(S_1).$
\end{lemma}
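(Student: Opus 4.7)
The plan is to prove the two implications directly from the definitions of the Apéry sets, where $\Ap(S_1)$ denotes the Apéry set $\Ap(S_1,e_1)$ of the numerical projection with respect to the multiplicity $e_1$ of $S_1$. Notably, the symmetry hypothesis of this section will not be used: the statement depends only on the fact that $S_1$ is the first projection of $S$. Throughout I may tacitly assume $n \in S_1$, since otherwise $\Delta^S_1(n,0) = \emptyset$ and the equivalence is read in that trivial register (the preceding paragraph of the paper already isolates the case $n \notin S_1$ as $\Delta^S_1(n,0) = \emptyset$).

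For the direction $(\Leftarrow)$, I assume $n \in \Ap(S_1)$, i.e.\ $n - e_1 \notin S_1$, and take an arbitrary $(n,m) \in \Delta^S_1(n,0) \subseteq S$. To show $(n,m) \in \Ap(S)$ I only need to verify that $(n,m) - \e = (n-e_1, m - e_2) \notin S$. This splits cleanly into two sub-cases: if $m - e_2 < 0$ there is nothing to do since $S \subseteq \N^2$; otherwise, membership of $(n-e_1, m-e_2)$ in $S$ would force $n - e_1 \in S_1$ by projection, contradicting the hypothesis. Hence $(n,m) \in \Ap(S)$.

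For the direction $(\Rightarrow)$, I argue by contradiction: assuming $\Delta^S_1(n,0) \subseteq \Ap(S)$ together with $n - e_1 \in S_1$, I pick some $(n - e_1, m') \in S$ and translate by $\e \in S$ to obtain $(n, m' + e_2) \in S$. This new element lies in $\Delta^S_1(n,0)$, hence by hypothesis it lies in $\Ap(S)$; but its difference with $\e$ is $(n - e_1, m') \in S$, contradicting the defining condition of $\Ap(S)$. Therefore $n - e_1 \notin S_1$ and $n \in \Ap(S_1)$.

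There is no real obstacle in this argument; the only noteworthy move is the choice to lift a chosen fibre point $(n-e_1,m')$ by $\e$ rather than to work with an arbitrary point of $\Delta^S_1(n,0)$, which is exactly what ensures that the subsequent subtraction of $\e$ lands at an honest element of $\N^2$ (and hence can be tested for membership in $S$).
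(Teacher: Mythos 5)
Your argument is correct and is essentially the paper's proof: the paper reduces the statement to "$n\in\Ap(S_1)$ iff $n-e_1\notin S_1$ iff $\Delta^S_1(n-e_1,0)=\emptyset$" and then appeals to the definition of $\Ap(S)$, which is exactly the two implications you spell out (testing $(n,m)-\e$ for membership in $S$, and lifting a point of $\Delta^S_1(n-e_1,0)$ by $\e$). Your explicit handling of the case $m-e_2<0$ and your tacit restriction to $n\in S_1$ match the paper's implicit conventions, so there is nothing to add.
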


\begin{proof}
We have $n \in \Ap(S_1)$ if and only if $n - e_1 \not \in S_1$ if and only if $ \Delta^S_1(n-e_1, 0) = \emptyset $. The result now follows from the definition of $\Ap(S).$
\end{proof}

% $ \overline{n^s} $
\newpage

\begin{theorem} 
\label{proiezione}
Let $S \subseteq S_1 \times S_2 \subseteq \N^2$ be a symmetric good semigroup, let $\e=(e_1, e_2)$ be its minimal non-zero element. %Let $ \Ap(S)=\bigcup_{i=1}^e A_i $ be the Ap\'{e}ry Set of $S$. %We denote $\e + S = S \setminus \Ap(S)$. 
Let $n \in \N$ and define $n^{\prime}= \gamma_1 + e_1 - n.$
\begin{enumerate}
 \item[(1)] $ \Delta^S_1(n, 0) = \emptyset $ if and only if $ \Delta^S_1(n^{\prime}, 0) $ is infinite and eventually contained in $ \e + S. $
 \item[(2)] $ \Delta^S_1(n, 0) $ is finite with maximal element in $ \e + S $ if and only if $ \Delta^S_1(n^{\prime}, 0) $ is finite with maximal element in $ \e + S $.
 \item[(3)] $ \Delta^S_1(n, 0) \nsubseteq \Ap(S)$ and it is finite with maximal element in $ \Ap(S) $ if and only if $ \Delta^S_1(n^{\prime}, 0) \nsubseteq \Ap(S)$ and it is finite with maximal element in $ \Ap(S) $.
 \item[(4)] $ \Delta^S_1(n, 0) \subseteq \Ap(S) $ and it is finite if and only if $\Delta^S_1(n^{\prime}, 0)$ is infinite and eventually contained in $\Ap(S)$ but it contains some element of $ \e + S. $
 \item[(5)] $ \Delta^S_1(n, 0) \subseteq \Ap(S)$ and it is infinite if and only if $ \Delta^S_1(n^{\prime}, 0) \subseteq \Ap(S) $ and it is infinite.
 \end{enumerate}
 All the correspondent statements hold replacing $S_1$ with $S_2.$
\end{theorem}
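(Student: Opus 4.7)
The central tool is the involution $\al\mapsto\g+\e-\al$: on the column over $n$ it is the map $(n,m)\mapsto(n',\gamma_2+e_2-m)$, which carries the column over $n$ bijectively onto the column over $n'$. Proposition \ref{simmetria}, in particular parts (3)--(6), describes exactly how membership in $S$, $\Ap(S)$ and $\e+S$ transforms under it.

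I would begin by recording the structural dichotomy for an arbitrary column $\Delta_1^S(n,0)$: by (G1) applied with the conductor $\C$, together with (G2), any column containing a point of second coordinate $\geq c_2$ contains the whole ray above it, so columns are empty, finite with a unique maximal element (which is absolute in direction $1$), or eventually an infinite ray above second coordinate $c_2$. Combined with the remark that $\Ap(S)\cap\{\al\geq\g+\e+\boldsymbol{1}\}=\emptyset$, this shows that the infinite tail of every infinite column lies in $\e+S$, and hence the five configurations listed in the statement, together with the two implicit infinite ones (``eventually in $\Ap(S)$ but meeting $\e+S$'' and ``eventually in $\e+S$''), exhaust every column of $S$.

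Next I would dispatch the five equivalences. Statement (5) follows most directly from Lemma \ref{l1}, which translates $\Delta_1^S(n,0)\subseteq\Ap(S)$ into $n\in\Ap(S_1)$, together with Proposition \ref{simmetria}(3)--(4): the involution sends the infinitely many $\Ap(S)$-points in the column over $n$ to $\Ap(S)$-points in the column over $n'$ whose second coordinates accumulate at $0$, forcing by the structural fact above that the column over $n'$ is itself an infinite ray contained in $\Ap(S)$, and the reverse direction is symmetric. Statement (1) is proved analogously: if $n\notin S_1$, then for every $m\geq 0$ the relation $(n,m)\notin S$ yields by the symmetry characterization a point of $\Delta^S(\g-(n,m))$ whose shift by $\e$ is a point of $\e+S$ in the column over $n'$; letting $m$ vary, one obtains $\e+S$-points with arbitrarily small second coordinate, so the column at $n'$ is infinite and its tail lies in $\e+S$. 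Statements (2), (3) and (4) concern finite columns and are handled by applying Proposition \ref{simmetria}(1) to the maximal element $(n,m_0)$ of the column (which is absolute in direction $1$): this produces an absolute element $\g-(n,m_0)$ of $S$ sitting as the top of a finite column over $\gamma_1-n=n'-e_1$, and shifting by $\e$ gives the top of the corresponding finite column over $n'$. Proposition \ref{simmetria}(6) then ensures that the ``some element of the column lies in $\e+S$'' condition in (3) transforms consistently, while (4) is the dual of (1) under the same mechanism.

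The main obstacle I expect is bookkeeping: each configuration mixes three independent pieces of data (empty vs.\ finite vs.\ infinite column, location of the top, presence of $\e+S$-elements), and all three must be tracked separately under the involution. My strategy is to settle (5) and the infinitude part of (1) first via Lemma \ref{l1} and the action of symmetry on $\Ap(S_1)$, and then deduce (2)--(4) by analysing the top element of a finite column via Proposition \ref{simmetria}(1) and (6).
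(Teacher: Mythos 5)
Your overall strategy (reflect by $\al\mapsto\g+\e-\al$, use Proposition \ref{simmetria}, and run a case analysis on the shape of a column) is the same as the paper's, but several of the key steps you rely on are not correct as stated. First, your structural claim that ``the infinite tail of every infinite column lies in $\e+S$'' is false: the remark $\Ap(S)\cap\{\al\ge\g+\e+\boldsymbol{1}\}=\emptyset$ only concerns points that are large in \emph{both} coordinates, and indeed Theorem \ref{infiniti} (and parts (4),(5) of the very statement you are proving) exhibit infinite vertical lines entirely contained in $\Ap(S)$; the correct fact is only that the tail of an infinite column is eventually all in $\Ap(S)$ or all in $\e+S$, according to whether the column over $n-e_1$ is empty or not. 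Second, you treat the reflection as a pointwise, membership-preserving bijection between the columns over $n$ and $n'$. It is not: Proposition \ref{simmetria}(3)--(6) relate the point $\al$ to the \emph{set} $\Delta^S(\g+\e-\al)$, not to the single point $\g+\e-\al$, which in general does not belong to $S$ at all; moreover the witnesses in $\Delta^S(\g+\e-\al)$ may lie in the horizontal direction $\Delta_2$, hence outside the column over $n'$, and for $m>\gamma_2+e_2$ the reflected point even leaves $\N^2$. This invalidates your argument for (5) (``sends $\Ap(S)$-points to $\Ap(S)$-points in the column over $n'$'' is exactly what \ref{simmetria}(4) does \emph{not} say) and the step in (1) where you place the shifted witness in the column over $n'$; note also that points of small second coordinate cannot certify that a column is infinite.

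Third, your unified treatment of (2)--(4) via ``the reflected top is the top of the column over $n'$'' only works in case (2): there the top $\al$ lies in $\e+S$, so \ref{simmetria}(3) gives $\Delta^S(\g+\e-\al)=\emptyset$ and $\al'$ is absolute. In case (3) the top $\al$ is in $\Ap(S)$, so $\Delta^S(\al')\neq\emptyset$ and $\al'\in\e+S$ is \emph{not} the top of the column over $n'$ (the paper must produce a strictly higher element $(n',r)\in\Ap(S)$, $r>\gamma_2+e_2-m$, by excluding cases (1),(2), and then bound the column using $\Delta^S(\de')=\emptyset$ for an element $\de$ of the column over $n$ lying in $\e+S$); in case (4) the column over $n'$ is infinite, so there is no top at all, and the forward direction needs the contradiction argument via \ref{simmetria}(3) and (5) applied to a hypothetical maximal $\Ap(S)$-element, while the converse is obtained by exclusion against (1)--(3). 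These substantive parts of (3), (4) and (5) — which in the paper are precisely where the work lies — are missing from your sketch, so as it stands the proposal has genuine gaps rather than just bookkeeping to finish.
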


\begin{proof}
\noindent (1) Observe that $ \Delta^S_1(n, 0) = \emptyset $ if and only if $ \Delta^S(n,-m) = \emptyset $ for all $m \geq 0$. This is equivalent by Definition \ref{defsimmetrici} to say that $(\gamma_1 - n, \gamma_2+ m) \in S$. Hence $ \Delta^S_1(n, 0) = \emptyset $ if and only if $ \Delta^S_1(\gamma_1 - n, 0) $ is infinite, which is equivalent to say that $  \Delta^S_1(\gamma_1 + e_1 - n, 0) =\Delta^S_1(n^{\prime}, 0) $ is infinite and contained in $ \e + S.$

\noindent (2) Let $\al \in \e+S$ be the maximal element of $S$ belonging to $ \Delta^S_1(n, 0) $. Hence $\al$ is an absolute element and, by Proposition \ref{simmetria}(1) $\g - \al $ is also an absolute element of $S$. It follows that $\al^{\prime} = \g + \e - \al \in \e+S$. Moreover, since $\al \in \e+S$, $ \Delta^S(\al^{\prime}) = \emptyset $ by Proposition \ref{simmetria}(3). Thus $ \al^{\prime} $ is a absolute element and $ \Delta^S_1(n^{\prime}, 0) $ is finite with maximal element in $ \e + S $. The converse is tautological.

\noindent (3) Let $ \Delta^S_1(n, 0) \nsubseteq \Ap(S)$ and it is finite with maximal element $\al= (n, m) \in \Ap(S) $. As in (2), we have that $\al^{\prime} = \g + \e - \al \in \e+S$. But, by exclusion, (1) and (2) imply that $ \Delta^S_1(n^{\prime}, 0) $ neither is infinite and eventually contained $ \e + S $ nor has a maximal in $ \e + S. $ Hence there must exist $\be = (n^{\prime}, r) \in \Ap(S)$ with $r > m^{\prime}:= \gamma_2 + e_2 -m $.
We conclude saying that, since $ \Delta^S_1(n, 0) \nsubseteq \Ap(S)$, there must exist an element $\de = (n, d) \in \e+S$ with $d < m$ and hence $\Delta^S(\de^{\prime})= \emptyset$. Thus there are only finite elements $ (n^{\prime}, q) \in S $ with $q > m^{\prime}$ and they are all in $ \Ap(S) $ by Proposition \ref{simmetria}(4), since they are elements of $ \Delta^S(\al^{\prime}) $. Since there exists at least one of such elements, namely $\be$, the thesis follows.

\noindent (4) %
Assume that $ \Delta^S_1(n, 0) \subseteq \Ap(S) $ and it is finite. Again by (1) and (2) we exclude that $ \Delta^S_1(n^{\prime}, 0) $ is infinite and eventually contained $ \e + S $ or has a maximal in $ \e + S. $ Let $\al= (n, m) \in \Ap(S) $ be the maximal element in $ \Delta^S_1(n, 0) $. We proceed like in the proof of (3) to say that $\al^{\prime} = \g + \e - \al \in \e+S$.
If by way of contradiction $ \Delta^S_1(n^{\prime}, 0) $ contains a maximal element $\te \in \Ap(S)$, it would follow by Proposition \ref{simmetria}(3 and 5) that $\te^{\prime} \in \Delta^S_1(n, 0) \cap (\e + S) $ and this is a contradiction.

Conversely, assume $ \Delta^S_1(n^{\prime}, 0) $ is infinite and eventually contained in $\Ap(S)$ but it contains some element $ \te \in \e + S. $ Since $ \Delta^S(\te^{\prime})= \emptyset $ (by Proposition \ref{simmetria}(3)), $ \Delta^S_1(n, 0) $ must be finite. We conclude by exclusion, since we characterized in (2) and (3) the other possible cases of a finite $ \Delta^S_1(n, 0) $.

\noindent (5) It follows since we excluded all the other possible cases in (1),(2),(3) and (4).
%$\al + \e \in S \setminus Ap(S)$, and by (4) 
\end{proof}

\begin{corollary} 
\label{c1}
Assume the same notations of Theorem \ref{proiezione}. Hence, $n \in \Ap(S_1)$ if and only if $ \Delta^S_1(n^{\prime}, 0) $ is infinite and eventually contained in $\Ap(S)$.
\end{corollary}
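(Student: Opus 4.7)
The plan is to combine Lemma \ref{l1} with a case-by-case reading of Theorem \ref{proiezione}. By Lemma \ref{l1}, the condition $n \in \Ap(S_1)$ is equivalent to $\Delta^S_1(n,0)$ being a \emph{nonempty} subset of $\Ap(S)$ (the nonemptiness coming automatically from $\Ap(S_1) \subseteq S_1$, i.e.\ from $n \in S_1$). So the whole task reduces to identifying which alternative among the five cases of Theorem \ref{proiezione} corresponds to $\Delta^S_1(n^{\prime},0)$ being infinite and eventually contained in $\Ap(S)$.

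For the forward direction, I would start from $n \in \Ap(S_1)$, deduce $\Delta^S_1(n,0)\neq\emptyset$ and $\Delta^S_1(n,0)\subseteq\Ap(S)$, and observe that this places us in exactly case (4) (when $\Delta^S_1(n,0)$ is finite) or case (5) (when it is infinite) of Theorem \ref{proiezione}. In case (5), $\Delta^S_1(n^{\prime},0)\subseteq\Ap(S)$ is infinite; in case (4), $\Delta^S_1(n^{\prime},0)$ is infinite and eventually contained in $\Ap(S)$, with some elements in $\e+S$. In both subcases the stated conclusion holds.

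For the converse, I would argue by exclusion. Suppose $\Delta^S_1(n^{\prime},0)$ is infinite and eventually contained in $\Ap(S)$. Then we cannot be in case (1) of Theorem \ref{proiezione} (which forces $\Delta^S_1(n^{\prime},0)$ to be eventually in $\e+S$), nor in cases (2) or (3) (which make $\Delta^S_1(n^{\prime},0)$ finite). Using that the involution $n \leftrightarrow n^{\prime}$ is compatible with the symmetric statement of Theorem \ref{proiezione}, the only remaining possibilities are case (4) reversed (giving $\Delta^S_1(n,0)\subseteq\Ap(S)$ finite and nonempty) or case (5) (giving $\Delta^S_1(n,0)\subseteq\Ap(S)$ infinite). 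In either situation $\Delta^S_1(n,0)$ is a nonempty subset of $\Ap(S)$, so Lemma \ref{l1} yields $n \in \Ap(S_1)$.

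The only subtle step is the bookkeeping around emptiness: one must make sure that in the converse direction the alternative $\Delta^S_1(n,0)=\emptyset$ is truly excluded, since then Lemma \ref{l1} applied naively would still give $\Delta^S_1(n,0)\subseteq\Ap(S)$ vacuously without $n\in S_1$. This is handled by case (1), which pairs $\Delta^S_1(n,0)=\emptyset$ with $\Delta^S_1(n^{\prime},0)$ being eventually in $\e+S$, contradicting our hypothesis. No new computation is required beyond the ones carried out in Theorem \ref{proiezione}.
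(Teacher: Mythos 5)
Your proposal is correct and follows exactly the route the paper intends (it states the corollary without proof, as an immediate consequence of Lemma \ref{l1} together with cases (1), (4) and (5) of Theorem \ref{proiezione}): forward via cases (4)--(5), converse by exclusion of cases (1)--(3). Your explicit bookkeeping of the empty case, which Lemma \ref{l1} glosses over, is the right precaution and closes the only potential loophole.
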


%\begin{corollary} 
%\label{c2} 
%Let $n \in S_1$ and assume there is a finite positive number of elements in $ \Delta^S_1(n, 0) \cap (\e+S)$. Call $\de$ the maximum of them. Hence $ \Delta^S(\de) \subseteq \Ap(S) $.
%\end{corollary}

%\begin{proof}
%By Theorem \ref{proiezione}, $\Delta^S_1(n^{\prime}, 0)$ contains a maximal element $\al $ of $S$. It is easy to see that $\de= \g+\e-\al$, indeed $ \g+\e-\al \in S $ by Proposition \ref{simmetria}(5), and if we assume $ \de > \g+\e-\al $, we would have $\al \in \Delta^S( \g+\e-\de)= \emptyset$ by Proposition \ref{simmetria}(3). We conclude by Proposition \ref{simmetria}(1).
%\end{proof}

\section{Duality of the Ap\'{e}ry Set of symmetric good semigroups}

The symmetry of a numerical semigroup $S$ can be characterized by the symmetry of its Ap\'ery Set with respect to its largest element: if we order the elements of $\Ap(S)$ in increasing order
$\Ap(S)=\{w_1, \dots, w_e\}$, then $S$ is symmetric if and only if $w_i+w_{e-i+1}=w_e$. 

Hence, there is a duality relation associating to each element $w_i$, the element $w_{e-i+1}$. In the case of a symmetric good semigroup we do not have this relation by choosing arbitrary elements, one from each level $A_i$ of $\Ap(S)$; but we find a more general duality relation associating the level $A_i$ to the level $A_{e-i+1}$, and involving both the elements of $\al \in A_i$ and the sets $\Delta^S(\g + \e - \al)$. After two preparatory lemmas, we define and prove this duality in Theorem \ref{duality}. 

In this Section, we denote as before $ \al^{\prime}:= \g + \e - \al. $

\begin{lemma} 
\label{du1}
Let $S \subseteq S_1 \times S_2 \subseteq \N^2$ be a symmetric good semigroup. Let $ \Ap(S)=\bigcup_{i=1}^e A_i $ be the Ap\'{e}ry Set of $S$. If $\al \in A_{e-i+1}$, then for every $j < i$, $$ \Delta^S(\al^{\prime}) \cap A_j = \emptyset. $$
\end{lemma}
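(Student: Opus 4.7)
My plan is a proof by contradiction adapting the chain-construction technique from the proof of Proposition~\ref{caso3}. Suppose there exists $\be \in \Delta^S(\al') \cap A_j$ with $j < i$ and $\al \in A_{e-i+1}$. By Proposition~\ref{simmetria}(4), $\be \in \Ap(S)$. Assuming without loss of generality that $\be \in \Delta_1(\al')$, a direct substitution using $\al' = \g+\e-\al$ and $\be' = \g+\e-\be$ yields $\al \in \Delta_1(\be')$, together with $\al_1 + \be_1 = \gamma_1 + e_1$ and $\al_2 + \be_2 > \gamma_2 + e_2$; in particular the dual-pair relation is symmetric in $\al$ and $\be$, and Proposition~\ref{simmetria}(6) gives $\Delta_1^S(\al), \Delta_1^S(\be) \subseteq \Ap(S)$.

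The core construction assembles a saturated chain in $S$ from $\boldsymbol{0}$ to $\g+\e+\boldsymbol{1}$ in three pieces. First, iterating Proposition~\ref{caso3}, I build an ascending Apéry chain $\boldsymbol{0} = \de^{(1)} \leq \cdots \leq \de^{(j)} = \be$ with $\de^{(k)} \in A_k$. Second, iterating Lemma~\ref{basic-Ap}(1) starting from $\al$, I produce elements $\al = \al^{(e-i+1)}, \al^{(e-i+2)}, \dots, \al^{(e)}$ with $\al^{(k)} \in A_k$, each weakly above the previous. Third, I connect $\be$ to $\al$ by a chain lying in $\e+S$: the identities $\be_1 = \al'_1$ and $\al_1 = \be'_1$, together with axiom~(G2) and the inclusions just established, allow me to route from the vertical line $x = \be_1$ to $x = \al_1$ through elements of $\e+S$. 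Saturating the assembled chain in $S$ then only inserts elements of $\e+S$ (as in the argument concluding the proof of Theorem~\ref{livelli}), producing a saturated chain in $S$ with exactly $j+i$ Apéry elements.

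When $j+i < e$, the length identity $d_S(\boldsymbol{0},\g+\e+\boldsymbol{1}) = d_{\e+S}(\e,\g+\e+\boldsymbol{1}) + e$ forces, after removing the Apéry elements, a chain in $\e+S$ of length $d_S - (j+i) > d_{\e+S}$, contradicting the distance bound. The case $j+i \geq e+3$ reduces to the previous one by the symmetric swap $(\al,\be,i,j) \mapsto (\be,\al,e-j+1,e-i+1)$, for which the new parameters satisfy $j'+i' = 2e-i-j+2 < e$.

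The main obstacle is the narrow edge band $j+i \in \{e, e+1, e+2\}$, where the chain-counting does not produce a strict inequality. The critical sub-case $j+i = e+1$, in which $\al, \be \in A_{e-i+1}$ lie in the same level, will be ruled out using the identities $\al_1+\be_1 = \gamma_1+e_1$ and $\al_2+\be_2 > \gamma_2+e_2$ together with Remark~\ref{rem1} and the $\le\le$-maximality condition defining $A_{e-i+1}$: these force an ordering between $\al$ and $\be$ incompatible with them both being in the same level. The neighboring sub-cases $j+i = e$ and $j+i = e+2$ are dealt with by enriching the chain with an additional Apéry element from a missing level, obtained via Lemma~\ref{altro-Ap}, so that the saturated chain exhibits strictly fewer than $e$ Apéry elements and the same distance contradiction applies.
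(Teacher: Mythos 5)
Your overall strategy (a global chain--counting contradiction in the spirit of Proposition~\ref{caso3} and Theorem~\ref{livelli}) is genuinely different from the paper's proof, which argues by induction on $i$: it applies Lemma~\ref{basic-Ap}(1) to $\al$ to get $\te\in A_{e-i+2}$ with $\al\ll\te$ or $\al=\te\wedge\de$, dualizes to $\te'\ll\al'$ (resp.\ $\al'\in\Delta_1(\te')\cap\Delta_2(\de')$), and compares levels locally via Lemma~\ref{altro-Ap}(3). Unfortunately your construction has a gap at its very first load-bearing step: the three-piece chain $\boldsymbol{0}\to\be\to\al\to\g+\e+\boldsymbol{1}$ requires $\be\le\al$, and nothing in the contradiction hypothesis gives this. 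From $\be\in\Delta_1(\al')$ you only know $\be_1=\gamma_1+e_1-\al_1$ and $\be_2>\gamma_2+e_2-\al_2$; when $\al$ lies in a low level (large $i$), $\al$ is small and $\al'$ is large, so $\be$ is typically incomparable with, or larger than, $\al$. Without comparability there is no increasing chain through both points, and the whole counting scheme collapses; "routing from the vertical line $x=\be_1$ to $x=\al_1$ through $\e+S$" cannot repair this, since a chain is totally ordered.

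Even setting that aside, two further steps are not established. First, the count of "exactly $j+i$ Apéry elements" is unsupported: Proposition~\ref{caso3} and Lemma~\ref{basic-Ap}(1) give $\le$-comparable elements in consecutive levels, but not elements consecutive in $\Ap(S)$, so saturating in $S$ may insert additional Apéry elements (the argument in Theorem~\ref{livelli} avoids this precisely by using Lemma~\ref{basic-Ap}(6) to pick consecutive Apéry elements), and likewise there is no argument that the connector from $\be$ to $\al$ can be kept inside $\e+S$. Second, the "edge band" $j+i\in\{e,e+1,e+2\}$ is exactly where the distance inequality has no force, and it covers perfectly generic configurations (the two levels being nearly complementary); your treatment there is only a sketch — the claim that $\al_1+\be_1=\gamma_1+e_1$, $\al_2+\be_2>\gamma_2+e_2$ and $\le\le$-maximality "force an ordering incompatible with the same level" is not proved (same-level and even $\be$ with the stated coordinate relations are not obviously contradictory without further work), and "enriching the chain with an additional Apéry element from a missing level" names no concrete element or lemma application. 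As it stands the proposal does not constitute a proof; the paper's inductive, local argument via Lemmas~\ref{basic-Ap}(1) and \ref{altro-Ap}(3) is the route that actually closes these cases.
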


\begin{proof}
We use induction on $i$. For $i=1$, the result is clear. %and $\al \in A_e$, we have $\Delta^S(\al^{\prime})$ 
Let $\al \in A_{e-i+1}$. We separate the proof in two cases: \\
\bf Case 1: \rm Assume $\al \ll \te $ for some $\te \in A_{e-i+2}$. By inductive hypothesis $ \Delta^S(\te^{\prime}) \cap A_j = \emptyset $ for every $j < i-1$. Since $\al \ll \te $, it follows that $\te^{\prime} \ll \al^{\prime} $ and hence for every $ \de \in \Delta^S(\al^{\prime}) $ there exists $ \be \in \Delta^S(\te^{\prime}) $ such that either $\be \ll \de$ or there exists $\om = \de \wedge \be \in \Delta^S(\te^{\prime}) \subseteq \Ap(S)$. In the first case the level of $\be$ in $\Ap(S)$ is smaller than the level of $\de$. In the second case, as a consequence of Lemma \ref{altro-Ap}(3), the element $ \om  $ is in a level of $\Ap(S)$ smaller than the level of $\de$. Hence we can find elements in $ \Delta^S(\te^{\prime}) $ in some level smaller than the level of any element of $ \Delta^S(\al^{\prime}) $. It follows that $ \Delta^S(\al^{\prime}) \subseteq \bigcup_{j \geq i} A_j $ and hence the thesis. \\
\bf Case 2: \rm Now assume $ \al = \te \wedge \de $ with $  \te \in \Delta^S_1(\al) \cap A_{e-i+2} $ and $ \de \in \Delta^S_2(\al) \cap A_{e-i+2}$. Hence $ \al^{\prime} \in \Delta_1(\te^{\prime}) \cap \Delta_2(\de^{\prime}). $ Assuming $ \Delta^S_1(\al^{\prime}) \neq \emptyset $ and taking $\om \in \Delta^S_1(\al^{\prime})$, we can find an element $ \be \in \Delta^S(\de^{\prime}) $ such that either $\be \ll \om$ or there exists $\be^1 = \om \wedge \be \in \Delta^S(\de^{\prime}) \subseteq \Ap(S)$ (it is possible to have $ \be^1= \al^{\prime} $). Using the same argument of Case 1, we show that one element among $\be$ and $\be^1$ is in a level of $\Ap(S)$ smaller than the level of $\om$ and therefore we get the same thesis of Case 1.
In case $ \Delta^S_2(\al^{\prime}) \neq \emptyset $ we can use the same argument to find the needed elements in $ \Delta^S(\te^{\prime}). $
%Now assume $ \al = \te^1 \wedge \te^2 $ with $  \te^1 \in \Delta^S_1(\al) \cap A_{e-i+2} $ and $ \te^2 \in \Delta^S_2(\al) \cap A_{e-i+2}$. Hence $ \al^{\prime} \in \Delta_1((\te^1)^{\prime}) \cap \Delta_2((\te^2)^{\prime}). $ Assuming $ \Delta^S_1(\al^{\prime}) \neq \emptyset $ and taking $\de \in \Delta^S_1(\al^{\prime})$, we can find an element $ \be \in \Delta^S((\te^2)^{\prime}) $ such that either $\be \ll \de$ or there exist $\om = \de \wedge \be \in \Delta^S((\te^2)^{\prime}) \subseteq \Ap(S)$ (it is possible to have $ \om= \al^{\prime} $). Using the same argument of Case 1, we show that either $\be$ or $\om$ are in a level of $\Ap(S)$ smaller than the level of $\de$ and therefore we get the same thesis of Case 1.
%In the case in which $ \Delta^S_2(\al^{\prime}) \neq \emptyset $ we can use the same argument to find the needed elements in $ \Delta^S((\te^1)^{\prime}). $
\end{proof}

\begin{lemma} 
\label{du2}
Let $S \subseteq S_1 \times S_2 \subseteq \N^2$ be a symmetric good semigroup. Let $ \Ap(S)=\bigcup_{i=1}^e A_i $ be the Ap\'{e}ry Set of $S$. If $\al \in A_{i}$, then $$ \Delta^S(\al^{\prime}) \cap A_{e-i+1} \neq \emptyset. $$ 
\end{lemma}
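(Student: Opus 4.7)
The plan is to proceed by induction on $i$. The base case $i=1$ is immediate: since $A_1=\{\boldsymbol{0}\}$, we have $\al'=\g+\e$, and Remark \ref{rem1} gives $\Delta^S(\g+\e)=A_e=A_{e-1+1}$, which is non-empty.

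For the inductive step, let $\al\in A_i$ with $i\geq 2$ and assume the statement for all smaller levels. Proposition \ref{caso3} supplies $\te\in A_{i-1}$ with $\te\leq\al$, and the inductive hypothesis applied to $\te$ yields an element $\boldsymbol{\mu}\in\Delta^S(\te')\cap A_{e-i+2}$; the inequality $\te\leq\al$ translates into $\te'\geq\al'$. I would then split according to whether $\te\ll\al$ (so $\te'\gg\al'$) or $\te$ and $\al$ share a coordinate (so $\te'$ and $\al'$ share the same coordinate), paralleling the case analysis of the proof of Lemma \ref{du1}. In the shared-coordinate case, provided $\boldsymbol{\mu}$ can be chosen on the common side, $\boldsymbol{\mu}$ already belongs to $\Delta^S(\al')$ at level $e-i+2$, and descending along the shared coordinate toward $\al'$, using Lemma \ref{basic-Ap}(5) and Lemma \ref{du1}, produces the required element at the strictly lower level $e-i+1$.

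The main obstacle is the case $\te\ll\al$ (together with the ``wrong-side'' sub-case of the shared-coordinate case, where $\boldsymbol{\mu}$ is forced into $\Delta_j^S(\te')$ for the coordinate $j$ not shared with $\al'$). My plan there is to work directly with a minimal element $\om$ of $\Delta_1^S(\al')$ or $\Delta_2^S(\al')$; at least one of these is non-empty because $\al\in\Ap(S)$ forces $\Delta^S(\al')\neq\emptyset$ by Proposition \ref{simmetria}(3). Such an $\om$ has level $j\geq e-i+1$ by Lemma \ref{du1}, and the crux is to exclude $j>e-i+1$. For this I plan to exploit the involution $\om\in\Delta(\al')\iff\al\in\Delta(\om')$, which places $\al\in A_i$ inside $\Delta^S(\om')$; combining this with Lemma \ref{du1} applied to $\om$ and with Lemma \ref{altro-Ap}(3,4)---which controls the level of minimal elements of the relevant $\Delta^S$ sets---I expect to pin $\om$ to level exactly $e-i+1$. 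Should this minimality argument fall short, a saturated-chain subargument mirroring the proof of Theorem \ref{livelli}, linking a chain from $\te$ to $\al$ in $S$ with a chain from $\boldsymbol{\mu}$ across levels up to $\om$, should close any remaining gap.
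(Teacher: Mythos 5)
There is a genuine gap at the two points where your argument must exclude levels strictly above $e-i+1$. First, in the shared-coordinate case, after placing $\boldsymbol{\mu}\in\Delta^S(\te')\cap A_{e-i+2}$ on the common line (hence in $\Delta^S(\al')$), you claim that descending along that line toward $\al'$ produces an element of level $e-i+1$. Nothing forces the level to drop: Lemma \ref{basic-Ap}(5) only says that consecutive elements on the line lie in the same or in consecutive levels, and Lemma \ref{du1} only gives the lower bound $e-i+1$; a priori even the minimal element of $\Delta^S_1(\al')$ could still lie in $A_{e-i+2}$ (that minimal elements of $\Delta^S(\al')$ lie in $A_{e-i+1}$ is exactly Corollary \ref{cordual}, which the paper deduces \emph{from} Lemma \ref{du2}, so it cannot be invoked here). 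Second, and more seriously, in the case $\te\ll\al$ (and in your ``wrong-side'' subcase) your central idea --- take $\om$ minimal in $\Delta^S_1(\al')$ or $\Delta^S_2(\al')$, use the involution $\om\in\Delta(\al')\Leftrightarrow\al\in\Delta(\om')$, and apply Lemma \ref{du1} to $\om$ --- cannot pin down the level: if $\om\in A_j$, Lemma \ref{du1} applied to $\om$ says $\Delta^S(\om')$ avoids all levels below $e-j+1$, and since $\al\in\Delta^S(\om')\cap A_i$ this only yields $i\geq e-j+1$, i.e.\ $j\geq e-i+1$ --- the same one-sided bound you already had from Lemma \ref{du1}, with no way to rule out $j>e-i+1$. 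The appeal to Lemma \ref{altro-Ap}(3,4) is left vague, and the fallback ``saturated-chain subargument mirroring Theorem \ref{livelli}'' is not an argument.

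For comparison, the paper's proof keeps the inductive hypothesis as the source of the missing \emph{upper} bound. In the case $\te\ll\al$ one has $\te'\gg\al'$, so the element $\be\in\Delta^S(\te')\cap A_{e-i+2}$ given by induction dominates $\al'$, and the comparison argument of Case 1 of Lemma \ref{du1} (taking infima with elements of $\Delta^S(\al')$ and using Lemma \ref{altro-Ap}(3)) produces an element of $\Delta^S(\al')$ in a level strictly below $e-i+2$, hence in $A_{e-i+1}$ by Lemma \ref{du1}. In the shared-coordinate case the paper does not hope that $\boldsymbol{\mu}$ lands on the good side: it first replaces $\te$, when necessary, by an element $\om\in A_{i-1}$ with $\al\in\Delta^S_1(\om)$ and $\Delta^S_2(\om')\neq\emptyset$, which is obtained from Proposition \ref{simmetria}(6) combined with Lemma \ref{altro-Ap}(5), and only then runs the Case-2 comparison of Lemma \ref{du1} together with the inductive hypothesis. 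These two ingredients --- the strict bound below $e-i+2$ coming from the induction via the comparison argument, and the replacement step handling the wrong side --- are exactly what your proposal is missing.
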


\begin{proof}
Again we use induction on $i$. For $i=1$, the result follows since $A_1=\lbrace \boldsymbol{0} \rbrace$ and $\Delta^S(\boldsymbol{0}^{\prime})= \Delta^S(\g + \e) = A_e.$ 
Let $\al \in A_{i}$. By Lemma \ref{du1}, we have that $ \Delta^S(\al^{\prime}) \subseteq \bigcup_{j \geq e-i+1} A_j $. By Proposition \ref{caso3} we have that $\al \geq \te $ for some $\te \in A_{i-1}$. We separate the proof in two cases: \\
\bf Case 1: \rm Assume $\al \gg \te $. By inductive hypothesis, we know that there exists $ \be \in \Delta^S(\te^{\prime}) \cap A_{e-i+2} $. Using the argument of the proof of Lemma \ref{du1}(Case 1) we can show that there exists some element $ \om \in \Delta^S(\al^{\prime})$ which is in a level of $\Ap(S)$ smaller than $ A_{e-i+2}. $ Thus we must have $\om \in A_{e-i+1}.$ \\
%\bf Case 2(old): \rm There exists $\te \in A_{i-1}$ such that $\te= \min(\al, \be)$ for some $\be \in A_i$. \\
\bf Case 2: \rm Now assume $\al \in \Delta^S(\te)$. Without loss of generality, we say that $\al \in \Delta^S_1(\te)$. Now, if $ \Delta^S_1(\te) \nsubseteq \Ap(S) $, by Proposition \ref{simmetria}(6) we have $ \Delta^S_1(\te^{\prime}) = \emptyset $ and therefore $ \Delta^S_2(\te^{\prime}) \neq \emptyset $. Otherwise, if $ \Delta^S_1(\te) \subseteq \Ap(S) $, applying Lemma \ref{altro-Ap}(5) we find an element $\om \in (\Delta^S_1(\te) \cap A_{i-1}) \cup \lbrace \te \rbrace$ such that $ \Delta^S_2(\om) \subseteq \Ap(S) $ and hence again by Proposition \ref{simmetria}(6), $ \Delta^S_2(\om^{\prime}) \neq \emptyset $. In both cases we have found an element $ \om \in A_{i-1} $ such that $\al \in \Delta^S_1(\om)$ and $ \Delta^S_2(\om^{\prime}) \neq \emptyset $. Proceeding like in Case 2 of Lemma \ref{du1} and using the inductive hypothesis, we find an element $ \be \in \Delta^S(\al^{\prime}) $ which is in a level of $\Ap(S)$ smaller than $ A_{e-i+2}. $ Thus we must have $\be \in A_{e-i+1}$ as in Case 1. 
%\bf Case 3: \rm \bf Esistono altri casi possibili? (citare Corollary \ref{caso3})
\end{proof}

\begin{theorem} 
\label{duality}
Let $S \subseteq S_1 \times S_2 \subseteq \N^2$ be a good semigroup. Let $ \Ap(S)=\bigcup_{i=1}^e A_i $ be the Ap\'{e}ry Set of $S$. Denote $$ A_i^{\prime} = \left( \bigcup_{\om \in A_i} \Delta^S(\om^{\prime}) \right) \setminus \left( \bigcup_{\om \in A_j \mbox{, } j < i} \Delta^S(\om^{\prime}) \right).$$ The following assertions are equivalent:
\begin{enumerate}
\item $S$ is symmetric.
\item  $A_i^{\prime}= A_{e-i+1}$ for every $i=1, \ldots, e.$
\end{enumerate}
\end{theorem}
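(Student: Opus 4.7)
I would prove the two implications separately. The forward direction $(1)\Rightarrow(2)$ follows from Lemmas \ref{du1} and \ref{du2} via a short involution argument; the converse requires substantially more work and is where I expect the main obstacle to lie.

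For $(1)\Rightarrow(2)$, the key observation is that the map $\al \mapsto \al' = \g+\e-\al$ satisfies an involutive compatibility with the $\Delta$-relation: for any $\al,\om \in \Z^2$, one has $\al \in \Delta_h(\om')$ if and only if $\om \in \Delta_h(\al')$, because the defining equality $\al_h+\om_h = \gamma_h+e_h$ and the strict inequality $\al_{3-h}+\om_{3-h} > \gamma_{3-h}+e_{3-h}$ are both symmetric in $\al$ and $\om$. Assuming symmetry, Proposition \ref{simmetria}(4) gives $\Delta^S(\om')\subseteq \Ap(S)$ for every $\om \in \Ap(S)$, so the involution restricts to a symmetric relation on $\Ap(S)$. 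Unfolding the definition, $\de \in A_i'$ becomes, via the involution, the statement that $i$ is the minimum level $j$ with $\Delta^S(\de')\cap A_j\neq\emptyset$. But if $\de \in A_m$, Lemma \ref{du1} gives that this minimum is $\geq e-m+1$, and Lemma \ref{du2} gives that it is $\leq e-m+1$, so it equals $e-m+1$ exactly. Hence $\de\in A_i'$ iff $m=e-i+1$, i.e.\ $A_i' = A_{e-i+1}$.

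For $(2)\Rightarrow(1)$, I plan to proceed as follows. The assumption forces the collection $\{A_i'\}$ to partition $\Ap(S)$, and in particular every $A_i'$ lies inside $\Ap(S)$; since otherwise some element of $\Delta^S(\om')\cap(\e+S)$ with $\om\in A_i$ would contribute to $A_i'$ but not to $A_{e-i+1}\subseteq\Ap(S)$, this already forces $\Delta^S(\om')\subseteq\Ap(S)$ whenever $\om\in\Ap(S)$. Via the involution, this recovers parts (3) and (4) of Proposition \ref{simmetria} restricted to $\Ap(S)$. To upgrade this to the global symmetry of $S$, I would exploit the decomposition $S=\Ap(S)\sqcup(\e+S)$ and a translation-by-$\e$ argument: for arbitrary $\al\in\N^2$, after finitely many shifts by $\e$ the question of whether $\al\in S$ reduces to a question about elements of $\Ap(S)$ whose dualities are now pinned down, and axioms (G1)--(G2) allow the corresponding $\Delta^S$-statement about $\g-\al$ to be transported back.

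The main obstacle I expect is this reverse direction. The forward implication is essentially a local computation on the finite partition structure of $\Ap(S)$ enabled by the involution, whereas the converse must bridge level-wise equalities on $\Ap(S)$ to a global symmetry statement on $\N^2$, handling in particular the infinite lines of $\Ap(S)$ classified in Theorem \ref{infiniti}. I anticipate needing a downward induction from the conductor together with the saturated-chain and distance-function techniques developed in Theorem \ref{livelli} and Proposition \ref{caso3} to ensure that the duality on the Ap\'ery Set propagates uniformly into the statement defining a symmetric good semigroup.
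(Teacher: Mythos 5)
Your forward implication $(1)\Rightarrow(2)$ is correct, and it in fact streamlines the paper's argument: the paper proves $A_i'=A_{e-i+1}$ by induction on $i$, whereas your explicit involution $\de\in\Delta_h(\om')\Leftrightarrow\om\in\Delta_h(\de')$ (used only implicitly in the paper) turns membership in $A_i'$ into the statement that $i$ is the least level meeting $\Delta^S(\de')$, and Lemmas \ref{du1} and \ref{du2} pin that minimum to exactly $e-m+1$ when $\de\in A_m$; together with Proposition \ref{simmetria}(4), which guarantees $A_i'\subseteq\Ap(S)$, this yields both inclusions at once with no induction. That part stands.

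The gap is the converse, which you only sketch --- and, contrary to your expectation, it is the short direction; none of the machinery you invoke (Theorem \ref{infiniti}, Proposition \ref{caso3}, saturated chains and distances) is needed. Two concrete points. First, your justification that $(2)$ forces $\Delta^S(\om')\subseteq\Ap(S)$ has a slip: an element $\de\in\Delta^S(\om')\cap(\e+S)$ with $\om\in A_i$ need not lie in $A_i'$ (it may be removed by some $\te\in A_j$, $j<i$), but it does lie in $A_j'$ for the least such $j$, which still contradicts $A_j'=A_{e-j+1}\subseteq\Ap(S)$; it should be phrased this way. Second, the global symmetry need not be ``propagated'' through the level structure at all: argue by contraposition. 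In any good semigroup $\al\in S$ already implies $\Delta^S(\g-\al)=\emptyset$ (a point of $\Delta^S(\g-\al)$ added to $\al$ would land in $\Delta^S(\g)=\emptyset$), so if $S$ is not symmetric there is $\al\notin S$ with $\Delta^S(\g-\al)=\emptyset$. Since $\Delta^S(\g-\al-k\e)=\emptyset$ for every $k$ and $\al+k\e\in S$ for $k\gg 0$, after replacing $\al$ by a suitable $\al+k\e$ one may assume $\al\notin S$ and $\al+\e\in\Ap(S)$, say $\al+\e\in A_i$. Then $\al+\e$ lies in no $A_j'$: if $\al+\e\in\Delta^S(\be')$ for some $\be\in\Ap(S)$, your involution gives $\be\in\Delta^S(\g+\e-(\al+\e))=\Delta^S(\g-\al)=\emptyset$, a contradiction. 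Hence $A_{e-i+1}'\neq A_i$ and $(2)$ fails. This is exactly the paper's argument; the only consequence of $(2)$ it uses is that the sets $A_j'$ cover $\Ap(S)$, so your plan of ``recovering Proposition \ref{simmetria}(3) and (4) on $\Ap(S)$'' and then doing a downward induction from the conductor is more than is required and, as written, does not yet constitute a proof.
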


\begin{proof}
(1) $\to$ (2):
Assume $S$ to be symmetric and notice that in this case by Proposition \ref{simmetria}(4), $A_i^{\prime} \subseteq \Ap(S)$. As a consequence of the definition of the levels, $ A_e^{\prime}= A_1 $ and $ A_1^{\prime}= A_e $, thus we can assume by induction $ A_j^{\prime}= A_{e-j+1} $ for $j < i$ and $  A_j= A_{e-j+1}^{\prime} $ for $j > e-i+1$. 

We first show $A_i^{\prime} \subseteq A_{e-i+1}.$ Let $\de \in A_i^{\prime}$, hence $\de \in \Delta^S(\om^{\prime})$ for some $\om \in A_i$ and $ \de \not \in \bigcup_{\te \in A_j \mbox{, } j < i} \Delta^S(\te^{\prime}) $. Since $\om \in A_i$, by Lemma \ref{du1}, $\de \not \in A_j$ for $j < e-i+1.$ By way of contradiction assume $\de \in A_j$ for some $j > e-i+1$. Thus, by inductive hypothesis $  A_j= A_{e-j+1}^{\prime} $ and $ e-j+1 < i$. Hence $\de \in \Delta^S(\te^{\prime})$ for some $\te \in A_{e-j+1}$, but this is a contradiction since $\de \in A_i^{\prime}$ and $ e-j+1 < i$.

Now we show the other containment $A_{e-i+1} \subseteq A_i^{\prime}.$ Let $\om \in A_{e-i+1}$ and take $\de \in \Delta^S(\om^{\prime}) \cap A_i$ which does exist by Lemma \ref{du2}. Hence $\om \in \Delta^S(\de^{\prime})$. We need to prove that $ \om \not \in \Delta^S(\te^{\prime}) $ for every $\te \in A_j$ with $j<i.$ If by way of contradiction, we assume $ \om  \in \Delta^S(\te^{\prime}) $ for some $\te \in A_j$ with $j<i,$ we can take a minimal $j$ such that this happens, and hence by definition of $A_j^{\prime}$ and by inductive hypothesis, we get $\om \in A_j^{\prime}= A_{e-j+1}= A_h$ with $h> e-i+1$. But this is impossible since the levels of $\Ap(S)$ are disjoint.
With the same proof it is possible to show that $A_i= A_{e-i+1}^{\prime}$ and continue with the induction to prove (2).

\noindent (2) $\to$ (1): We argue by way of contradiction. Assuming that $S$ is not symmetric, we can find $\al \not \in S$ such that $ \Delta^S(\g - \al) = \emptyset. $ Since there exists a minimal $k \in \N$ such that $\al + k \e \in S$ and for every $k$, also $ \Delta^S(\g - \al - k\e) = \emptyset, $ we may assume, replacing $\al$ by $\al + k \e$, that $\al + \e \in \Ap(S).$
Assuming $\al + \e \in A_i$, we show that $\al + \e \not \in A_j^{\prime}$ for every $j$, and therefore $A_i \neq A_{e-i+1}^{\prime}.$ Indeed, $$ \emptyset= \Delta^S(\g - \al) = \Delta^S(\g + \e - (\al + \e)) $$ and, if $\al + \e \in \Delta^S(\be^{\prime})$ for some $\be \in \Ap(S)$, we would have $\be \in \Delta^S(\g + \e - (\al + \e))$ and this is a contradiction.
\end{proof}

%\newpage

\begin{corollary} 
\label{cordual}
Let $S \subseteq \N^2$ be a symmetric good semigroup and let $\al \in A_{e-i+1}$. The minimal elements of $\Delta^S(\al^{\prime})$ with respect to $\le$ are in $A_i.$
\end{corollary}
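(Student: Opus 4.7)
The plan is to combine the conclusions of Lemmas~\ref{du1} and~\ref{du2} with the ``in particular'' clause of Lemma~\ref{altro-Ap}(3). First, by Lemma~\ref{du1} applied to $\al\in A_{e-i+1}$, we have $\Delta^S(\al')\cap A_j=\emptyset$ for every $j<i$, so every element of $\Delta^S(\al')$ lies in $\bigcup_{\ell\geq i}A_\ell$; by Lemma~\ref{du2} applied to the same $\al$ (with the lemma's index taken to be $e-i+1$), there is at least one element $\om_0\in\Delta^S(\al')\cap A_i$. Since $\Delta^S(\al')=\Delta_1^S(\al')\cup\Delta_2^S(\al')$ and each arm is totally ordered by $\le$, there are at most two minimal elements of $\Delta^S(\al')$, namely the minima of the two non-empty arms.

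Without loss of generality, assume $\om_0\in\Delta_1^S(\al')$, and let $\om_1$ be the minimum of that arm. From $\om_1\le\om_0$ and the fact that both lie in $\Ap(S)$ (by Proposition~\ref{simmetria}(4) applied to $\al\in\Ap(S)$), Lemma~\ref{basic-Ap}(3) forces the level of $\om_0$ to be at least that of $\om_1$; combined with $\om_0\in A_i$ and $\om_1\in\bigcup_{\ell\geq i}A_\ell$, this pins down $\om_1\in A_i$. If the other arm $\Delta_2^S(\al')$ is empty, then $\om_1$ is the only minimal element of $\Delta^S(\al')$ and we are done; the symmetric case, in which the $A_i$-element produced by Lemma~\ref{du2} happens to sit on the horizontal arm, is handled identically.

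The main obstacle is the case when both arms are non-empty, since Lemma~\ref{du2} a priori only plants an $A_i$-element on one arm. I would handle it as follows: from $\Delta_1^S(\al'),\Delta_2^S(\al')\neq\emptyset$ together with Proposition~\ref{simmetria}(6) we obtain $\Delta_1^S(\al),\Delta_2^S(\al)\subseteq\Ap(S)$; Proposition~\ref{simmetria}(5) then gives $\al'\in S$, and Proposition~\ref{simmetria}(4) gives $\Delta^S(\al')\subseteq\Ap(S)$. These ingredients verify all the hypotheses of the ``in particular'' clause of Lemma~\ref{altro-Ap}(3) with the lemma's $\al$ taken to be $\om_1\in A_i$ and its $\de$ taken to be $\al'$: $\om_1$ shares its first coordinate with $\al'$ and sits strictly above it, $\al'\in S$, $\Delta^S(\al')\subseteq\Ap(S)$, and $\om_1$ is minimal in $\Delta_1^S(\al')$. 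The lemma's conclusion is precisely that the minimum $\om_2$ of $\Delta_2^S(\al')$ belongs to $A_i$, completing the argument.
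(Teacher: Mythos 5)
Your proof is correct and follows essentially the same route as the paper: Lemmas~\ref{du1} and~\ref{du2} together with Lemma~\ref{basic-Ap}(3) place the minimum of one arm in $A_i$, and the ``in particular'' clause of Lemma~\ref{altro-Ap}(3) then handles the minimum of the other arm. The only cosmetic difference is that you deduce $\al^{\prime}\in S$ via Proposition~\ref{simmetria}(5) and (6), where the paper gets it directly from property (G1) as $\al^{\prime}=\be\wedge\te$; your version also makes explicit the use of Proposition~\ref{simmetria}(4), which the paper leaves implicit.
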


\begin{proof}
By Lemma \ref{du1}, for every $j < i$, $ \Delta^S(\al^{\prime}) \cap A_j = \emptyset, $ while by Lemma \ref{du2}, $ \Delta^S(\al^{\prime}) \cap A_i \neq \emptyset. $ Hence there exists a minimal element $\be$ of $ \Delta^S(\al^{\prime}) $ in $A_i$.
If $\te$ is another minimal element of $\Delta^S(\al^{\prime})$, we clearly have $\al^{\prime} = \be \wedge \te \in S$, and hence $\te \in A_i$ by Lemma \ref{altro-Ap}(3).
\end{proof}

%            only marks, mark=text, mark options={scale=1,text mark={ \tiny 8}}, text mark as node=true

%The duality property of Theorem \ref{duality} is a characterization of the symmetric good semigroups, indeed a converse of Theorem \ref{duality} holds.

%\begin{theorem} 
%\label{converseduality}
%Let $S \subseteq \N^2 $ be a good semigroup and define the sets $A_i^{\prime}$ as in Theorem \ref{duality}. If for every $i=1,\ldots, e$, $A_i^{\prime}= A_{e-i+1},$ the semigroup $S$ is symmetric.
%\end{theorem}

%\begin{proof}

%\end{proof}

In the next theorem, we provide a specific sequence of elements of a good semigroup $S$, taken one from each level $A_i$, behaving like the elements of the Ap\'{e}ry Set of a numerical semigroup with respect to sums. Notice that this sequence may not be the unique having the required property, but we give here a canonical way to construct one.

\begin{theorem} 
\label{sequenza2}
Let $S \subseteq S_1 \times S_2 \subseteq \N^2$ be a symmetric good semigroup and let $ \Ap(S)=\bigcup_{i=1}^e A_i $ be the Ap\'{e}ry Set of $S$. Assume $e_1 \geq e_2$.  
\begin{enumerate}
\item If $e$ is even, there exists a sequence of elements $ \al_1, \al_2, \ldots, \al_e $ such that $$ \al_i \in A_i $$ and $$ \al_i + \al_{e-i+1} = \al_e. $$  
\item If $e$ is odd, set $e=2d-1$. Then, there exists a sequence of elements $ \al_1, \al_2, \ldots, \al_e, \be $ such that $$ \al_i \in A_i, \, \be \in A_d$$ and $$ \al_i + \al_{e-i+1} = \al_e $$ for $i \neq d$, and moreover $$ \al_d + \be =  \al_e$$
\end{enumerate}
\end{theorem}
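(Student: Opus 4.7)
The plan is to pick a suitable ``target'' element $\al_e \in A_e$ at the outset and then to construct, inductively, a chain $\al_1 < \al_2 < \cdots < \al_e$ so that each reflection $\al_e - \al_i$ automatically lands in the correct level $A_{e-i+1}$. A natural candidate is $\al_e := (\gamma_1 + e_1, \gamma_2 + e_2 + 1)$, the ``corner'' of the vertical arm of $A_e$ (it lies in $A_e$ by Lemma~\ref{basic-Ap}(7), since $\gamma_2 + e_2 + 1 > \gamma_2$). With this choice one computes $\al_e - \al_i = (\al_i)' + (0,1)$, where $(\al_i)' = \g + \e - \al_i$; in particular $\al_e - \al_i$ always lies in $\Delta_1((\al_i)')$, exactly one step above the dual point.

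The driving observation is the following: if $\al_i \in A_i$ is chosen so that the $\le$-minimum of $\Delta_1^S((\al_i)')$ is precisely $(\al_i)' + (0,1)$, then this minimum equals $\al_e - \al_i$, and Corollary~\ref{cordual} forces $\al_e - \al_i \in A_{e-i+1}$. Setting $\al_{e-i+1} := \al_e - \al_i$ then yields $\al_i + \al_{e-i+1} = \al_e$, as required. Equivalently, I need to select $\al_i \in A_i$ so that no element of $S$ sits strictly between $(\al_i)'$ and $(\al_i)' + (0,1)$ in the vertical line through $(\al_i)'_1$.

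I would construct the $\al_i$ by induction on $i$, extending a saturated chain in $\Ap(S)$ level by level, following the recipe of the proof of Theorem~\ref{livelli}. At each step Lemma~\ref{basic-Ap}(6) provides at least one candidate $\al_{i+1} \in A_{i+1}$ consecutive to $\al_i$ in $\Ap(S)$. Among these candidates I would select one whose dual admits the required vertical-successor property. To prove such a candidate always exists, I would use Proposition~\ref{simmetria}(4)--(6), which via the involution $\al \mapsto \al'$ transfers questions about $\Delta_2^S(\al) \subseteq \Ap(S)$ to questions about $\Delta_1^S(\al') = \emptyset$, together with Theorem~\ref{proiezione} and Corollary~\ref{c1} to control the structure of the vertical lines in the projections. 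For the odd case $e = 2d-1$, the middle level $A_d$ is paired with itself: I would pick $\al_d \in A_d$ with the vertical-successor property and define $\be := \al_e - \al_d$, which lies in $A_{e-d+1} = A_d$ by exactly the same Corollary~\ref{cordual} argument (with $\be = \al_d$ precisely when $2\al_d = \al_e$).

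The main obstacle, and the real substance of the proof, is verifying that the inductive step can always be completed, i.e.\ that at every level $i+1$ there is a candidate $\al_{i+1}$ which is consecutive to $\al_i$ in $\Ap(S)$ and whose dual $(\al_{i+1})'$ admits $(\al_{i+1})' + (0,1)$ as the minimum of $\Delta_1^S((\al_{i+1})')$. I expect this to reduce, via the duality $A_i' = A_{e-i+1}$ of Theorem~\ref{duality}, the rigid classification of infinite lines in Theorem~\ref{infiniti}, and the $\Delta_1/\Delta_2$ exchange of Proposition~\ref{simmetria}(6), to a purely structural statement about symmetric good semigroups. If the straightforward vertical alignment above should fail, the backup is to allow the $\le$-minimum of $\Delta^S((\al_i)')$ to lie in $\Delta_2$ instead (choosing $\al_e$ on the horizontal arm of $A_e$ for such an $i$), and then to glue the two local constructions using the palindromic structure that the relation $\al_i + \al_{e-i+1} = \al_e$ forces on the step vectors $\al_{j+1} - \al_j$ of the chain.
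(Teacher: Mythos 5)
You have the right duality mechanism -- reflect through $\g+\e$ with a vertical shift and use Corollary \ref{cordual} to pin down the level of the reflected point -- and that is indeed the engine of the paper's proof. But your write-up has a genuine gap, and you name it yourself: the existence, in each level $A_i$ (for $i$ up to the middle), of an element $\al_i$ with $\al_i'+(0,1)\in S$ is never proved, and nothing in your outline reduces it to the quoted results; ``I expect this to reduce to a purely structural statement'' is the whole substance of the theorem. Your normalization is also too rigid: fixing $\al_e=\g+\e+(0,1)$ at the outset forces the vertical shift to be exactly $1$ simultaneously for every pair, and there is no reason a symmetric good semigroup should permit this. The paper sidesteps both problems at once: it takes $\al_i:=\min\Delta_1^S(\omega_i,0)$, where $\omega_1<\dots<\omega_{e_1}$ is the Ap\'ery set of the projection $S_1$, proves $\al_i\in A_i$ via Corollary \ref{c1}, Theorem \ref{infiniti} and Corollary \ref{cordual}, and then uses the fact that the dual column $\Delta_1^S(\g+\e-\al_i)$ is an \emph{infinite} line eventually contained in $A_{e-i+1}$, so that \emph{any} sufficiently large vertical shift lands in the right level; it then takes one uniform shift $H$ (the maximum of the finitely many minimal shifts) and sets $\al_{e-i+1}:=\g+\e-\al_i+(0,H)$ and $\al_e:=\g+\e+(0,H)$. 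The hypothesis $e_1\ge e_2$ is used exactly here: every partner level $A_{e-i+1}$ with $i\le e_1$ contains an infinite vertical line, so a common vertical shift exists -- this is the structural input your plan is missing, and it removes any need for your delicate ``vertical-successor'' property.

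Your backup plan does not repair the gap: the relation $\al_i+\al_{e-i+1}=\al_e$ requires one and the same $\al_e$ for all pairs, so you cannot put $\al_e$ on the vertical arm for some indices and on the horizontal arm for others, and when $e_1>e_2$ the levels $A_{e_2+1},\dots,A_{e_1}$ contain no infinite horizontal line (Theorem \ref{infiniti}), so a horizontal reflection cannot be made uniform there; the proposed ``gluing'' is not an argument. Also note that the chain condition $\al_1<\al_2<\dots<\al_e$ is not part of this statement (it is Theorem \ref{anelloquoziente} that needs a chain), so building the $\al_i$ by extending saturated chains level by level only adds constraints without helping. In short: keep your reflection step, which is essentially the paper's, but replace the shift-$1$ normalization by a uniform shift chosen after the fact, and replace the unproved level-by-level selection by the explicit choice $\al_i=\min\Delta_1^S(\omega_i,0)$, whose membership in $A_i$ the corollaries you cite actually deliver.
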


\begin{proof}
Let $\Ap(S_1)= \lbrace \omega_1=0, \omega_2, \ldots, \omega_{e_1} \rbrace$ be the Ap\'{e}ry Set of $S_1$ with elements listed in increasing order.
For $i=1, \ldots, e_1$ set $$\al_{i}:= \min \, \Delta^S_1(\omega_i, 0).$$ We observe that, defined in this way, $\al_i \in A_i$, since, by Corollary \ref{c1}, the set $\Delta^S_1(\omega_i^{\prime}, 0) $ is eventually contained in $\Ap(S)$ and in particular, by Theorem \ref{infiniti} it is eventually contained in $ A_{e-i+1} $. Hence, we get $\al_i \in A_i$ by Corollary \ref{cordual}.  Moreover, there exists a minimal $h_i \geq 0$ such that $ \g + \e -\al_i + (0, h_i) \in A_{e-i+1}. $

Call $H:= \max \lbrace h_i \rbrace$ and define again for $i=1, \ldots, e_1$, $$ \al_{e-i+1}:= \g + \e -\al_i + (0, H).$$ It follows that $ \al_{e-i+1} \in A_{e-i+1} $ and that $ \al_i + \al_{e-i+1} = \g + \e + (0, H)= \al_{e}. $ The second assertion is proved in the same way by defining $ \be:= \g + \e -\al_d + (0, H).$
\end{proof}

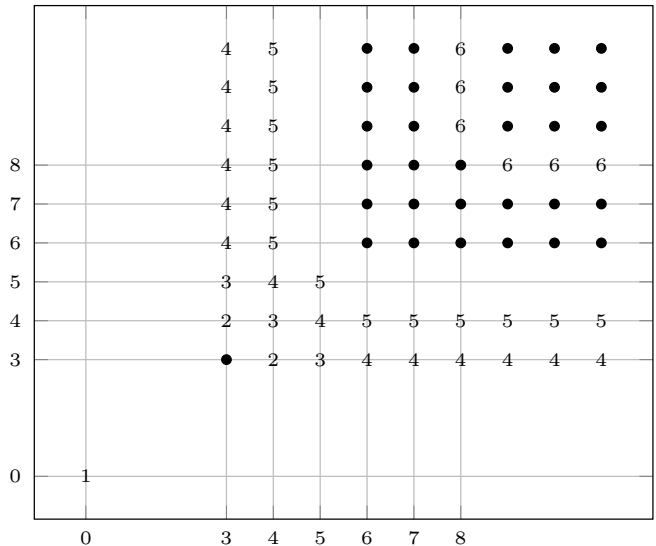
\begin{figure}[h]
 
 \begin{tikzpicture}[scale=1.2]
	\begin{axis}[grid=major, ytick={0,3,4,5,6,7,8}, xtick={0,3,4,5,6,7,8}, yticklabel style={font=\tiny}, xticklabel style={font=\tiny}]
	%\addplot[ style=dotted, very thick]coordinates{ (6,9) (6,11)};
	 %\addplot[ style=dotted, very thick]coordinates{ (9,6) (11,6)}; \addplot[ style=dotted, very thick]coordinates{ (9,7) (11,7)}; \addplot[ style=dotted, very thick]coordinates{ (7,9) (7,11)}; 
%\addplot [pattern = north east lines, draw=white]coordinates{ (9,9) (9,11) (11,9) (11,11)  };
\addplot[only marks] coordinates{ (3,3) (6,6) (6,7) (6,8) (7,6) (7,7) (7,8) (8,8) (8,7) (8,6) (6,9) (6,10)(6,11) (9,6) (10,6) (11,6) (7,9) (7,10)(7,11) (9,7) (10,7) (11,7) (9,9) (9,10) (9,11) (10,10) (10,9) (10,11) (11,9) (11,10)(11,11) }; 
 \addplot[only marks, mark=text, mark options={scale=1,text mark={ \tiny 1}}, text mark as node=true] coordinates{(0,0)}; %1
 \addplot[only marks, mark=text, mark options={scale=1,text mark={ \tiny 2}}, text mark as node=true] coordinates{ (4,3) (3,4)}; %2
 \addplot[only marks, mark=text, mark options={scale=1,text mark={ \tiny 3}}, text mark as node=true] coordinates{ (3,5) (4,4) (5,3)}; %3
 \addplot[only marks, mark=text, mark options={scale=1,text mark={ \tiny 6}}, text mark as node=true] coordinates{ (8,9) (8,10) (8,11) (9,8) (10,8) (11,8)  }; %6
 \addplot[only marks, mark=text, mark options={scale=1,text mark={ \tiny 4}}, text mark as node=true] coordinates{(3,6) (3,7) (3,8) (3,9) (3,10) (3,11) (6,3) (7,3) (8,3) (9,3) (10,3) (11,3) (5,4) (4,5) }; %4
% \addplot[only marks, mark=x] coordinates{(4,6) (4,7) (4,8) (4,9) (4,10) (4,11) (6,4) (7,4) (8,4) (9,4) (10,4) (11,4) (5,5)}; %5
\addplot[only marks, mark=text, mark options={scale=1,text mark={ \tiny 5}}, text mark as node=true] coordinates{(4,6) (4,7) (4,8) (4,9) (4,10) (4,11) (6,4) (7,4) (8,4) (9,4) (10,4) (11,4) (5,5)};
 \end{axis}
  \end{tikzpicture}
  \caption{\scriptsize An example of a symmetric good semigroup whose projections are not symmetric numerical semigroups. It is a good example to check the duality property stated in Theorem \ref{duality}. %The levels of the Ap\'{e}ry Set of this good semigroup are indicated by: $ A_1 = \diamond $, $ A_2 = \circ $, $ A_3 = \square $, $ A_4 = \bigtriangleup $, $ A_5 = \times $, $A_6= \diamond$.
  }
\label{semdual}
\end{figure}

We conclude giving a quite surprising result about symmetric good semigroup with large conductor.

\begin{prop} 
\label{largesemigroups}
Let $S \subseteq S_1 \times S_2 \subseteq \N^2$ be a symmetric good semigroup and let $ \Ap(S)=\bigcup_{i=1}^e A_i $ be the Ap\'{e}ry Set of $S$. Assume $e_1 \geq e_2$ and $$ \gamma_1 > 2f(S_1) + e_1 $$ where $f(S_1)$ denotes the Frobenius number of $S_1.$ Then, $e_1= e_2.$
\end{prop}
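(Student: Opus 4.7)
My plan is to exploit Theorem~\ref{proiezione} and the duality of Section~4 to reduce the problem to controlling the sizes of the Ap\'{e}ry sets of the two projections.

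First, I would draw the consequences of the hypothesis on $\Ap(S_1)$. For every $\omega \in \Ap(S_1)$ one has $\omega \leq f(S_1) + e_1$, so $\omega' := \gamma_1 + e_1 - \omega \geq \gamma_1 - f(S_1) > f(S_1) + e_1 \geq \omega$; in particular $\omega' - e_1 > f(S_1)$, so both $\omega'$ and $\omega' - e_1$ lie in $S_1$, giving $\omega' \notin \Ap(S_1)$. By Lemma~\ref{l1} this forces $\Delta_1^S(\omega', 0) \not\subseteq \Ap(S)$; hence Theorem~\ref{proiezione}(5) implies that the column $\Delta_1^S(\omega, 0) \subseteq \Ap(S)$ cannot be infinite, i.e.\ it is a finite line in $\Ap(S)$ (case (e) of the classification), and by Theorem~\ref{proiezione}(4) the reflected column $\Delta_1^S(\omega', 0)$ is infinite, eventually in $\Ap(S)$, with some elements in $\boldsymbol{e}+S$ (case (g)).

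Next, I would invoke Proposition~\ref{simmetria}(2). The identity $M = \gamma_1 - 2b(S_1) = \gamma_2 - 2b(S_2)$, combined with the bound $b(S_1) \leq f(S_1)$, yields $M > e_1$ under the hypothesis; hence $\gamma_2 > 2b(S_2) + e_1 \geq 2b(S_2) + e_2$. This gives a symmetric-type lower bound on $\gamma_2$ that will feed into the contradiction argument.

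The main obstacle, which I expect to require the most care, is concluding $e_2 \geq e_1$. I would argue by contradiction: if $e_1 > e_2$, then by Theorem~\ref{infiniti} there exist middle levels $A_{e_2+1}, \ldots, A_{e_1}$, each containing exactly one infinite vertical line (forced by the first step to be of case (g)) and no infinite horizontal line. Using the self-duality of the middle range provided by Theorem~\ref{duality} and Corollary~\ref{cordual}, together with Lemma~\ref{altro-Ap}(3)-(5), the plan is to construct $e_1$ elements of $\Ap(S_2)$, one associated with each finite column at $\omega \in \Ap(S_1)$, whence $e_2 = |\Ap(S_2)| \geq e_1$, contradicting $e_1 > e_2$. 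The delicate point is to verify that the second coordinates of the elements so produced are pairwise distinct and all lie in $\Ap(S_2)$; this would rest on Lemma~\ref{basic-Ap}(7)-(8) to control the level structure across rows and columns, and on Proposition~\ref{simmetria}(6) applied to the top of each finite column (which has $\Delta_1^S = \emptyset$ by finiteness) to pass from a finite-column top to a row in $\Ap(S_2)$.
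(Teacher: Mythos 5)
Your first two paragraphs are correct but peripheral: the observation that $\omega' = \gamma_1+e_1-\omega > \omega$ for every $\omega\in\Ap(S_1)$, hence every column $\Delta_1^S(\omega,0)$ is finite and the reflected column at $\omega'$ is the infinite one, is exactly the right use of the hypothesis (the paper uses the same inequality, but only for the largest element $\omega_{e_1}=f(S_1)+e_1$), and the bound on the number of absolute elements is true but never needed. The genuine gap is the third step, which is the whole point of the proposition: you only announce a ``plan'' to manufacture $e_1$ pairwise distinct elements of $\Ap(S_2)$ from the finite columns, and you yourself flag the delicate point (distinctness of the second coordinates and membership in $\Ap(S_2)$) without resolving it. None of the cited results (Lemma \ref{basic-Ap}(7)--(8), Lemma \ref{altro-Ap}(3)--(5), Proposition \ref{simmetria}(6), Corollary \ref{cordual}) produces such an injection $\Ap(S_1)\to\Ap(S_2)$ in any evident way; for instance, applying Proposition \ref{simmetria}(6) to the top $\al$ of a finite column only tells you $\Delta_1^S(\g+\e-\al)\neq\emptyset$, which is information about the dual column, not about a row of $\Ap(S_2)$, and nothing controls that tops of different columns sit on different rows. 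So as written the argument does not reach the conclusion $e_1\le e_2$.

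For comparison, the paper closes this step with a short level comparison rather than a counting argument: from the construction in Theorem \ref{sequenza2}, $\al_{e_1}=\min\Delta_1^S(\omega_{e_1},0)$ lies in the level $A_{e_1}$, while by Theorem \ref{duality} (or Corollary \ref{c1} plus Theorem \ref{infiniti}) the infinite vertical line at first coordinate $\omega_{e_1}'=\gamma_1+e_1-\omega_{e_1}$ lies in $A_{e-e_1+1}=A_{e_2+1}$. Your inequality $\omega_{e_1}<\omega_{e_1}'$ then gives $\al_{e_1}\ll\te$ for a suitable $\te$ on that line, and Remark \ref{rem1} forces $e_1<e_2+1$, i.e.\ $e_1\le e_2$, whence $e_1=e_2$. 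If you want to salvage your approach, you would either have to prove the missing injectivity claim in detail or, more economically, convert your ``finite column versus infinite reflected column'' observation into precisely this kind of level comparison, which requires knowing the level of $\min\Delta_1^S(\omega_{e_1},0)$ --- the ingredient your sketch never supplies.
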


\begin{proof}
Set $\Ap(S_1)= \lbrace \omega_1=0, \omega_2, \ldots, \omega_{e_1} \rbrace$ with elements listed in increasing order. In the proof of Theorem \ref{sequenza2} is shown that $ \al_i= \min \, \Delta^S_1(\omega_i, 0) \in A_i $ and moreover by Theorem \ref{duality},
any element $ \te = (\gamma_1 + e_1 - \omega_i, t_2) $, with $t_2 \geq \gamma_2 + e_2$, is in $ A_{e-i+1} $ that is an infinite level by Corollary \ref{c1}. Since by assumption, $$\omega_{e_1}= f(S_1) + e_1 < \gamma_1 + e_1 - (f(S_1) + e_1) = \omega_{e_1}^{\prime},$$ we get $\al_{e_1} \ll \te \in A_{e-e_1+1}$ and therefore $e_1 < e-e_1+1 = e_2 + 1 $, since by Theorem \ref{livelli}, $e= e_1 + e_2.$ It follows that $e_1= e_2.$
\end{proof}

\end{document}